\newtheorem{theorem}{Theorem}[section]
\newtheorem{lemma}[theorem]{Lemma}
\newtheorem{corollary}[theorem]{Corollary}
\newtheorem{proposition}[theorem]{Proposition}
\newtheorem{remark}[theorem]{Remark}
\newcommand{\fqn}{\mathbb{F}_{q^n}}
\newcommand{\cS}{{\mathcal S}}
\newcommand{\F}{{\mathbb F}}
\newcommand{\fq}{{\mathbb F}_{q}}
\newcommand{\la}{\langle}
\newcommand{\ra}{\rangle}
\renewcommand{\mod}{\hbox{{\rm mod}\,}}
\newcommand{\PG}{\mathrm{PG}}
\newcommand{\N}{\mathrm{N}}
\title{Investigating the exceptionality of scattered polynomials}
\author{Daniele Bartoli\thanks{Dipartimento di Matematica e Informatica, Universit\`a degli studi di Perugia,  Perugia, Italy. daniele.bartoli@unipg.it}, 
Giovanni Zini\thanks{Dipartimento di Matematica e Fisica, Universit\`a degli Studi della Campania ``Luigi Vanvitelli'', Caserta, Italy.
giovanni.zini@unicampania.it}, and 
Ferdinando Zullo\thanks{Dipartimento di Matematica e Fisica, Universit\`a degli Studi della Campania ``Luigi Vanvitelli'', Caserta, Italy.
ferdinando.zullo@unicampania.it}
}
\date{ }
\begin{document}

\maketitle
\begin{abstract}
    Scattered polynomials over a finite field $\mathbb{F}_{q^n}$ have been introduced by Sheekey in 2016, and a central open problem regards the classification of those that are \emph{exceptional}. So far, only two families of exceptional scattered polynomials are known.
Very recently, Longobardi and Zanella weakened the property of being scattered by introducing the notion of \emph{L-$q^t$-partially scattered} and \emph{R-$q^t$-partially scattered} polynomials, for $t$ a divisor of $n$. Indeed, a polynomial is scattered if and only if it is both L-$q^t$-partially scattered and R-$q^t$-partially scattered.
    In this paper, by using techniques from algebraic geometry over finite fields and function fields theory, we show that the property which is is the hardest to be preserved  is the L-$q^t$-partially scattered one. On the one hand, we are able to extend the classification results  of exceptional scattered polynomials to exceptional L-$q^t$-partially scattered polynomials. On the other hand, the R-$q^t$-partially scattered property seems  more stable. We present a  large family of R-$q^t$-partially scattered polynomials, containing examples of exceptional R-$q^t$-partially scattered polynomials, which turn out to be connected with linear sets of so-called \emph{pseudoregulus type}.
    In order to detect new examples of polynomials which are R-$q^t$-partially scattered, we introduce two different notions of equivalence preserving this property and concerning natural actions of the groups ${\rm \Gamma L}(2,q^n)$ and ${\rm \Gamma L}(2n/t,q^t)$.
    In particular, our family contains many examples of inequivalent polynomials, and geometric arguments are used to determine the equivalence classes under the action of ${\rm \Gamma L}(2n/t,q^t)$.
\end{abstract}

\noindent {\bf Keywords:} Linearized polynomial, scattered polynomial, exceptionality, equivalence, linear set. 

\noindent {\bf 2020 Mathematics Subject Classification:} 11T06, 51E20, 51E22, 05B25.

\section{Introduction}

Let $q$ be a prime power, $n$ be a positive integer, and $f(x)=\sum_{i=0}^{k} a_i x^{q^i}\in\fqn[x]$ be an $\fq$-linearized polynomial over the finite field $\fqn$.
We also   assume that the $q$-degree $k$ of $f(x)$ is smaller than $n$, so that the identification with the map $x\mapsto f(x)$ defines a one-to-one correspondence between such polynomials and  $\fq$-linear maps over $\fqn$.

An $\fq$-linearized polynomial $f(x)\in\fqn[x]$ is said to be \emph{scattered} of index $\ell\in\{0,\ldots,n-1\}$ over $\fqn$ if, for any $y,z\in\fqn^*$,
\begin{equation}\label{eq:scatt}
\frac{f(y)}{y^{q^\ell}}=\frac{f(z)}{z^{q^\ell}} \Longrightarrow \frac{y}{z}\in \fq;
\end{equation}
see \cite{BZ}.
Scattered polynomials $f(x)\in\fqn[x]$ yield \emph{scattered subspaces} $U_f$ (w.r.t. a Desarguesian spread) in $\fqn\times\fqn$ by defining
\[
U_f=\{(x^{q^\ell},f(x))\colon x\in\fqn\}.
\]
Scattered subspaces of maximum dimension have many applications, such as translation hyperovals \cite{Glynn}, translation caps in affine spaces \cite{BGMP2015}, two-intersection sets \cite{BL2000}, blocking sets \cite{BBL2000}, translation spreads of the Cayley
generalized hexagon \cite{MP2015}, finite semifields \cite{LavrauwPolverino}, coding theory \cite{PZScatt,ZiniZulloScatt}, and graph theory \cite{CK}.

Starting from \cite{BZ}, a much stronger property regarding scattered polynomials, namely their exceptionality, has been defined and deeply investigated.
An $\fq$-linearized polynomial $f(x)\in\fqn[x]$ is said to be \emph{exceptional scattered} of index $\ell\in\{0,\ldots,n-1\}$ if there exist infinitely many $m\in\mathbb{N}$ such that, for any $y,z\in\mathbb{F}_{q^{nm}}$, Condition \eqref{eq:scatt} holds.

While several families of scattered polynomials have been constructed in recent years, only two families of exceptional scattered polynomials are known:
\begin{itemize}
    \item $f(x)=x^{q^s}$ of index $0$, with $\gcd(s,n)=1$ (polynomials of so-called pseudoregulus type);
    \item $f(x)=x+\delta x^{q^{2s}}$ of index $s$, with $\gcd(s,n)=1$ and $\mathrm{N}_{q^n/q}(\delta)\ne1$ (so-called LP polynomials).
\end{itemize}
Several tools have already been proposed in the study of exceptional scattered polynomials, related to certain algebraic curves or Galois extensions of function fields; see \cite{SurveryDani,BZ,BM,FM}.
However, their classification is still unknown when the index is greater than $1$.
In this paper we investigate the exceptional scatteredness of a polynomial by considering separately the exceptionality of two weaker properties defined in \cite{LZ}, namely the \emph{L-$q^t$-partial scatteredness} and the \emph{R-$q^t$-partial scatteredness}.
%In this paper we deal with this problem by considering separately two weaker properties of a polynomial which together force it to be exceptional scattered; namely, the property of being \emph{exceptional L-$q^t$-partially scattered} and the one of being \emph{exceptional R-$q^t$-partially scattered}.

Let $f(x)$ be an $\fq$-linearized polynomial over $\fqn$, $t$ be a divisor of $n$, and $\ell\in\{0,\ldots,n-1\}$.
We say that $f(x)$ is \emph{L-$q^t$-partially scattered of index $\ell$} if for any $y,z\in\fqn^*$,
\begin{equation}\label{eq:condL}
\frac{f(y)}{y^{q^\ell}}=\frac{f(z)}{z^{q^\ell}}\Longrightarrow \frac{y}{z}\in\mathbb{F}_{q^t},
\end{equation}
and that $f(x)$ is \emph{R-$q^t$-partially scattered of index $\ell$} if for any  $y,z\in\fqn^*$,
\begin{equation}\label{eq:condR}
\frac{f(y)}{y^{q^\ell}}=\frac{f(z)}{z^{q^\ell}}\,\mbox{ and }\, \frac{y}{z}\in\mathbb{F}_{q^t}\Longrightarrow \frac{y}{z}\in\fq.
\end{equation}
From now on, whenever the index is not specified, we mean $\ell=0$.

We say that $f(x)$ is \emph{exceptional L-$q^t$-partially scattered of index $\ell$} (resp. \emph{exceptional R-$q^t$-partially scattered of index $\ell$}) if there exist infinitely many $m\in\mathbb{N}$ such that Condition \eqref{eq:condL} (resp. Condition \eqref{eq:condR}) holds for any $y,z\in\mathbb{F}_{q^{nm}}^*$.
Clearly, for any $t$, $f(x)$ is (exceptional) scattered of index $\ell$ if and only if it is (exceptional) both L- and R-$q^t$-partially scattered of index $\ell$.

Some results on, and characterizations of L- and R-$q^t$-partially scattered polynomials have been provided in \cite{LZ}; see Section \ref{sec:preliminaries}.

In this paper, we start the investigation of such properties by those families that contain the known examples of exceptional scattered polynomials, namely the monomials $x^{q^u}$ of index $0$ and the LP polynomials $x+\delta x^{q^{2s}}$ of index $s$.
In this way we obtain exceptional L- and R-$q^t$-partially scattered monomials which are not scattered, and other results for L- and R-$q^t$-partially scattered polynomials of LP type; see Section \ref{sec:first}.

Afterwards, we prove in Section \ref{sec:exceptionality} several necessary conditions for a polynomial to be exceptional L-$q^t$-partially scattered, classifying for instance those of index at most $1$.
This is done by means of tools  already  exploited in the literature in connection with the exceptional scatteredness, such as algebraic curves and function fields over finite fields, also exploiting a method due to G. Micheli in \cite{Micheli1,Micheli2}. Interestingly, such connections can be generalized to exceptional L-$q^t$-partial scatteredness.

Turning to the R- side of $q^t$-partial scatteredness, we detect in Section \ref{sec:example} an explicit large family of R-$q^t$-partially scattered polynomials of the form
\begin{equation}\label{eq:familyintro}
f(x)=\sum_{i=0}^{n/t-1} a_i x^{q^{it+s}} \in \mathbb{F}_{q^{n}}[x],
\end{equation}
which extends previously known examples and whose exact number we are able to count.
As a byproduct, we provide  families of exceptional R-$q^t$-partially scattered binomials of the shape $x^{q^{kt+s}}+\alpha x^{q^s}$.
Such a construction suggests that the existence of exceptional L-$q^t$-partially scattered polynomials is much harder to prove than that of exceptional R-$q^t$-partially scattered ones.

We further investigate the family \eqref{eq:familyintro}  in Section \ref{sec:pseudo} from a geometric point of view: indeed, this family can be alternatively constructed by considering linear sets of pseudoregulus type in the projective space ${\rm PG}(2n/t-1,q^t)$.

The search for new R-$q^t$-partially scattered polynomials naturally requires the investigation of the equivalence issue, in the sense of a suitable group action on the polynomials $f(x)\in\mathbb{F}_{q^{tt^\prime}}[x]$ preserving the desired property.
To this aim, we analyse in Section \ref{sec:equiv} the equivalence defined by a natural action of the group ${\rm \Gamma L}(2,q^{tt^\prime})$ on the elements $(x^{q^\ell},f(x))$. In Section \ref{sec:weaker} we study a weaker equivalence defined by a natural action of the larger group ${\rm \Gamma L}(2t^\prime,q^t)$.
We solve the weak equivalence issue for the family of R-$q^t$-partially scattered polynomials described in Section \ref{sec:example}.
Some open problems in different directions conclude the paper.

\section{Preliminaries}\label{sec:preliminaries}

Let $V$ be an $r$-dimensional $\F_{q^n}$-vector space and let $\cS$ be an $n$-spread of $V$, seen as an $\F_q$-vector space.
An $\F_q$-subspace $U$ of $V$ is called \emph{scattered} w.r.t. $\mathcal{S}$ if $U$ meets every element of $\mathcal{S}$ in an $\F_q$-subspace of dimension at most one; see \cite{BL2000}.
If we consider $V$ as an $rn$-dimensional $\F_q$-vector space, then 
%it is well-known that 
\[ \{\langle \mathbf{v}\rangle_{\fqn} \colon \mathbf{v}\in V\setminus\{\mathbf{0}\}\} \]
is an $n$-spread of $V$, called a  \emph{Desarguesian spread}. 
In this paper we always study scattered $\fq$-subspaces  w.r.t. this  Desarguesian spread,  simply called scattered subspaces. 
For such subspaces Blokhuis and Lavrauw showed that their dimension is bounded above by $rn/2$ and it is now known that when $rn$ is even   there always exist scattered subspaces of this dimension \cite{BBL2000, BGMP2015, BL2000, CSMPZ2016}. 

\begin{comment}
Scattered subspaces attaining the Blokhuis and Lavrauw's bound have a lot of applications, such as translation hyperovals \cite{Glynn}, translation caps in affine spaces \cite{BGMP2015}, two-intersection sets \cite{BL2000}, blocking sets \cite{BBL2000}, translation spreads of the Cayley generalized hexagon \cite{MP2015}, finite semifields \cite{LavrauwPolverino}, coding theory \cite{PZScatt,ZiniZulloScatt} and graph theory \cite{CK}.
\end{comment}

Recently, much focus has been placed on  scattered $\fq$-subspaces of dimension $n$ in $V=\fqn\times \fqn$, especially because of their connections with MRD codes; see \cite{John}. For any $\fq$-subspace $U$ of dimension $n$ in  $\fqn\times \fqn$ and any non-negative integer $\ell<n$ there exist a basis 
%, a non-negative integer $\ell<n$,
and an $\fq$-linearized polynomial over $\fqn$, that is a polynomial $f(x)$ of the form $\sum_{i=0}^{n-1}a_ix^{q^i}\in\fqn[x]$, such that
\[ U=U_f=\{(x^{q^\ell},f(x)) \colon x \in \mathbb{F}_{q^n}\}. \]
When $U_f$ is scattered,  we say that $f(x)$ is a \emph{scattered polynomial} of index $\ell$.
Scattered polynomials were introduced in \cite{John} for $\ell=0$, and in \cite{BZ} for any $\ell$; note that this definition is equivalent to the one in Equation \eqref{eq:scatt}.

In \cite{LZ}, the authors weaken the property of being scattered for a polynomial as in Equations \eqref{eq:condL} and \eqref{eq:condR}.
In the following we resume the results contained in \cite{LZ} which will be useful for our purposes.
For any $\fq$-linearized polynomial $f(x)\in \fqn[x]$ and any $\rho \in \fqn^*$, define 
\[f_{\rho}(x):=f(\rho x)-\rho f(x).\]

\begin{proposition}\cite[Proposition 2.6]{LZ}\label{prop:characterizationpartially}
\begin{enumerate}
    \item An $\fq$-linearized polynomial $f(x)$ is R-$q^t$-partially scattered if and only if the map $f_{\rho}(x)$ is bijective over $\fqn$ for any $\rho \in \F_{q^t}\setminus\fq$.
    \item An $\fq$-linearized polynomial $f(x)$ is L-$q^t$-partially scattered if and only if the map $f_{\rho}(x)$ is bijective over $\fqn$ for any $\rho \in \F_{q^n}\setminus\F_{q^t}$.
    \item An $\fq$-linearized polynomial $f(x)$ is scattered if and only if the map $f_{\rho}(x)$ is bijective over $\fqn$ for any $\rho \in \F_{q^n}\setminus\fq$.
\end{enumerate}
\end{proposition}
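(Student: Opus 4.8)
The plan is to reduce all three statements to a single dictionary between the kernel of the $\fq$-linear map $f_\rho$ and the pairs $(y,z)$ witnessing a failure of the relevant scatteredness property. First I would record that, since $f$ is $\fq$-linearized, so is each $f_\rho$: additivity is immediate, and for $c\in\fq$ one has $f_\rho(cx)=f(c\rho x)-\rho f(cx)=c\,f(\rho x)-c\rho f(x)=c\,f_\rho(x)$. Hence $f_\rho$ is an $\fq$-linear endomorphism of the finite-dimensional space $\fqn$, so it is bijective if and only if it is injective, that is, if and only if $\ker f_\rho=\{0\}$. This lets me work with kernels throughout.

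Next I would establish the key correspondence. For a fixed $\rho\in\fqn^*$, a nonzero $x$ lies in $\ker f_\rho$ exactly when $f(\rho x)=\rho f(x)$; dividing by $\rho x$ gives $f(\rho x)/(\rho x)=f(x)/x$, and setting $y=\rho x$, $z=x$ this reads $f(y)/y=f(z)/z$ with $y/z=\rho$. Conversely, given $y,z\in\fqn^*$ with $f(y)/y=f(z)/z$, put $\rho=y/z$ and $x=z$; then $f(\rho x)=f(y)=(f(y)/y)\,y=(f(z)/z)\,\rho z=\rho f(z)=\rho f(x)$, so $x\in\ker f_\rho\setminus\{0\}$. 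Thus, for each $\rho\in\fqn^*$,
\[ f_\rho \text{ is not bijective} \iff \exists\, y,z\in\fqn^*:\ \frac{f(y)}{y}=\frac{f(z)}{z}\ \text{and}\ \frac{y}{z}=\rho. \]
I would also note the degenerate case $\rho\in\fq$, where $f_\rho\equiv 0$ because $f(\rho x)=\rho f(x)$ for all $x$; this matches the fact that ratios in $\fq$ automatically satisfy $f(y)/y=f(z)/z$ and are therefore excluded in all three definitions.

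With this dictionary, each equivalence follows by reading off which ratios $\rho$ are forbidden. For part (1), $f$ fails to be R-$q^t$-partially scattered exactly when some pair $(y,z)$ has $f(y)/y=f(z)/z$ and $y/z\in\F_{q^t}\setminus\fq$, which by the dictionary happens exactly when $f_\rho$ is non-bijective for some $\rho\in\F_{q^t}\setminus\fq$; the contrapositive is the claim. For part (2), failure of L-$q^t$-partial scatteredness means some pair has $f(y)/y=f(z)/z$ with $y/z\in\fqn\setminus\F_{q^t}$, equivalently $f_\rho$ non-bijective for some such $\rho$. Part (3) is identical with $\fqn\setminus\fq$ in place of these subsets, and also follows by combining (1) and (2) through the decomposition $\fqn\setminus\fq=(\F_{q^t}\setminus\fq)\cup(\fqn\setminus\F_{q^t})$, which gives a useful consistency check with the remark that scattered equals both L- and R-partially scattered.

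The argument is essentially bookkeeping, so the care needed is mostly in the quantifiers: the scatteredness properties are universally quantified over pairs $(y,z)$, while the bijectivity conditions are universally quantified over $\rho$ in a prescribed set, and the dictionary matches these once both directions of the pair/kernel correspondence are verified. The step I expect to demand the most attention is precisely this correspondence — ensuring every admissible ratio $\rho$ is realized by a nonzero kernel vector and conversely — together with the bijective-equals-injective reduction that legitimizes phrasing everything in terms of $\ker f_\rho$.
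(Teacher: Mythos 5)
Your proof is correct: the kernel--ratio dictionary (nonzero $x\in\ker f_\rho$ corresponds exactly to a pair $y=\rho x$, $z=x$ with $f(y)/y=f(z)/z$ and $y/z=\rho$), together with the bijective-iff-injective reduction for $\fq$-linear maps, is precisely the standard argument for this statement. Note that the paper itself gives no proof — it cites \cite[Proposition 2.6]{LZ} — and your argument is essentially the same one used there, so there is nothing further to reconcile.
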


It is possible to translate the property of being R-$q^t$-partially scattered in terms of scattered subspaces.

\begin{theorem}\cite[Theorem 2.3]{LZ}\label{th:scatteredbig}
Let $f(x)\in\fqn[x]$ be an $\mathbb{F}_q$-linearized polynomial and define $U_f=\{(y,f(y)) \colon y \in \F_{q^n}\}$.
The polynomial $f(x)$ is R-$q^t$-partially scattered  %\highlight[id=DAN, comment=abbiamo detto subito dopo l'equazione \eqref{eq:condR} che se non \`e specificato allora intendiamo indice $0$] {\textcolor{blue}{of index $0$}} \textcolor{blue}{(METTERE INDICE ZERO OVUNQUE SERVE)}
if and only if $U_f$ is a scattered $\mathbb{F}_q$-subspace of the $\mathbb{F}_{q^t}$-vector space $\mathbb{F}_{q^n}\times \mathbb{F}_{q^n}$.
\end{theorem}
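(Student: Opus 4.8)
The plan is to unwind both sides of the claimed equivalence into one common algebraic condition on pairs of elements of $\fqn^*$, so that the biconditional becomes a tautology. Since $t\mid n$ we have $\mathbb{F}_{q^t}\subseteq\fqn$, so $V=\fqn\times\fqn$ is naturally an $\mathbb{F}_{q^t}$-vector space, and the relevant Desarguesian spread is $\mathcal{D}=\{\langle(a,b)\rangle_{\mathbb{F}_{q^t}}\colon (a,b)\in V\setminus\{(0,0)\}\}$, whose members are $\fq$-subspaces of dimension $t$. By the definition of scatteredness recalled above, $U_f$ is scattered precisely when $\dim_{\fq}\big(U_f\cap\langle(a,b)\rangle_{\mathbb{F}_{q^t}}\big)\le 1$ for every nonzero $(a,b)$, so I would prove the two implications by contraposition.

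First I would compute this intersection. A point $(\lambda a,\lambda b)$ of the spread line (with $\lambda\in\mathbb{F}_{q^t}$) lies in $U_f$ exactly when $f(\lambda a)=\lambda b$. As $f$ is $\fq$-linear and $\lambda\mapsto\lambda b$ is $\fq$-linear, the set $L_{(a,b)}:=\{\lambda\in\mathbb{F}_{q^t}\colon f(\lambda a)=\lambda b\}$ is an $\fq$-subspace of $\mathbb{F}_{q^t}$, and for $(a,b)\neq(0,0)$ the injective $\fq$-linear map $\lambda\mapsto(\lambda a,\lambda b)$ identifies $L_{(a,b)}$ with $U_f\cap\langle(a,b)\rangle_{\mathbb{F}_{q^t}}$; hence the two share the same $\fq$-dimension. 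A short check disposes of the degenerate case $a=0$: then $(a,b)\neq(0,0)$ forces $b\neq0$, whence $L_{(a,b)}=\{0\}$ contributes nothing. So $U_f$ fails to be scattered if and only if $\dim_{\fq}L_{(a,b)}\ge 2$ for some $(a,b)$ with $a\neq0$.

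Next I would match ``$\dim_{\fq}L_{(a,b)}\ge 2$'' with the failure of Condition \eqref{eq:condR}. If $L_{(a,b)}$ contains two $\fq$-independent elements $\lambda_1,\lambda_2$ (which are necessarily nonzero, with $\lambda_1/\lambda_2\notin\fq$), set $y=\lambda_1 a$ and $z=\lambda_2 a\in\fqn^*$; then $f(y)/y=f(z)/z=b/a$ while $y/z=\lambda_1/\lambda_2\in\mathbb{F}_{q^t}\setminus\fq$, exactly the negation of the R-$q^t$-partially scattered condition. Conversely, given $y,z\in\fqn^*$ with $f(y)/y=f(z)/z$, $y/z\in\mathbb{F}_{q^t}$ and $y/z\notin\fq$, I would take $a=z$ and $b=f(z)$; then $\lambda_1=y/z$ and $\lambda_2=1$ both lie in $L_{(a,b)}$ and are $\fq$-independent, so $\dim_{\fq}L_{(a,b)}\ge 2$. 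Combining these with the reduction of the previous paragraph and taking contrapositives yields the theorem.

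There is no genuinely hard step here; the whole content lies in choosing the parametrization $\lambda\mapsto(\lambda a,\lambda b)$ of the spread line. The only points needing care are verifying that this identification is dimension-preserving, the clean handling of the degenerate line $a=0$, and the elementary fact that two $\fq$-independent elements of $\mathbb{F}_{q^t}$ are nonzero with ratio outside $\fq$. I would finally note that the statement is phrased for index $\ell=0$ to match the chosen form of $U_f$, and that the same parametrization argument adapts to arbitrary index $\ell$.
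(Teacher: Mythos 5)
Your proof is correct: the identification of $U_f\cap\langle(a,b)\rangle_{\mathbb{F}_{q^t}}$ with the $\fq$-subspace $L_{(a,b)}=\{\lambda\in\mathbb{F}_{q^t}\colon f(\lambda a)=\lambda b\}$, the disposal of the lines with $a=0$, and the two contrapositive implications are all sound and complete. Note that the paper itself states this result as a citation of \cite[Theorem 2.3]{LZ} and gives no proof, so there is nothing to compare against; your argument is the natural definitional unwinding one would expect, essentially matching the original source.
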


For any $\mathbf{a}=(a_0,\ldots,a_{t'-1})\in \fqn^{t'}$ define $g_{\mathbf{a}}(x)=\sum_{i=0}^{t'-1} a_i x^{q^{it}}\in\fqn[x]$. It is possible to construct new R-$q^t$-partially scattered polynomials from an already known one.

\begin{proposition}\cite[Proposition 2.10]{LZ}\label{prop:LZ}
For any R-$q^t$-partially scattered $\fq$-linearized polynomial $\varphi(x)$, the polynomial $f(x)=(g_{\mathbf{a}}\circ \varphi)(x)$ is R-$q^t$-partially scattered if and only if $g_{\mathbf{a}}(x)$ is invertible over $\fqn$.
\end{proposition}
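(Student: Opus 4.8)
The plan is to reduce the statement to the bijectivity criterion of Proposition~\ref{prop:characterizationpartially}(1), which characterises the R-$q^t$-partially scattered property of a linearized polynomial $h(x)$ by the bijectivity over $\fqn$ of the maps $h_{\rho}(x)=h(\rho x)-\rho h(x)$ as $\rho$ ranges over $\F_{q^t}\setminus\fq$. The whole argument rests on a single structural feature of $g_{\mathbf a}$. First I would record that $g_{\mathbf a}(x)=\sum_{i=0}^{t'-1}a_i x^{q^{it}}$ is $\F_{q^t}$-linear: since every exponent is a power of $q^t$, for $\rho\in\F_{q^t}$ one has $\rho^{q^{it}}=\rho$, whence $g_{\mathbf a}(\rho x)=\rho\,g_{\mathbf a}(x)$. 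This is the only special property of $g_{\mathbf a}$ that the proof uses.

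Next, for $f=g_{\mathbf a}\circ\varphi$ and $\rho\in\F_{q^t}\setminus\fq$, I would compute $f_{\rho}$ directly. Using the additivity of $g_{\mathbf a}$ together with the identity $\rho\,g_{\mathbf a}(y)=g_{\mathbf a}(\rho y)$ from the previous step, one obtains
\[
f_{\rho}(x)=g_{\mathbf a}(\varphi(\rho x))-\rho\,g_{\mathbf a}(\varphi(x))=g_{\mathbf a}\bigl(\varphi(\rho x)-\rho\,\varphi(x)\bigr)=g_{\mathbf a}(\varphi_{\rho}(x)),
\]
so that $f_{\rho}=g_{\mathbf a}\circ\varphi_{\rho}$ factors through $g_{\mathbf a}$ for every $\rho\in\F_{q^t}$.

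Finally I would conclude by elementary bijectivity bookkeeping on the finite set $\fqn$: a composite $g_{\mathbf a}\circ\varphi_{\rho}$ is bijective if and only if both factors are bijective, since surjectivity of the composite forces surjectivity, hence bijectivity, of $g_{\mathbf a}$, while injectivity of the composite forces injectivity, hence bijectivity, of $\varphi_{\rho}$, and the converse is immediate. Because $\varphi$ is assumed R-$q^t$-partially scattered, $\varphi_{\rho}$ is bijective for every $\rho\in\F_{q^t}\setminus\fq$ by Proposition~\ref{prop:characterizationpartially}(1); therefore $f_{\rho}$ is bijective for every such $\rho$ precisely when $g_{\mathbf a}$ is bijective, that is, invertible over $\fqn$. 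Applying Proposition~\ref{prop:characterizationpartially}(1) once more, now to $f$, yields the claim. There is no serious obstacle in this argument: the one point deserving care is the $\F_{q^t}$-linearity of $g_{\mathbf a}$, which is exactly what allows the scalar $\rho\in\F_{q^t}$ to pass through the outer map and produce the clean factorization $f_{\rho}=g_{\mathbf a}\circ\varphi_{\rho}$; everything else is the linear-algebra fact that over a finite set a composite is bijective if and only if each factor is.
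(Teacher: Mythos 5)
Your proof is correct and complete. Note that the paper itself contains no proof of this statement --- it is quoted with a citation to \cite[Proposition 2.10]{LZ} --- and your argument, reducing everything to Proposition~\ref{prop:characterizationpartially}(1) via the factorization $f_{\rho}=g_{\mathbf a}\circ\varphi_{\rho}$ (which is exactly where the $\F_{q^t}$-linearity of $g_{\mathbf a}$, i.e.\ $\rho^{q^{it}}=\rho$ for $\rho\in\F_{q^t}$, gets used), is the natural route and essentially the proof given in the cited source, so there is no divergence to report. The only point worth flagging is that the ``only if'' direction needs some $\rho\in\F_{q^t}\setminus\fq$ to exist, i.e.\ $t\geq 2$: for $t=1$ the R-$q^t$-partially scattered property is vacuous and the statement itself degenerates, so this is an implicit hypothesis of the proposition rather than a gap in your argument.
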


Consider the non-degenerate symmetric bilinear form of $\F_{q^n}$ over $\F_q$ defined  by
\begin{equation*}%\label{eq:bilform} 
\la x,y\ra= \mathrm{Tr}_{q^n/q}(xy),
\end{equation*}
for every $x,y \in \F_{q^n}$.

\noindent The \emph{adjoint} $\hat{f}$ of the $\fq$-linearized polynomial $\displaystyle f(x)=\sum_{i=0}^{n-1} a_ix^{q^i} \in \F_{q^n}[x]$ with respect to the bilinear form $\la\cdot,\cdot\ra$, i.e. the unique function over $\fqn$ satisfying
\[ \mathrm{Tr}_{q^n/q}(yf(z))=\mathrm{Tr}_{q^n/q}(z\hat{f}(y)) \]
for every $y,z \in \F_{q^n}$, is given by
\begin{equation}\label{eq:adj} \hat{f}(x)=\sum_{i=0}^{n-1} a_i^{q^{n-i}}x^{q^{n-i}}.
\end{equation}

The adjoint operation preserves both the properties of being L-$q^t$-partially and R-$q^t$-partially scattered.

\begin{proposition}\cite[Proposition 2.20]{LZ}\label{prop:adj}
Let $f(x)$ be a $\fq$-linearized polynomial over $\fqn$. Then $f(x)$ is L-$q^t$-partially scattered (resp. R-$q^t$-partially scattered) if and only if $\hat{f}(x)$ is L-$q^t$-partially scattered (resp. R-$q^t$-partially scattered).
\end{proposition}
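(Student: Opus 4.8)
The plan is to reduce the statement to the bijectivity criterion of Proposition \ref{prop:characterizationpartially}, which characterizes both partial scatteredness properties through the invertibility of the auxiliary maps $f_\rho(x)=f(\rho x)-\rho f(x)$ as $\rho$ ranges over a fixed subset $S\subseteq\fqn$: one has $S=\fqn\setminus\F_{q^t}$ in the L-case and $S=\F_{q^t}\setminus\fq$ in the R-case. The crucial point is that this subset $S$ depends only on $q,t,n$ and not on the polynomial, so it is \emph{literally the same} for $f$ and for $\hat f$. Hence it suffices to prove that, for each fixed $\rho\in\fqn^*$, the map $f_\rho$ is bijective over $\fqn$ if and only if the map $(\hat f)_\rho(x)=\hat f(\rho x)-\rho\hat f(x)$ is bijective over $\fqn$.

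To establish this, I would first compute the adjoint of the $\fq$-linear map $f_\rho$ with respect to the form $\langle x,y\rangle=\mathrm{Tr}_{q^n/q}(xy)$. Writing $\mathrm{Tr}_{q^n/q}(y\,f_\rho(z))=\mathrm{Tr}_{q^n/q}(y\,f(\rho z))-\mathrm{Tr}_{q^n/q}((\rho y)\,f(z))$ and applying the defining relation of the adjoint $\hat f$ to each summand (with $\rho z$, respectively $\rho y$, playing the role of the argument of $f$), commutativity of the trace form and $\fq$-linearity give
\[
\mathrm{Tr}_{q^n/q}\bigl(y\,f_\rho(z)\bigr)=\mathrm{Tr}_{q^n/q}\bigl(z\,(\rho\hat f(y)-\hat f(\rho y))\bigr)=-\,\mathrm{Tr}_{q^n/q}\bigl(z\,(\hat f)_\rho(y)\bigr).
\]
By the uniqueness of the adjoint this identifies the adjoint of $f_\rho$ with $-(\hat f)_\rho$; in other words, the two constructions ``pass to $f_\rho$'' and ``pass to $(\hat f)_\rho$'' are interchanged, up to an innocuous sign, by the adjoint operation.

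Finally I would invoke the standard fact that an $\fq$-linear endomorphism of $\fqn$ is bijective if and only if its adjoint is bijective with respect to a non-degenerate form: the kernel of the adjoint of $f_\rho$ is exactly the orthogonal complement of $\mathrm{Im}(f_\rho)$, so surjectivity of $f_\rho$ forces injectivity of its adjoint and conversely, the ambient space being finite-dimensional. Since multiplication by $-1$ is itself a bijection, $f_\rho$ is bijective if and only if $(\hat f)_\rho$ is bijective. Applying this equivalence simultaneously to every $\rho$ in the set $S$ attached to the L- (respectively R-) property, Proposition \ref{prop:characterizationpartially} yields that $f$ is L-$q^t$-partially scattered (respectively R-$q^t$-partially scattered) precisely when $\hat f$ is, as claimed. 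The only step demanding genuine care is the bookkeeping in the adjoint computation above — keeping track of which scalar is absorbed into which argument of the trace form and of the resulting sign — whereas the transfer of bijectivity and the fact that $S$ is shared between $f$ and $\hat f$ are routine.
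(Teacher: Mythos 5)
The paper does not actually prove this proposition: it is quoted verbatim from \cite[Proposition 2.20]{LZ}, so there is no in-paper argument to compare against. Your proof is correct and self-contained, using only ingredients the paper makes available. The adjoint computation is right: from $\mathrm{Tr}_{q^n/q}(y\,f_\rho(z))=\mathrm{Tr}_{q^n/q}\bigl(z\,(\rho\hat f(y)-\hat f(\rho y))\bigr)$ one gets that the adjoint of $f_\rho$ is $-(\hat f)_\rho$, and since the kernel of an adjoint is the orthogonal complement of the image with respect to the non-degenerate trace form, $f_\rho$ is bijective if and only if $(\hat f)_\rho$ is (the sign being harmless, even in characteristic $2$ where it is trivial). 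Combined with the observation that the set of test elements $\rho$ in Proposition \ref{prop:characterizationpartially} ($\F_{q^t}\setminus\fq$ in the R-case, $\fqn\setminus\F_{q^t}$ in the L-case) depends only on $q,t,n$ and is therefore shared by $f$ and $\hat f$, both equivalences follow at once. This is very likely the same mechanism as in \cite{LZ}, since the characterization you invoke is \cite[Proposition 2.6]{LZ}; an alternative route would have been via Theorem \ref{th:scatteredbig} and the fact that $U_{\hat f}$ is the orthogonal complement of $U_f$ under a suitable form on $\fqn\times\fqn$, but your argument is more direct and handles the L-case and R-case uniformly.
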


\section{First examples}\label{sec:first}

In this section we characterize monomials which are L-$q^t$-partially or R-$q^t$-partially scattered and we deal with the well known class of LP-polynomials.

\begin{proposition}\label{prop:mon}
Let $u\geq1$ and $f(x)=x^{q^u} \in \fqn[x]$.
Then
\begin{itemize}
    \item $f(x)$ is L-$q^t$-partially scattered if and only if $\gcd(u,n) \mid t$;
    \item $f(x)$ is R-$q^t$-partially scattered if and only if $\gcd(u,t)=1$;
    \item $f(x)$ is scattered if and only if $\gcd(u,n)=1$.
\end{itemize}
\end{proposition}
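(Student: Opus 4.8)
The plan is to reduce all three statements to elementary subfield containments by first computing, for the monomial $f(x)=x^{q^u}$, the set of ratios $y/z$ that satisfy the antecedent common to \eqref{eq:scatt}, \eqref{eq:condL} and \eqref{eq:condR}. Since $f(y)/y=y^{q^u-1}$, the equation $f(y)/y=f(z)/z$ is equivalent to $(y/z)^{q^u-1}=1$. Writing $w=y/z\in\fqn^*$, this reads $w^{q^u}=w$, i.e.\ $w$ is fixed by the $u$-th power of the Frobenius; intersecting the fixed field with $\fqn$, the solution set is exactly $w\in\F_{q^{\gcd(u,n)}}^*$. Thus, setting $d=\gcd(u,n)$, the ratios $y/z$ for which $f(y)/y=f(z)/z$ are precisely the nonzero elements of $\F_{q^d}$.

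With this ratio set identified, each property becomes a containment statement. For L-$q^t$-partial scatteredness I would read \eqref{eq:condL} as the requirement that every such $w$ lie in $\F_{q^t}$, i.e.\ $\F_{q^d}\subseteq\F_{q^t}$, which holds if and only if $d\mid t$, that is $\gcd(u,n)\mid t$. For scatteredness, \eqref{eq:scatt} demands every such $w$ lie in $\fq$, i.e.\ $\F_{q^d}\subseteq\fq$, equivalent to $d=1$, that is $\gcd(u,n)=1$.

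For R-$q^t$-partial scatteredness the antecedent of \eqref{eq:condR} carries the extra constraint $w\in\F_{q^t}$, so the relevant ratios lie in $\F_{q^d}\cap\F_{q^t}=\F_{q^{\gcd(d,t)}}$, and the condition to impose is $\F_{q^{\gcd(d,t)}}\subseteq\fq$, namely $\gcd(d,t)=1$. The only point requiring a little care is the simplification $\gcd(\gcd(u,n),t)=\gcd(u,t)$: this uses the standing hypothesis that $t$ divides $n$, so that $\gcd(n,t)=t$ and hence $\gcd(u,n,t)=\gcd(u,t)$. This delivers the stated criterion $\gcd(u,t)=1$.

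I do not expect any genuine obstacle here; the whole argument is elementary once the ratio set $\F_{q^d}^*$ is pinned down, and the three conditions differ only in whether one compares $\F_{q^d}$ with $\fq$, with $\F_{q^t}$, or with $\F_{q^d}\cap\F_{q^t}$. As a cross-check, one could equivalently invoke Proposition~\ref{prop:characterizationpartially}: here $f_\rho(x)=f(\rho x)-\rho f(x)=(\rho^{q^u}-\rho)\,x^{q^u}$ is a scalar multiple of the bijection $x\mapsto x^{q^u}$, hence bijective over $\fqn$ exactly when $\rho\notin\F_{q^d}$, and substituting this into the three bijectivity criteria of that proposition reproduces the same three divisibility conditions.
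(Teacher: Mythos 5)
Your proposal is correct and takes essentially the same approach as the paper: the paper's proof computes $f_\rho(x)=(\rho^{q^u}-\rho)x^{q^u}$ and feeds its bijectivity criterion ($\rho\notin\mathbb{F}_{q^u}\cap\mathbb{F}_{q^n}=\mathbb{F}_{q^{\gcd(u,n)}}$) into Proposition~\ref{prop:characterizationpartially}, which is precisely your closing cross-check. Your primary phrasing, pinning down the ratio set $\{y/z:\ f(y)/y=f(z)/z\}=\mathbb{F}_{q^{\gcd(u,n)}}^*$ directly from \eqref{eq:scatt}, \eqref{eq:condL}, \eqref{eq:condR}, is the same elementary computation read off the definitions instead of the bijectivity lemma, and the subfield-containment endgame (including $\gcd(\gcd(u,n),t)=\gcd(u,t)$ via $t\mid n$) matches the paper's.
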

\begin{proof}
%Let $\rho \in \fqn \setminus \fq$. The polynomial $f(x)$ is L-$q^t$-partially scattered if and only if $\frac{f(x)}x=\frac{f(\rho x)}{\rho x}$ implies $\rho \in \F_{q^t}$. The above identity becomes
%\[ (\rho^{q^u-1}-1)x^{q^u-1}=0, \]
%and when $x \ne 0$, we have $\rho \in \F_{q^u}\cap\F_{q^n}=\F_{q^{\gcd(n,u)}}$.
%So, $f(x)$ is L-$q^t$-partially scattered if and only if $\gcd(u,n) \mid t$.
For any $\rho\in\mathbb{F}_{q^n}$, $f_\rho(x)=f(\rho x)-\rho f(x)=(\rho^{q^u}-\rho)x^{q^u}$ is bijective if and only if $\rho^{q^u}-\rho\ne 0$, i.e. $\rho\notin\mathbb{F}_{q^u}\cap\mathbb{F}_{q^n}=\mathbb{F}_{q^{\gcd(u,n)}}$.
So, by Proposition \ref{prop:characterizationpartially}, $f(x)$ is L-$q^t$-partially scattered if and only if $\mathbb{F}_{q^{\gcd(u,n)}}\subseteq\mathbb{F}_{q^t}$, i.e. $\gcd(u,n)\mid t$. Also, $f(x)$ is R-$q^t$-partially scattered if and only if $\mathbb{F}_{q^{\gcd(u,n)}}\cap\mathbb{F}_{q^t}=\mathbb{F}_{q}$, i.e. $\gcd(u,t)=1$. Thus, 
$f(x)$ is scattered if and only if $\gcd(u,n)=1$.
\end{proof}

Therefore, a characterization of exceptional partially scattered monomials is obtained.

\begin{corollary}
Let $u\geq1$ and $f(x)=x^{q^u} \in \fqn[x]$.
\begin{itemize}
    \item $f(x)$ is exceptional L-$q^t$-partially scattered, but not scattered, if and only if $1\ne \gcd(u,n) \mid t$;
    \item $f(x)$ is exceptional R-$q^t$-partially scattered, but not scattered, if and only if $1=\gcd(u,t)<\gcd(u,n)$.
\end{itemize}
\end{corollary}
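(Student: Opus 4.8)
The plan is to reduce everything to Proposition~\ref{prop:mon} applied over the extension fields $\mathbb{F}_{q^{nm}}$, and then to an elementary analysis of how $\gcd(u,nm)$ varies with $m$. First I would observe that, viewing $f(x)=x^{q^u}$ as a polynomial over $\mathbb{F}_{q^{nm}}$ (note $t$ is still a divisor of $nm$, since $t\mid n\mid nm$), Proposition~\ref{prop:mon} applies verbatim with $n$ replaced by $nm$ and gives: $f$ is L-$q^t$-partially scattered over $\mathbb{F}_{q^{nm}}$ if and only if $\gcd(u,nm)\mid t$, and $f$ is R-$q^t$-partially scattered over $\mathbb{F}_{q^{nm}}$ if and only if $\gcd(u,t)=1$. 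By definition of exceptionality, I then only need to decide for which $u,n,t$ the relevant divisibility holds for infinitely many $m$.

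The R-case is immediate. The condition $\gcd(u,t)=1$ does not involve $m$, so it holds for infinitely many $m$ precisely when it holds at all; hence $f$ is exceptional R-$q^t$-partially scattered if and only if $\gcd(u,t)=1$. By Proposition~\ref{prop:mon}, $f$ fails to be scattered exactly when $\gcd(u,n)\neq1$ (and for monomials this coincides with failing to be exceptional scattered). Since $t\mid n$ forces $\gcd(u,t)\mid\gcd(u,n)$, the conjunction ``$\gcd(u,t)=1$ and $\gcd(u,n)\neq1$'' is equivalent to $1=\gcd(u,t)<\gcd(u,n)$, as claimed.

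The L-case is the crux and rests on understanding the range of $g(m):=\gcd(u,nm)$ as $m$ varies. The two key facts I would establish, prime by prime through $p$-adic valuations, are: (i) $\gcd(u,n)\mid g(m)$ for every $m$, because $\gcd(u,n)$ divides both $u$ and $n\mid nm$; and (ii) $g(m)=\gcd(u,n)$ whenever $\gcd(m,u)=1$, since then the valuations of $m$ at the primes dividing $u$ vanish. As there are infinitely many $m$ coprime to $u$, the minimal value $\gcd(u,n)$ of $g$ is attained infinitely often. Combining these: if $g(m)\mid t$ for even a single $m$, then by (i) $\gcd(u,n)\mid g(m)\mid t$; conversely, if $\gcd(u,n)\mid t$, then by (ii) $g(m)=\gcd(u,n)\mid t$ for all the infinitely many $m$ coprime to $u$. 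Therefore $f$ is exceptional L-$q^t$-partially scattered if and only if $\gcd(u,n)\mid t$. Imposing additionally that $f$ not be scattered, i.e.\ $\gcd(u,n)\neq1$, gives exactly $1\neq\gcd(u,n)\mid t$.

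The only genuinely delicate point is this last equivalence, where one must produce \emph{infinitely many} admissible $m$ rather than a single one, and must rule out that some clever choice of $m$ could make $\gcd(u,nm)$ drop below $\gcd(u,n)$. Both are settled by the valuation computation, which pins down $\gcd(u,n)$ as the minimum of $g(m)$ and shows it is achieved on the infinite set of $m$ coprime to $u$. Everything else is routine bookkeeping layered on top of Proposition~\ref{prop:mon}.
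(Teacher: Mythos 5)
Your proposal is correct and follows essentially the same route as the paper, which states this corollary without proof as an immediate consequence of Proposition~\ref{prop:mon} applied over the extensions $\mathbb{F}_{q^{nm}}$. Your valuation argument showing that $\gcd(u,nm)=\gcd(u,n)$ for the infinitely many $m$ coprime to $u$ (and that $\gcd(u,n)\mid\gcd(u,nm)$ always) is exactly the bookkeeping the paper leaves implicit.
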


Let $f(x)=x^{q^{s(n-1)}}+\delta x^{q^s}\in\fqn[x]$, where $s$ satisfies $\gcd(s,n)=1$. The polynomial $f(x)$ is called \emph{LP-polynomial} after Lunardon and Polverino who first introduced it in \cite{LunPol2000}.

After a series of papers, Zanella in \cite{Zanella} characterized those $\delta \in \fqn$ for which $f(x)$ is scattered and hence both L-$q^t$-partially and R-$q^t$-partially scattered.

\begin{theorem}{\rm \cite[Theorem 3.4]{Zanella}}%\label{th:Zanella}
Let $\delta\in\mathbb{F}_{q^n}$, $\gcd(s,n)=1$, $f(x)=x^{q^{s(n-1)}}+\delta x^{q^s}\in\fqn[x]$.
The polynomial $f(x)$ is scattered if and only if $\N_{q^n/q}(\delta)\ne 1$.
\end{theorem}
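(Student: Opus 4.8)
The plan is to apply the criterion of Proposition \ref{prop:characterizationpartially}(3): $f(x)$ is scattered if and only if $f_\rho(x)$ is bijective over $\fqn$ for every $\rho\in\fqn\setminus\fq$. Writing $Q=q^s$ and recalling $\gcd(s,n)=1$ (so that $x\mapsto x^Q$ has order $n$ and $\F_{q^s}\cap\fqn=\fq$), a direct computation gives $f_\rho(x)=(\rho^{Q^{n-1}}-\rho)\,x^{Q^{n-1}}+\delta(\rho^Q-\rho)\,x^Q$. For $\rho\notin\fq$ both coefficients are nonzero, since $\rho^{Q^{n-1}}-\rho$ and $\rho^Q-\rho$ each vanish exactly on $\F_{q^{\gcd(s,n)}}=\fq$. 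The case $\delta=0$ is immediate: then $f(x)=x^{Q^{n-1}}$ is a monomial with $\gcd(s(n-1),n)=1$, hence scattered, consistent with $\N_{q^n/q}(0)=0\neq1$. So I assume $\delta\neq0$, and both coefficients of $f_\rho$ are nonzero.

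First I would turn the injectivity of $f_\rho$ into a single power equation. Substituting $\rho=\mu^Q$ (so $\rho^{Q^{n-1}}=\mu$ and $\rho^Q=\mu^{Q^2}$) and then $x=w^Q$, and using $x^{Q^{n-1}}=w$, the equation $f_\rho(x)=0$ with $x\neq0$ becomes $w^{Q^2-1}=\gamma$, where a short simplification yields $\gamma=1/\bigl(\delta\,(\mu^Q-\mu)^{Q-1}\bigr)$. Setting $a=\mu^Q-\mu$, note that $a\neq0$ precisely when $\mu\notin\fq$, and that as $\mu$ runs over $\fqn\setminus\fq$ the element $a$ runs over all nonzero elements of $\ker\Tr_{q^n/q}$. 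Since $\fqn^*$ is cyclic of order $q^n-1$, the equation $w^{Q^2-1}=\gamma$ has a nonzero solution if and only if $\gamma^{(q^n-1)/(q^d-1)}=\N_{q^n/q^d}(\gamma)=1$, where $d=\gcd(2s,n)$. Thus $f$ fails to be scattered if and only if there is some admissible $a$ with $\N_{q^n/q^d}(\gamma)=1$.

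The last step is to evaluate this norm condition, and here the two cases $d=1$ and $d=2$ behave differently. When $n$ is odd, $d=1$ and $\N_{q^n/q}(\gamma)=\N_{q^n/q}(\delta)^{-1}\,\N_{q^n/q}(a)^{-(Q-1)}=\N_{q^n/q}(\delta)^{-1}$, because $\N_{q^n/q}(a)\in\fq^*$ and every element of $\fq^*$ is killed by the exponent $Q-1=q^s-1$. Hence the condition is $\N_{q^n/q}(\delta)=1$, independently of $\rho$, giving the claim at once. The hardest case is $n$ even, where $d=2$: now $\N_{q^n/q^2}(\gamma)=\N_{q^n/q^2}(\delta)^{-1}\,\N_{q^n/q^2}(a)^{-(q-1)}$ (using that $s$ is odd), and the factor depending on $a$ no longer disappears. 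Applying $\N_{q^2/q}$ and the transitivity $\N_{q^2/q}\circ\N_{q^n/q^2}=\N_{q^n/q}$ shows that a solution can exist only if $\N_{q^n/q}(\delta)=1$, so this condition remains necessary; conversely, to obtain sufficiency one must produce a trace-zero $a$ realizing the required value, i.e. show that $a\mapsto\N_{q^n/q^2}(a)^{q-1}$ maps the nonzero elements of $\ker\Tr_{q^n/q}$ onto the group of $\N_{q^2/q}$-norm-one elements of $\F_{q^2}$. I expect this surjectivity, provable by a counting or character-sum estimate for the restriction of the norm to the hyperplane $\ker\Tr_{q^n/q}$, to be the main obstacle; once it is in place, the even case matches the odd one and the equivalence with $\N_{q^n/q}(\delta)\neq1$ follows.
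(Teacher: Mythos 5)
First, a point of comparison: the paper itself contains no proof of this statement — it is quoted from Zanella's article, where the argument goes through Dickson matrices — so your attempt can only be judged on its own merits. Much of it is sound. Writing $Q=q^s$ as you do, the use of Proposition \ref{prop:characterizationpartially}(3), the computation $f_\rho(x)=(\rho^{Q^{n-1}}-\rho)x^{Q^{n-1}}+\delta(\rho^Q-\rho)x^Q$, the substitutions $\rho=\mu^Q$ and $x=w^Q$, the identification of the values $a=\mu^Q-\mu$ with the nonzero elements of $\ker\Tr_{q^n/q}$, and the solvability criterion $\N_{q^n/q^d}(\gamma)=1$ with $d=\gcd(2s,n)$ are all correct. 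This settles the case $n$ odd completely (your computation there is essentially the one the paper itself runs in Proposition \ref{prop:LP} for a related statement), and for $n$ even it proves one implication: $\N_{q^n/q}(\delta)\neq1$ forces $f(x)$ to be scattered.

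The genuine gap is the converse for $n$ even: if $\N_{q^n/q}(\delta)=1$ you must produce a nonzero $a\in\ker\Tr_{q^n/q}$ with $\N_{q^n/q^2}(a)^{q-1}=\N_{q^n/q^2}(\delta)^{-1}$, and you only assert this surjectivity as expected. This is not a finishing touch that can be waved through. As $\delta$ runs over the norm-one elements of $\fqn^*$, the targets $\N_{q^n/q^2}(\delta)^{-1}$ run over the \emph{entire} group of $(q+1)$-st roots of unity in $\mathbb{F}_{q^2}^*$ (the image of the norm-one group under $\N_{q^n/q^2}$ is the full subgroup of order $q+1$), so your surjectivity claim is exactly equivalent to the even-degree half of Zanella's theorem: the reduction has not made the problem any smaller. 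Moreover, the counting argument you gesture at does not obviously close. The expected number of trace-zero elements in a coset of the index-$(q+1)$ subgroup $\{x\in\fqn^*\colon x^{(q^n-1)/(q+1)}=1\}$ is $\frac{q^n-1}{q(q+1)}\sim q^{n-2}$, while the Gauss-sum error term is of order $q^{n/2}$; at $n=4$, the first nontrivial even case, main term and error term have the same order $q^2$, so the plain Weil bound is inconclusive precisely where it is needed (one would need exact evaluations of the relevant Gauss sums, which are of semiprimitive type, or a different idea altogether). As a further sign that such statements are delicate, the analogous claim fails for $n=2$: there $\ker\Tr_{q^2/q}\setminus\{0\}$ is a single coset of $\fq^*$, on which $a\mapsto a^{q-1}$ is constant. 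In short, the even case — historically the hard part of this theorem — remains unproved in your attempt.
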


%As a consequence, we are able to give necessary and sufficient condition on $f(x)$ to be L-$q^t$-partially scattered.

%\begin{corollary}\label{cor:LPL}
%Let $n=tt'$ for $t,t' \in \mathbb{N}$ and  let $\delta\in\mathbb{F}_{q^n}$, $\gcd(s,n)=1$, $f(x)=x^{q^{s(n-1)}}+\delta x^{q^s}$ in $\fqn[x]$.
%The polynomial $f(x)$ is L-$q^t$-partially scattered if and only if $\N_{q^n/q^t}(\delta)\ne 1$.
%In particular, $f(x)$ is L-$q^t$-partially scattered but not scattered (that is, not R-$q^t$-partially scattered) if and only if $\N_{q^n/q}(\delta)= 1$ and $\N_{q^n/q^t}(\delta)\ne 1$.
%\end{corollary}

The proof of the next proposition follows the ones of \cite[Lemma 4.4]{LMPT2015} and \cite[Proposition 7.4]{BMZZ}.

\begin{proposition}\label{prop:LP}
Let $n=tt'$ with $t,t' \in \mathbb{N}$, $\delta\in\mathbb{F}_{q^n}$, $\gcd(s,n)=1$, $f(x)=x^{q^{s(n-1)}}+\delta x^{q^s}\in\fqn[x]$.
Assume that $n$ is odd and $\N_{q^n/q}(\delta)=1$.
Then $f(x)$ is neither L-$q^t$-partially scattered nor R-$q^t$-partially scattered.
\end{proposition}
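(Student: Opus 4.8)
The plan is to reduce everything to the bijectivity criterion of Proposition \ref{prop:characterizationpartially}. Since the index is $\ell=0$ by default, that proposition applies directly: to show $f$ is \emph{neither} L- nor R-$q^t$-partially scattered, it suffices to produce one $\rho\in\F_{q^t}\setminus\fq$ and one $\rho\in\fqn\setminus\F_{q^t}$ for which $f_\rho(x)=f(\rho x)-\rho f(x)$ fails to be bijective. I would in fact establish the stronger claim that $f_\rho$ is non-bijective for \emph{every} $\rho\in\fqn\setminus\fq$; both required elements then exist as soon as $1<t<n$, which is exactly the nondegenerate case of the statement.

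The first step is to rewrite $f$ and $f_\rho$ through the Frobenius generator. Because $\gcd(s,n)=1$, the map $\sigma\colon x\mapsto x^{q^s}$ generates $\mathrm{Gal}(\fqn/\fq)$ and has order $n$, so $x^{q^{s(n-1)}}=\sigma^{-1}(x)$ and $f(x)=\sigma^{-1}(x)+\delta\,\sigma(x)$. Expanding then gives $f_\rho(x)=A\,\sigma^{-1}(x)+B\,\sigma(x)$ with $A=\sigma^{-1}(\rho)-\rho$ and $B=\delta\bigl(\sigma(\rho)-\rho\bigr)$. For $\rho\notin\fq$ one has $\sigma(\rho)\neq\rho$, so $A,B\neq0$ (using $\delta\neq0$, which follows from $\N_{q^n/q}(\delta)=1$).

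The core step is the kernel computation. Setting $f_\rho(x)=0$ and applying $\sigma$ turns the equation into $\sigma^2(x)=c\,x$, i.e.\ $x^{q^{2s}}=c\,x$, where $c=-\sigma(A)/\sigma(B)$. Here is precisely where the oddness of $n$ enters: together with $\gcd(s,n)=1$ it gives $\gcd(2s,n)=1$, so $x\mapsto x^{q^{2s}}$ is again a Frobenius generator and $x^{q^{2s}-1}=c$ has a nonzero solution (indeed $q-1$ of them) exactly when $\N_{q^n/q}(c)=1$. I would then evaluate $\N_{q^n/q}(c)$: writing $A=-\sigma^{-1}\bigl(\sigma(\rho)-\rho\bigr)$ and using the $\sigma$-invariance of the norm, the common factor $\N_{q^n/q}\bigl(\sigma(\rho)-\rho\bigr)$, nonzero since $\rho\notin\fq$, cancels, leaving $\N_{q^n/q}(c)=\N_{q^n/q}(-1)^2/\N_{q^n/q}(\delta)=1/\N_{q^n/q}(\delta)$. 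By hypothesis this equals $1$, so $\ker f_\rho\neq\{0\}$ for every $\rho\notin\fq$.

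Concluding is then immediate: choosing $\rho\in\F_{q^t}\setminus\fq$ and invoking Proposition \ref{prop:characterizationpartially}(1) shows $f$ is not R-$q^t$-partially scattered, while choosing $\rho\in\fqn\setminus\F_{q^t}$ and invoking Proposition \ref{prop:characterizationpartially}(2) shows $f$ is not L-$q^t$-partially scattered. I expect the main obstacle to be the kernel reduction together with the norm-solvability bookkeeping: one must verify that $\gcd(2s,n)=1$ (the sole place oddness of $n$ is used), that $A$ and $B$ are genuinely nonzero so the division defining $c$ is legitimate, and that the $\rho$-dependent factor really cancels so that $\N_{q^n/q}(c)$ becomes independent of $\rho$ — the identity $\N_{q^n/q}(-1)^2=1$ being what neutralizes any sign ambiguity arising from the parity of $q$.
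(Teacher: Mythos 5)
Your proposal is correct and follows essentially the same route as the paper: the paper's proof also reduces the question to solving $x^{q^{2s}-1}=\frac{1}{\delta^{q^s}(\rho-\rho^{q^s})^{q^s-1}}$ --- which is exactly your kernel equation $f_\rho(x)=0$ after applying Frobenius --- and uses $n$ odd (so $\gcd(2s,n)=1$) together with $\N_{q^n/q}(\delta)=1$ to produce a nonzero solution for any chosen $\rho$. The only cosmetic difference is that you phrase the argument via the bijectivity criterion of Proposition \ref{prop:characterizationpartially}, while the paper works directly with the defining ratio condition $\frac{f(x)}{x}=\frac{f(\rho x)}{\rho x}$; the underlying computation is identical.
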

\begin{proof}
Let $\rho \in \F_{q^t}\setminus \fq$. 
The identity $\frac{f(x)}x=\frac{f(\rho x)}{\rho x}$ reads
\[ \rho (x^{q^{s(n-1)}}+\delta x^{q^s})= (\rho x)^{q^{s(n-1)}}+\delta (\rho x)^{q^s}, \]
that is 
\begin{equation}\label{eq:LPcond} x^{q^{2s}-1}=\frac{1}{\delta^{q^s}(\rho-\rho^{q^s})^{q^s-1}}. \end{equation}
Since $n$ is odd and $\N_{q^n/q}(\delta)=1$, there exists $x_0\in \fqn^*$ satisfying Equation \eqref{eq:LPcond}. So, $f(x)$ is not R-$q^t$-partially scattered since $\rho \notin \fq$.
Similar arguments show that $f(x)$ is not L-$q^t$-partially scattered.
\end{proof}

\section{Exceptional L-\texorpdfstring{$q^t$}{Lg}-partially scattered polynomials}\label{sec:exceptionality}

Inspired by the results in \cite{BM,BZ}, we provide an inequality involving the parameters of an L-$q^t$-partially scattered polynomial. As a byproduct, we obtain a non-existence result for exceptional L-$q^t$-partially scattered polynomials.

\begin{remark}
In the study of L-$q^t$-partially scattered polynomials of index $\ell$, we can assume that $f(x)$ is $\ell$-normalized, in the following sense (see \cite[p. 511]{BZ}):
\begin{itemize}
    \item[(i)] the  $q$-degree $k$ of $f(x)$ is smaller than $n$;
    \item[(ii)] $f(x)$ is monic;
    \item[(iii)] the coefficient $a_t$ of $x^{q^t}$ in $f(x)$ is zero;
    \item[(iv)] if $\ell>0$, then the coefficient of $x$ in $f(x)$ is nonzero, i.e. $f(x)$ is separable.
\end{itemize}
In fact, $f(x)$ is L-$q^t$-partially scattered of index $\ell$ if and only if the monic polynomial $\frac{1}{a_k}f(x)$ is L-$q^t$-partially scattered of index $\ell$.
Also, $\frac{f(x)}{x^{q^\ell}}$ equals $\frac{f(x)-a_\ell x^{q^\ell}}{x^{q^\ell}}+a_\ell$, so that $f(x)$ is L-$q^t$-partially scattered of index $\ell$ if and only if $f(x)-a_\ell x^{q^\ell}$ is L-$q^t$-partially scattered of index $\ell$.
Finally, if $\ell >0$ and $q^v$ is the smallest degree of a monomial in $f(x)$, then $\frac{f(x)}{x^{q^\ell}}=\left(\frac{\sum a_i^{q^{n-v}}x^{q^{i-v}}}{x^{q^{\ell-v}}}\right)^{q^v}$. Therefore, $f(x)$ is L-$q^t$-partially scattered of index $\ell$ if and only if $\sum {a_i}^{q^{n-v}}x^{q^{i-v}}$ is L-$q^t$-partially scattered of index $\ell-v$.
\end{remark}

The following useful lemma generalizes \cite[Lemma 2.1]{BZ} and is obtained in a similar way. 

\begin{lemma}\label{lemma:importante}
Let $\mathcal{C}$ be the plane curve with affine equation
\begin{equation}\label{eq:curve}
\mathcal{C}\colon\quad \frac{f(X)Y^{q^\ell}-f(Y)X^{q^\ell}}{X^q Y - X Y^q}=0.
\end{equation}
The polynomial $f(x)$ is L-$q^t$-partially scattered of index $\ell$ if and only if every affine $\fqn$-rational point $(\bar{x},\bar{y})\in\mathcal{C}$ with $\bar{x},\bar{y}\ne0$ satisfies $\bar{y}/\bar{x}\in\mathbb{F}_{q^t}$.
\end{lemma}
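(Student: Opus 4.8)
The plan is to follow the proof of \cite[Lemma 2.1]{BZ}, the only new ingredient being that the conclusion $\bar y/\bar x\in\fq$ of the scattered case is relaxed to $\bar y/\bar x\in\mathbb{F}_{q^t}$; the denominator $X^qY-XY^q$ will again serve to delete the ``trivial'' solutions of $f(X)/X^{q^\ell}=f(Y)/Y^{q^\ell}$, namely those of ratio in $\fq$. First I would verify that $\mathcal{C}$ is genuinely a plane curve, i.e.\ that $X^qY-XY^q$ divides the numerator $N(X,Y):=f(X)Y^{q^\ell}-f(Y)X^{q^\ell}$ in $\fqn[X,Y]$. For this, write $X^qY-XY^q=XY\prod_{\alpha\in\fq^*}(X-\alpha Y)$ as a product of $q+1$ pairwise non-proportional linear forms, so that it suffices to check that each factor divides $N$. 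Since $f(0)=0$ one has $N(X,0)=N(0,Y)=0$, giving the factors $X$ and $Y$; and for $\alpha\in\fq^*$ the $\fq$-linearity of $f$ yields $f(\alpha Y)=\alpha f(Y)$ together with $\alpha^{q^\ell}=\alpha$, whence $N(\alpha Y,Y)=\alpha f(Y)Y^{q^\ell}-f(Y)\alpha^{q^\ell}Y^{q^\ell}=0$, giving the factor $X-\alpha Y$. Thus the quotient $G(X,Y):=N(X,Y)/(X^qY-XY^q)$ is a polynomial and $\mathcal{C}$ is the curve $G=0$.

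Next I would record two elementary ``dictionary'' facts, valid for $\bar x,\bar y\in\fqn^*$: the numerator $N(\bar x,\bar y)$ vanishes precisely when $f(\bar x)/\bar x^{q^\ell}=f(\bar y)/\bar y^{q^\ell}$, and the denominator $\bar x^q\bar y-\bar x\bar y^q=\bar x\bar y(\bar x^{q-1}-\bar y^{q-1})$ vanishes precisely when $(\bar x/\bar y)^{q-1}=1$, i.e.\ when $\bar x/\bar y\in\fq\subseteq\mathbb{F}_{q^t}$. The equivalence then follows by a case analysis on whether the denominator vanishes. For the forward implication, assume $f$ is L-$q^t$-partially scattered of index $\ell$ and let $(\bar x,\bar y)\in\mathcal{C}(\fqn)$ with $\bar x,\bar y\ne0$: if the denominator vanishes then $\bar y/\bar x\in\fq\subseteq\mathbb{F}_{q^t}$ and we are done, while otherwise $G(\bar x,\bar y)=0$ forces $N(\bar x,\bar y)=0$, so $f(\bar x)/\bar x^{q^\ell}=f(\bar y)/\bar y^{q^\ell}$, and \eqref{eq:condL} gives $\bar y/\bar x\in\mathbb{F}_{q^t}$. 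Conversely, assume the geometric condition and take $y,z\in\fqn^*$ with $f(y)/y^{q^\ell}=f(z)/z^{q^\ell}$, i.e.\ $N(y,z)=0$: if $y/z\in\fq$ then already $y/z\in\mathbb{F}_{q^t}$, and otherwise $y^qz-yz^q\ne0$, so $G(y,z)=0$ and $(y,z)$ is an affine $\fqn$-rational point of $\mathcal{C}$ with nonzero coordinates, whence by hypothesis $z/y\in\mathbb{F}_{q^t}$ and thus $y/z\in\mathbb{F}_{q^t}$.

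The only point requiring care, and the sole place where the argument differs from the scattered case of \cite[Lemma 2.1]{BZ}, is the bookkeeping of the points where the denominator vanishes: there the equation $G=0$ carries no information, but such points automatically have ratio in $\fq\subseteq\mathbb{F}_{q^t}$, so they obstruct neither implication. Beyond this, I expect no genuine difficulty, since the divisibility of $N$ by $X^qY-XY^q$ is the only non-formal ingredient and it is entirely analogous to the linearized-polynomial identity already exploited in \cite{BZ}.
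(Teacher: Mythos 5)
Your proof is correct, and it is exactly the argument the paper has in mind: the paper gives no explicit proof but states that the lemma ``generalizes \cite[Lemma 2.1]{BZ} and is obtained in a similar way,'' which is precisely your route (divisibility of $f(X)Y^{q^\ell}-f(Y)X^{q^\ell}$ by $X^qY-XY^q$ via the $\mathbb{F}_q$-linearity of $f$, then the case analysis on whether the denominator vanishes, with $\mathbb{F}_q$ relaxed to $\mathbb{F}_{q^t}$ in the conclusion). Nothing further is needed.
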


Now we can prove a bound on the parameters of L-$q^t$-partially scattered polynomials.

\begin{theorem}\label{th:inequality}
Let $f(x)=\sum_{j=0}^{k}a_j x^{q^j}\in\fqn[x]$ be a non-monomial $\ell$-normalized L-$q^t$-partially scattered polynomial, and let $v=\min\{j\colon a_j\ne0\}$.
%Let $h=\dim_{\fq}\ker(f)$.
%For any $i\geq 1$, let $N_i$ be the number of points having weight $i$ in $L_{f,t}$.

If $\ell=0$, then
\[
q^n - (q^k-q-1)(q^k-q-2)\sqrt{q^n} - q^t(q^k-q)- 2(q^{k-v}-1)\leq 0 .
\]
If one of the following two conditions holds:
\begin{itemize}
    \item[(i)] $\ell=1$ and $k\geq 3$;
    \item[(ii)] $\ell\geq2$, with $k \geq 3\ell$ if  $\ell \mid k$ or $k \geq 2\ell-1$ if $\ell \nmid k$, and  either
    \begin{itemize}
        \item $f(x)=a_0 x+a_1 x^q+\sum_{j>\ell}a_j x^{q^j}$ with $k \geq \ell+2$, or 
        \item $f(x)=a_0 x+\sum_{j>\ell}a_j x^{q^j}$;
\end{itemize}
\end{itemize}
then
\[
q^n - (q^k+q^\ell-q-2)(q^k+q^\ell-q-3)\sqrt{q^n} - q^t(q^k+q^\ell-q-1) \leq 0.
\]
\end{theorem}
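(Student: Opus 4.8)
The plan is to transfer the statement to the plane curve $\mathcal{C}$ of Lemma \ref{lemma:importante} and then to balance an upper bound on its $\fqn$-rational points, coming from the scatteredness hypothesis, against the Hasse--Weil lower bound for one of its absolutely irreducible components. First I would compute $d:=\deg\mathcal{C}$. The numerator $f(X)Y^{q^\ell}-f(Y)X^{q^\ell}$ has total degree $q^k+q^\ell$, and after cancelling the common factor $X^qY-XY^q=XY(X^{q-1}-Y^{q-1})$ of degree $q+1$ (a short check, using that the numerator vanishes on each line $Y=\lambda X$ with $\lambda\in\fq$ and on the axes, shows the division is exact) one finds $d=q^k+q^\ell-q-1$ when $\ell\geq1$, whereas for $\ell=0$ the term of $q$-degree $k$ supplies the leading form and $d=q^k-q$. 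By Lemma \ref{lemma:importante}, the L-$q^t$-partially scattered property forces every affine $\fqn$-rational point $(\bar x,\bar y)$ of $\mathcal{C}$ with $\bar x\bar y\neq0$ to satisfy $\bar y/\bar x\in\mathbb{F}_{q^t}$, that is, to lie on the union $\mathcal{L}$ of the $q^t-1$ lines $Y=\lambda X$, $\lambda\in\mathbb{F}_{q^t}^*$, through the origin.

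Assume for the moment that $\mathcal{C}$ has an absolutely irreducible component $\mathcal{D}$, of degree $m\leq d$, defined over $\fqn$ and distinct from every line of $\mathcal{L}$. I would bound $\#\mathcal{D}(\fqn)$ from above by splitting its points into three groups: affine points with $\bar x\bar y\neq0$, which by the previous paragraph lie on $\mathcal{L}$ and hence number at most $(q^t-1)m$, since $\mathcal{D}$ meets each of the $q^t-1$ lines in at most $m$ points by B\'ezout; points at infinity, at most $m$; and points on the axes $X=0$, $Y=0$. For $\ell\geq1$ the separability from normalization (iv) shows, after removing the factors $X$ and $Y$, that $\mathcal{C}$ does not meet the punctured axes, so $\#\mathcal{D}(\fqn)\leq q^tm\leq q^td$. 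For $\ell=0$ the axis points are the nonzero $\fqn$-roots of a linearized polynomial whose separable part has $q$-degree $k-v$, giving at most $q^{k-v}-1$ points on each axis and the extra term $2(q^{k-v}-1)$. On the other hand, the Hasse--Weil bound for absolutely irreducible (possibly singular) plane curves gives $\#\mathcal{D}(\fqn)\geq q^n+1-(m-1)(m-2)\sqrt{q^n}$. Since both $(m-1)(m-2)$ and $q^tm$ increase with $m$, replacing $m$ by $d$ only weakens the combined inequality; comparing the two estimates therefore yields exactly the two displayed bounds.

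The substance of the proof lies in justifying the assumption of the previous paragraph: that $\mathcal{C}$ really possesses an absolutely irreducible component defined over $\fqn$ and not contained in $\mathcal{L}$. This is a statement about the curve attached to $f$ and does not use scatteredness; it is exactly where the hypotheses (i) and (ii) enter. Following \cite{BM,BZ}, I would study the homogeneous forms of $\mathcal{C}$ of highest and lowest degree, controlling the tangent cone at the relevant singular points and the behaviour at infinity, and then apply the available absolute-irreducibility criteria --- supplemented, as announced in the Introduction, by Micheli's method --- to exhibit a component fixed by the $\fqn$-Frobenius. The numerical constraints ($k\geq3\ell$ or $k\geq2\ell-1$ according to whether $\ell\mid k$, together with $k\geq\ell+2$ in the first shape of (ii)) and the distinction between $f(x)=a_0x+a_1x^q+\sum_{j>\ell}a_jx^{q^j}$ and $f(x)=a_0x+\sum_{j>\ell}a_jx^{q^j}$ are precisely what make this structural analysis go through, ruling out the degenerate situations in which all components are genuinely conjugate over $\fqn$ or collapse into lines of $\mathcal{L}$. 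I expect this irreducibility and field-of-definition step to be the main obstacle, the point count of the second paragraph being routine once such a component is in hand.
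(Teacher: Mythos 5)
Your proposal is correct and follows essentially the same route as the paper: the paper likewise invokes \cite{BM,BZ} for the existence of an absolutely irreducible component of $\mathcal{C}$ defined over $\fqn$ (this citation is precisely where hypotheses (i) and (ii) enter), applies the Hasse--Weil lower bound to that component, and plays it against the upper bound coming from Lemma \ref{lemma:importante} by counting points on the line at infinity, on the axes (zero for $\ell>0$ by separability, at most $2(q^{k-v}-1)$ for $\ell=0$), and on the $q^t-1$ lines $Y=\lambda X$ with $\lambda\in\mathbb{F}_{q^t}^*$. Your additional care in applying B\'ezout to the irreducible component itself, and in requiring that this component not be one of the lines $Y=\lambda X$, is a mild sharpening of the same argument (the paper bounds $|\mathcal{C}\cap r_\lambda|$ directly), so the two proofs coincide in substance.
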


\begin{proof}
As proved in \cite{BM,BZ} (see in particular \cite[Proposition 3.6 and Section 4]{BM} and \cite[Theorems 3.1 and 3.5]{BZ}), the curve $\mathcal{C}$ defined as in \eqref{eq:curve} contains an absolutely irreducible component $\mathcal{X}$ defined over $\fqn$.
By the Hasse-Weil bound,
the number $N_{q^n}$ of $\fqn$-rational points of $\mathcal{X}$ satisfies
\[
N_{q^n}\geq q^n+1 - (q^k+q^\ell-q-2)(q^k+q^\ell-q-3)\sqrt{q^n}.
\]
Since $f(x)$ is L-$q^t$-partially scattered of index $\ell$, Lemma \ref{lemma:importante} implies that the $\fqn$-rational points of $\mathcal{C}$ lie either on the line at infinity $r_\infty$, or on the line $r_X:X=0$, or on the line $r_Y:Y=0$, or on the line $r_{\lambda}:Y=\lambda X$ for some $\lambda\in\mathbb{F}_{q^t}^*$.

As shown in the proof of \cite[Theorem 3.3]{BMZZ}, the origin $(0,0)$ is a point of $\mathcal{C}$ if and only if either $\ell=0$ and $v>1$, or $\ell>1$.
Also, the number of affine $\fqn$-rational points of $((\mathcal{C}\cap r_X)\cup(\mathcal{C}\cap r_Y))\setminus\{(0,0)\}$ is zero if $\ell>0$ and at most $2(q^{k-v}-1)$ if $\ell=0$.
Finally, for every $\lambda\in\mathbb{F}_{q^t}^*$, we have $|\mathcal{C}\cap r_{\lambda}|\leq q^k+q^\ell-q-1$, as well as $|\mathcal{C}\cap r_{\infty}|\leq q^k+q^\ell-q-1$.

Therefore,
if $\ell=0$ then
\[
q^n+1 - (q^k-q-1)(q^k-q-2)\sqrt{q^n} \leq q^t(q^k-q)+ 2(q^{k-v}-1)+\chi_{v>1},
\]
where $\chi_{v>1}$ is $1$ if $v>1$ and $0$ otherwise.
If $\ell>0$ then
\[
q^n+1 - (q^k+q^\ell-q-2)(q^k+q^\ell-q-3)\sqrt{q^n} \leq q^t(q^k+q^\ell-q-1)+\chi_{v>1}.
\]
The claim follows.
\end{proof}

By direct computation, Theorem \ref{th:inequality} has the following consequence.

\begin{corollary}\label{cor:nuncestanno}
Let $f(x)=\sum_{j=0}^{k}a_j x^{q^j}\in\fqn[x]$ be a non-monomial $\ell$-normalized L-$q^t$-partially scattered polynomial.
Suppose that one of the following conditions holds:
\begin{itemize}
    \item $\ell=0$;
    \item $\ell=1$ and $k\geq 3$;
    \item $\ell\geq2$, with $k \geq 3\ell$ if  $\ell \mid k$ or $k \geq 2\ell-1$ if $\ell \nmid k$, and  either
    \begin{itemize}
        \item $f(x)=a_0 x+a_1 x^q+\sum_{j>\ell}a_j x^{q^j}$ with $k \geq \ell+2$, or 
        \item $f(x)=a_0 x+\sum_{j>\ell}a_j x^{q^j}$.
    \end{itemize}
\end{itemize}
%If $f(x)$ is L-$q^t$-partially scattered, then
Then
\[
\frac{n}{2}\leq \max\left\{2k,2\ell,\frac{k+t}{2},\frac{\ell+t}{2}\right\}.
\]
In particular, $f(x)$ is not exceptional L-$q^t$-partially scattered.
\end{corollary}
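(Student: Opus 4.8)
The plan is to read each of the two inequalities supplied by Theorem~\ref{th:inequality} as a quadratic inequality in the single unknown $X=\sqrt{q^n}=q^{n/2}$, and then to extract an upper bound on $n/2$ by taking $q$-logarithms. One writes the bound in the form $X^2-AX-B\le 0$ with $A,B>0$: for $\ell=0$ one has $A=(q^k-q-1)(q^k-q-2)$ and $B=q^t(q^k-q)+2(q^{k-v}-1)$, while for $\ell>0$ one has $A=(q^k+q^\ell-q-2)(q^k+q^\ell-q-3)$ and $B=q^t(q^k+q^\ell-q-1)$. Since $X>0$ and the leading coefficient is positive, the inequality forces $X$ to lie below the larger root, so that
\[
q^{n/2}\;\le\;\frac{A+\sqrt{A^2+4B}}{2}\;\le\;A+\sqrt{B},
\]
using the subadditivity $\sqrt{A^2+4B}\le A+2\sqrt B$.

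The next step is to estimate $A$ and $\sqrt B$. Writing $d=\max\{k,\ell\}$, each factor of $A$ is at most a constant multiple of $q^{d}$, so $A$ is of the order of $q^{2d}$; likewise $B$ is of the order of $q^{\,t+d}$, whence $\sqrt B$ is of the order of $q^{(t+d)/2}$. Feeding these into the displayed inequality and passing to $q$-logarithms yields
\[
\frac{n}{2}\;\le\;\max\Big\{2d,\ \tfrac{t+d}{2}\Big\}+O(1)\;=\;\max\Big\{2k,\,2\ell,\,\tfrac{k+t}{2},\,\tfrac{\ell+t}{2}\Big\}+O(1),
\]
using $2d=\max\{2k,2\ell\}$ and $(t+d)/2=\max\{(t+k)/2,(t+\ell)/2\}$. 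The content of the phrase \emph{by direct computation} is precisely the careful bookkeeping that removes the additive $O(1)$: one exploits the explicit corrective constants appearing inside the factors of $A$ and inside $B$, together with the integrality of the exponents $n,k,\ell,t$, to absorb the spurious constant factors and land exactly on the stated maximum. I expect this constant-tracking, together with the separate handling of the extra summand $2(q^{k-v}-1)$ in the $\ell=0$ case, to be the only genuinely fiddly part of the argument; the rest is the routine passage through the quadratic formula.

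Finally, to deduce non-exceptionality I would argue by contradiction. If $f(x)$ were exceptional L-$q^t$-partially scattered of index $\ell$, then Condition~\eqref{eq:condL} would hold over $\mathbb{F}_{q^{nm}}$ for infinitely many $m$. For each such $m$ the polynomial $f(x)$, viewed over $\mathbb{F}_{q^{nm}}$, is still non-monomial and $\ell$-normalized with the \emph{same} parameters $k,\ell$, and $t$ still divides $nm$ (since $t\mid n\mid nm$); hence the hypotheses (i) or (ii) of Theorem~\ref{th:inequality} persist unchanged, as they constrain only $k$ and $\ell$. Applying the bound just obtained with $n$ replaced by $nm$ gives
\[
\frac{nm}{2}\;\le\;\max\Big\{2k,\,2\ell,\,\tfrac{k+t}{2},\,\tfrac{\ell+t}{2}\Big\}
\]
for infinitely many $m$. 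The right-hand side is a fixed constant while the left-hand side tends to infinity with $m$, a contradiction. Therefore $f(x)$ cannot be exceptional L-$q^t$-partially scattered, which is the assertion to be proved.
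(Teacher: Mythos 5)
Your overall route is the same one the paper intends: Corollary~\ref{cor:nuncestanno} is extracted from the two inequalities of Theorem~\ref{th:inequality} (the paper itself only says ``by direct computation''), and your opening steps are sound: reading each inequality as $X^2-AX-B\le 0$ in $X=q^{n/2}$, bounding $X$ by the larger root, and using $\sqrt{A^2+4B}\le A+2\sqrt{B}$ to get $q^{n/2}\le A+\sqrt{B}$ is correct, as is the observation that $A=O(q^{2d})$ and $B=O(q^{t+d})$ with $d=\max\{k,\ell\}$. Your closing argument for the ``in particular'' clause (apply the bound over $\mathbb{F}_{q^{nm}}$ for infinitely many $m$, with the hypotheses persisting since they constrain only $k,\ell$ and the coefficients, and let the left side grow) is also correct, and it is worth noting that this part only needs the weakened bound $n/2\le\max\{\dots\}+O(1)$, so non-exceptionality is genuinely established by what you wrote.

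The gap is in your claim that the additive $O(1)$ can be removed by ``constant-tracking together with the integrality of the exponents $n,k,\ell,t$''. That prescription provably cannot succeed, because the implication ``inequality of Theorem~\ref{th:inequality} $\Rightarrow$ stated bound'' is \emph{false} over the integers if one forgets that $t$ divides $n$. Concretely, take $q=2$, $\ell=0$, $k=5$, $v=0$, $t=15$, $n=21$: then $A=(2^5-3)(2^5-4)=812$, so $Aq^{n/2}=812\cdot 2^{21/2}\approx 1.176\times 10^{6}$, and $B=2^{15}(2^5-2)+2(2^5-1)=983102$, whence
\[
Aq^{n/2}+B\approx 2.159\times 10^{6} > 2^{21}=2097152=q^n,
\]
so the hypothesis inequality of Theorem~\ref{th:inequality} holds numerically, while $n=21>20=\max\{4k,\,k+t\}$ violates the conclusion. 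All of your invoked constraints (integrality, $v<k$, the exact corrective constants in $A$ and $B$) are satisfied here; the only thing ruling this configuration out is that $15\nmid 21$. So the ``direct computation'' must use the standing hypothesis $t\mid n$ (hence $t\le n/2$ whenever $t<n$, the case $t=n$ being trivial since then $(k+t)/2\ge n/2$); with that extra arithmetic input, the estimates $A<c\,q^{2k}$ and $B<c\,q^{t+k}$ do force $n\le\max\{4k,\,k+t\}$, but without it no amount of bookkeeping lands on the stated maximum. Your proof as written omits this essential ingredient, and in the borderline regime ($n$ just above $4k$, $t$ near $3k$) the argument you sketch would fail.
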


As a consequence of Corollary \ref{cor:nuncestanno} and Proposition \ref{prop:mon}, we get a classification of exceptional L-$q^t$-partially scattered polynomials of small index.

\begin{corollary}%\label{cor:exc}
Let $f(x)\in\fqn[x]$ be an $\ell$-normalized exceptional L-$q^t$-partially scattered polynomial. Then the following holds:
\begin{itemize}
    \item if $\ell=0$, then $f(x)=x^{q^s}$ with $\gcd(s,n)\mid t$;
    \item if $\ell=1$, then $f(x)=\delta x + x^{q^2}$ with $\delta \in \fqn$. %$\mathrm{N}_{q^n/q^t}(\delta)\ne1$.
\end{itemize}
\end{corollary}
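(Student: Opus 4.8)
The plan is to obtain the classification as a bookkeeping consequence of the non-existence statement of Corollary~\ref{cor:nuncestanno} together with the monomial characterization of Proposition~\ref{prop:mon}, handling $\ell=0$ and $\ell=1$ separately. I would first record the structural fact that makes everything fit together: exceptionality forces the base-field condition. Indeed, if Condition~\eqref{eq:condL} holds for all $y,z\in\mathbb{F}_{q^{nm}}^{\ast}$ for even a single admissible $m$, then restricting $y,z$ to $\mathbb{F}_{q^n}^{\ast}\subseteq\mathbb{F}_{q^{nm}}^{\ast}$ shows that $f$ is already L-$q^t$-partially scattered over $\mathbb{F}_{q^n}$ itself. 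Hence every exceptional $\ell$-normalized $f$ is in particular an $\ell$-normalized L-$q^t$-partially scattered polynomial, and both Corollary~\ref{cor:nuncestanno} and Proposition~\ref{prop:mon} become applicable.

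For $\ell=0$ the hypotheses of Corollary~\ref{cor:nuncestanno} place no restriction on the $q$-degree, so its conclusion excludes every non-monomial $0$-normalized L-$q^t$-partially scattered polynomial from being exceptional. Therefore $f$ must be a monomial; being monic and $0$-normalized it equals $x^{q^s}$ with $s\ge1$, the value $s=0$ being discarded because then $f_\rho\equiv 0$ is never bijective, so $f=x$ is not L-$q^t$-partially scattered by Proposition~\ref{prop:characterizationpartially}. Feeding this monomial into Proposition~\ref{prop:mon} over $\mathbb{F}_{q^n}$ returns exactly the condition $\gcd(s,n)\mid t$, as claimed.

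For $\ell=1$ I would bound the $q$-degree by contradiction. If $f$ were non-monomial with $k\ge3$, it would meet the second bullet of Corollary~\ref{cor:nuncestanno}, whose conclusion would force $f$ to be a monomial, a contradiction; moreover the only $1$-normalized monomial is $f=x$ (monicity gives $x^{q^s}$, while separability~(iv) forces $s=0$), which has $q$-degree $0$. In every case $k\le2$. The normalization data, namely monic, vanishing coefficient of $x^{q^\ell}=x^{q}$, and separability $a_0\ne0$, then leave $f=a_0x+x^{q^2}$ with $a_0\ne0$ as the only possibility when $k=2$, that is $f=\delta x+x^{q^2}$; the remaining range $k\le1$ collapses again to the degenerate monomial $f=x$.

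The genuinely delicate step, and the one I would treat most carefully, is the bookkeeping of these boundary cases rather than any hard estimate: all the analytic content is already packaged in the Hasse--Weil argument behind Theorem~\ref{th:inequality} and Corollary~\ref{cor:nuncestanno}. In particular I would verify that the only monomials compatible with $\ell$-normalization are exactly the degenerate ones that must be excluded or subsumed (notably $f=x$ at $\ell=1$, which is the trivial scattered map rather than a genuine index-$1$ example), so that the surviving families are precisely $x^{q^s}$ with $\gcd(s,n)\mid t$ for $\ell=0$ and $\delta x+x^{q^2}$ for $\ell=1$, and that the $q$-degree thresholds appearing in the two bullets of Corollary~\ref{cor:nuncestanno} are consistent with this normalization, so that no admissible exceptional polynomial escapes the dichotomy ``monomial versus small $q$-degree''.
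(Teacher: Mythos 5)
Your route is exactly the paper's intended one: the paper derives this corollary in a single line from Corollary \ref{cor:nuncestanno} and Proposition \ref{prop:mon}, and your three ingredients --- exceptionality restricts to the base field, Corollary \ref{cor:nuncestanno} kills non-monomials for $\ell=0$ and non-monomials of $q$-degree $k\geq 3$ for $\ell=1$, and the $\ell$-normalization pins down the surviving shapes --- reproduce that argument faithfully. The $\ell=0$ bullet and the $k=2$ analysis at $\ell=1$ are handled correctly.

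The step that fails is precisely the one you single out as delicate: discarding $f(x)=x$ at $\ell=1$. Proposition \ref{prop:characterizationpartially} (bijectivity of $f_\rho$) characterizes partial scatteredness only for index $0$ --- recall the paper's convention that an unspecified index means $\ell=0$ --- so the observation that $f_\rho\equiv 0$ is not bijective says nothing about $f(x)=x$ \emph{as an index-$1$ polynomial}. In fact, at index $1$ one has $\frac{f(y)}{y^{q}}=\frac{f(z)}{z^{q}}$ if and only if $(y/z)^{q-1}=1$, i.e. $y/z\in\fq$, and this holds over every extension $\mathbb{F}_{q^{nm}}$; hence $f(x)=x$ is exceptional scattered of index $1$ (its subspace $\{(x^{q},x)\colon x\in\fqn\}$ is the graph of the index-$0$ pseudoregulus monomial $x^{q^{n-1}}$), in particular exceptional L-$q^t$-partially scattered of index $1$, and it satisfies all four $1$-normalization conditions (monic, $q$-degree $0<n$, no $x^{q}$-term, separable). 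So it cannot be excluded by any correct argument, and calling it ``the trivial scattered map rather than a genuine index-$1$ example'' is backwards. This degenerate case is silently glossed over by the paper's own statement as well; an airtight write-up should either list $f(x)=x$ as an extra degenerate member of the $\ell=1$ classification or note explicitly that, up to equivalence, it coincides with a monomial already covered by the $\ell=0$ bullet. A further small point: at $\ell=1$ the case $k=1$ does not ``collapse to $f=x$'' but is outright impossible, since monicity forces $a_1=1$ while normalization (iii) forces $a_1=0$.
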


When $n$ is odd, Proposition \ref{prop:LP} implies the following result.

\begin{corollary}
Let $n$ be an odd positive integer and $f(x)\in\fqn[x]$ be a $1$-normalized exceptional L-$q^t$-partially scattered polynomial. Then $f(x)=\delta x + x^{q^2}$ with $\mathrm{N}_{q^n/q}(\delta)\ne1$.
\end{corollary}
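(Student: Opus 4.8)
The plan is to combine the classification obtained in the preceding corollary with the non-scatteredness result of Proposition \ref{prop:LP}. Since $f(x)$ is $1$-normalized (so $\ell=1$) and exceptional L-$q^t$-partially scattered, the previous corollary gives at once $f(x)=\delta x+x^{q^2}$ for some $\delta\in\fqn$; here $\delta\neq0$, because $1$-normalization with $\ell=1$ imposes separability, i.e. a nonzero coefficient of $x$. Thus $\N_{q^n/q}(\delta)$ is a well-defined nonzero element of $\fq$, and the only point left to prove is that the value $1$ is excluded.

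I would argue by contradiction, assuming $\N_{q^n/q}(\delta)=1$, and show that then $f$ already fails to be L-$q^t$-partially scattered over $\fqn$. The key point is that the property is governed only by the $\fqn$-proportionality ratios occurring between nonzero points of the subspace: directly from \eqref{eq:condL}, the polynomial $f$ is L-$q^t$-partially scattered of index $\ell$ precisely when every ratio $\mu\in\fqn^*$ with $P=\mu P'$ for nonzero $P,P'\in U_f$ lies in $\mathbb{F}_{q^t}$ (indeed, $P=(y^{q^\ell},f(y))$ and $P'=(z^{q^\ell},f(z))$ are proportional with ratio $\mu=(y/z)^{q^\ell}$, and $\mu\in\mathbb{F}_{q^t}$ iff $y/z\in\mathbb{F}_{q^t}$). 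This description depends only on the point set $U_f$ and is invariant under invertible $\fqn$-linear maps of $\fqn\times\fqn$. Scaling $f$ by $\delta^{-1}$, which transforms $U_f$ by the $\fqn$-linear map $(a,b)\mapsto(a,\delta^{-1}b)$, yields $x+\delta^{-1}x^{q^2}$, whose associated subspace coincides, via the change of parametrization $x\mapsto x^q$, with $U_g$ for the LP-polynomial $g(x)=x^{q^{n-1}}+\delta^{-1}x^{q}$ (the case $s=1$, parameter $\delta^{-1}$, index $0$). Hence $f$ is L-$q^t$-partially scattered over $\fqn$ if and only if $g$ is.

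Now Proposition \ref{prop:LP} applies to $g$ with $s=1$: since $n$ is odd and $\N_{q^n/q}(\delta^{-1})=\N_{q^n/q}(\delta)^{-1}=1$, it gives that $g$ is neither L- nor R-$q^t$-partially scattered over $\fqn$, so by the equivalence above $f$ is not L-$q^t$-partially scattered over $\fqn$. On the other hand, exceptionality forces $f$ to be L-$q^t$-partially scattered over $\fqn$: if \eqref{eq:condL} holds for all $y,z\in\mathbb{F}_{q^{nm}}^*$ for some $m$ (one exists by definition), then in particular it holds for all $y,z\in\fqn^*\subseteq\mathbb{F}_{q^{nm}}^*$. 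This contradiction forces $\N_{q^n/q}(\delta)\neq1$.

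I expect the only delicate step to be the middle one, namely matching the normalized binomial $\delta x+x^{q^2}$ with the LP-polynomial $x^{q^{s(n-1)}}+\delta x^{q^s}$ of Proposition \ref{prop:LP}, and justifying that the scaling together with the passage from index $1$ to index $0$ preserve the L-$q^t$-partially scattered property; the proportionality-ratio reformulation makes this invariance transparent. Everything else is a direct application of results already available in the paper.
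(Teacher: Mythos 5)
Your proposal is correct and follows essentially the same route as the paper, which derives this corollary by combining the preceding classification (giving $f(x)=\delta x+x^{q^2}$ for $\ell=1$) with Proposition \ref{prop:LP}. The only difference is that you spell out the details the paper leaves implicit --- the scaling by $\delta^{-1}$, the reparametrization $x\mapsto x^q$ identifying the index-$1$ binomial with the index-$0$ LP-polynomial $x^{q^{n-1}}+\delta^{-1}x^q$, and the restriction argument showing exceptionality implies the property over $\fqn$ --- and all of these steps are carried out correctly.
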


Exceptional L-$q^t$-partially scattered polynomials $f(x)$ can also be investigated in connection with the Galois groups of certain extensions of function fields, similarly to what is done in \cite{BMZZ,FM} for exceptional scattered polynomials.

For the notations we refer to \cite[Section 2]{FM}.
In particular, for an $\ell$-normalized $\fq$-linearized polynomial $f(x)\in\fqn[x]$, let $s$ be a transcendental over $\fqn$ and $M$ be the splitting field of $f(x)-sx^{q^{\ell}}$ over $\fqn(s)$.
For any positive integer $m$, $M_m$ is the compositum function field $M\cdot \mathbb{F}_{q^{nm}}$, $k_m$ is the field of constants of $M_m$, $G_m^{\rm arith}$ and $G_m^{\rm geom}$ are respectively the arithmetic and the geometric Galois group of $M_m/\mathbb{F}_{q^{nm}}(s)$, and $\varphi_m$ is the isomorphism $G_m^{\rm arith}/G_m^{\rm geom}\to{\rm Gal}(k_m/\mathbb{F}_{q^{nm}})$.
Moreover, there exists a constant $C>0$ depending on $M/\fqn(s)$ such that for any $m$ satisfying $q^{nm}>C$ the following property holds: every $\gamma\in G_m^{\rm arith}$ such that $\varphi_m(\gamma)$ is the Frobenius automorphism for the extension $k_m/\mathbb{F}_{q^{nm}}$ is also a Frobenius at an unramified place at finite of degree $1$ of $\mathbb{F}_{q^{nm}}(s)$.
%Moreover, there exists a constant $C>0$, depending on the extension $M/\fqn(s)$ but not on $m$, such that for any $m$ satisfying $q^{nm}>C$ the following property holds: every $\gamma\in G_m^{\rm arith}$ such that $\varphi_m(\gamma)$ is the Frobenius automorphism for the extension $k_m/\mathbb{F}_{q^{nm}}$ is also a Frobenius at two distinct unramified places at finite of degree $1$ of $\mathbb{F}_{q^{nm}}(s)$ (see \cite[Lemma 2.2]{FM} and its proof).

\begin{theorem}\label{th:corrisp}
Let $\ell\geq1$ and $f(x)\in\fqn[x]$ be an $\ell$-normalized $\fq$-linearized polynomial. Let $d:=\max\{k,\ell\}<n$.
With the above notation, let $m\geq1$ be such that $q^{nm}>C$.
For any positive integer $t<d$, the following are equivalent:
\begin{itemize}
    \item[(i)] for every $z\in\mathbb{F}_{q^{nm}}^*$ there exists a $t$-dimensional $\fq$-subspace $U_z$ of $\mathbb{F}_{q^{nm}}$ such that, for any $y\in\mathbb{F}_{q^{nm}}^*$,
    \begin{equation}\label{eq:extdefL} \frac{f(y)}{y^{q^\ell}}=\frac{f(z)}{z^{q^\ell}} \Longrightarrow \frac{y}{z}\in U_z;
    \end{equation}
    \item[(ii)] for every $\gamma\in G_m^{\rm arith}$ such that $\varphi_m(\gamma)$ is a Frobenius for $k_m/\mathbb{F}_{q^{nm}}$ and every $h\in G_m^{\rm geom}$, the following condition holds:
    \[
    {\rm rk}(h\gamma - {\rm Id})\geq d-t.
    \]
\end{itemize}
In particular,  if $f(x)$ is L-$q^t$-partially scattered, then $G_m^{\rm arith}\neq G_m^{\rm geom}$.
\end{theorem}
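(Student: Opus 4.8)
The plan is to recast the whole statement in terms of the natural $\fq$-linear action of the Galois group on the space of roots of $f(x)-sx^{q^\ell}$, and then to use the specialization dictionary between $\mathbb{F}_{q^{nm}}$-rational roots and fixed points of Frobenius. Since $f$ is $\ell$-normalized with $\ell\geq1$, the coefficient $a_0$ of $x$ is nonzero, so $f(x)-sx^{q^\ell}$ is a separable $\fq$-linearized polynomial in $x$ of $q$-degree $d$ over $\mathbb{F}_{q^{nm}}(s)$; its roots form a $d$-dimensional $\fq$-vector space $W$ on which $G_m^{\rm arith}$ acts faithfully and $\fq$-linearly, giving an embedding $G_m^{\rm arith}\hookrightarrow {\rm GL}(d,q)$ under which ${\rm rk}(\sigma-{\rm Id})$ is the $\fq$-rank of $\sigma-{\rm Id}$ on $W$. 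First I would reformulate (i): for $z\in\mathbb{F}_{q^{nm}}^*$ put $s_0=f(z)/z^{q^\ell}$; then the $y\in\mathbb{F}_{q^{nm}}^*$ with $f(y)/y^{q^\ell}=s_0$ are exactly the nonzero $\mathbb{F}_{q^{nm}}$-rational roots of $F_{s_0}(x):=f(x)-s_0x^{q^\ell}$, which together with $0$ form an $\fq$-subspace $R_{s_0}$, and the set $\{y/z\}$ spans the $\fq$-space $z^{-1}R_{s_0}$ of dimension $\dim_{\fq}R_{s_0}$. Hence a $t$-dimensional $U_z$ as in (i) exists if and only if $\dim_{\fq}R_{s_0}\leq t$, so (i) is equivalent to $\dim_{\fq}R_{s_0}\leq t$ for every $s_0\in\mathbb{F}_{q^{nm}}$ (the inequality being vacuous when $R_{s_0}=0$).

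Next I would set up the Frobenius dictionary. For $s_0\in\mathbb{F}_{q^{nm}}$ let $\mathfrak{p}_{s_0}$ be the corresponding degree-$1$ place of $\mathbb{F}_{q^{nm}}(s)$; if it is unramified in $M_m$, reduction modulo a place of $M_m$ above $\mathfrak{p}_{s_0}$ identifies $W$ $\fq$-linearly with the root space of $F_{s_0}$ and intertwines ${\rm Frob}_{\mathfrak{p}_{s_0}}$ with the $q^{nm}$-power map, so that the rational roots are exactly the fixed points and $\dim_{\fq}R_{s_0}=d-{\rm rk}({\rm Frob}_{\mathfrak{p}_{s_0}}-{\rm Id})$. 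To guarantee that every relevant $s_0$ gives an unramified place I would exploit that $f$ is additive: $f'(x)=a_0$ is a nonzero constant and $\tfrac{d}{dx}x^{q^\ell}=0$, so along the degree-$q^d$ cover $\theta\mapsto s=f(\theta)/\theta^{q^\ell}$ one has $ds=a_0\,\theta^{-q^\ell}\,d\theta$, which never vanishes for finite $\theta\neq0$. Consequently $M_m/\mathbb{F}_{q^{nm}}(s)$ ramifies only over $s\in\{0,\infty\}$; in particular every finite place with $s_0\neq0$ is unramified, and the single possibly-ramified finite value $s_0=0$ (which occurs only if $k<\ell$) is harmless for the equivalence and can be treated directly. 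Combining this with the reformulation of (i), condition (i) becomes: ${\rm rk}({\rm Frob}_{\mathfrak{p}_{s_0}}-{\rm Id})\geq d-t$ for every finite degree-$1$ place $\mathfrak{p}_{s_0}$.

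It then remains to match the two families of group elements. Since conjugate matrices have equal rank, $\sigma\mapsto{\rm rk}(\sigma-{\rm Id})$ is a class function, hence well defined on Frobenius conjugacy classes. Each finite degree-$1$ place has residue field $\mathbb{F}_{q^{nm}}$, so its Frobenius lies in the coset $\varphi_m^{-1}({\rm Frob}_{k_m/\mathbb{F}_{q^{nm}}})$, which---$G_m^{\rm geom}$ being normal in $G_m^{\rm arith}$---is exactly $\{h\gamma:h\in G_m^{\rm geom}\}$ for any fixed $\gamma$ with $\varphi_m(\gamma)$ the Frobenius of $k_m/\mathbb{F}_{q^{nm}}$. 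Conversely, the defining property of $C$ (valid because $q^{nm}>C$) ensures every element of this coset is realized as a Frobenius at some unramified finite degree-$1$ place. Therefore the inequality ``${\rm rk}({\rm Frob}_{\mathfrak{p}_{s_0}}-{\rm Id})\geq d-t$ for all finite degree-$1$ places'' holds if and only if ``${\rm rk}(h\gamma-{\rm Id})\geq d-t$ for all $h\in G_m^{\rm geom}$ and all such $\gamma$'', which is precisely (ii); this yields the equivalence (i)$\Leftrightarrow$(ii).

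Finally, for the ``in particular'' clause I would argue by contraposition. If $G_m^{\rm arith}=G_m^{\rm geom}$, then $k_m=\mathbb{F}_{q^{nm}}$ and ${\rm Gal}(k_m/\mathbb{F}_{q^{nm}})$ is trivial, so the coset in (ii) is all of $G_m^{\rm arith}$ and in particular contains ${\rm Id}$, for which ${\rm rk}({\rm Id}-{\rm Id})=0<d-t$ because $t<d$; thus (ii) fails. Since L-$q^t$-partial scatteredness is exactly (i) with the choice $U_z=\mathbb{F}_{q^t}$, the equivalence forces (ii) to hold, whence $G_m^{\rm arith}\neq G_m^{\rm geom}$. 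I expect the main obstacle to be the linearized specialization dictionary of the second paragraph---establishing cleanly that the reduction map is an $\fq$-linear isomorphism intertwining ${\rm Frob}_{\mathfrak{p}_{s_0}}$ with the $q^{nm}$-power map, and controlling ramification so that no finite place spoils the count---while the passage to the full Frobenius coset rests squarely on the stated property of $C$.
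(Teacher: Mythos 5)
Your reformulation of (i), your (i)$\Rightarrow$(ii) argument via the defining property of $C$, and your proof of the final clause all agree with the paper's proof, which rests on the same specialization/Frobenius dictionary. The genuine problem is in (ii)$\Rightarrow$(i): your dictionary is established only at places that are unramified in $M_m$, and you dismiss the one remaining case --- the place $P$ at $s_0=0$, which is ramified precisely when $k<\ell$ (the place $\theta=\infty$ of $L_m$ then lies over $s=0$ with ramification index $q^\ell-q^k>1$) --- with the unsubstantiated assertion that it is ``harmless for the equivalence and can be treated directly.'' It is not harmless: by your own reformulation, condition (i) also constrains $s_0=0$, namely it requires $\dim_{\fq}\bigl(\ker f\cap\mathbb{F}_{q^{nm}}\bigr)\leq t$, and when $t<k<\ell$ this is a nontrivial bound (that kernel can a priori have dimension up to $k$). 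Since no Frobenius conjugacy class at $s_0=0$ exists in your unramified framework, your argument produces no bound there, and the implication (ii)$\Rightarrow$(i) is left unproved in exactly the case $d=\ell>k$.

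The paper's proof closes this hole by never requiring $P$ to be unramified in $M_m$: it works with the full decomposition group $D(R|P)$ at a place $R$ of $M_m$ over $P$. The only unramifiedness it needs is that of the \emph{finite} places of $L_m$ over $P$, which follows from separability of the specialized polynomial ($a_0\neq0$), not from unramifiedness of $P$ in $M_m$. Then \cite[Corollary 2.5]{FM} converts the (at least) $q^{t+1}-1$ rational roots into at least $q^{t+1}-1$ fixed points of the whole group $D(R|P)$ on $V\setminus\{0\}$, and the surjectivity of $D(R|P)\to{\rm Gal}(k_R/\mathbb{F}_{q^{nm}})$ supplies an element $\gamma\in D(R|P)$ with $\varphi_m(\gamma)$ equal to the Frobenius of $k_m/\mathbb{F}_{q^{nm}}$; this $\gamma$ fixes all those points, so ${\rm rk}(\gamma-{\rm Id})\leq d-(t+1)$, contradicting (ii). To repair your proof you would have to extend your reduction dictionary to the ramified place along these lines: show that the roots integral at $R$ form an $\fq$-subspace on which reduction is injective and $D(R|P)$-equivariant, and then run the coset argument with a lift $\gamma$ of the residue Frobenius --- which is precisely the decomposition-group step you omitted.
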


\begin{proof}
The proof generalizes the one of \cite[Theorem 2.7]{FM}.

(i) $\Rightarrow$ (ii). Let $h\in G_m^{\rm geom}$ and $\gamma\in G_m^{\rm arith}$ be such that $\varphi_m(\gamma)$ is the Frobenius automorphism for $k_m/\mathbb{F}_{q^{nm}}$. Since $q^{nm}>C$, there exists $s_0\in\mathbb{F}_{q^{nm}}$ such that the degree-$1$  zero $P$ of $s-s_0$ in $\mathbb{F}_{q^{nm}}(s)$ is unramified under $M_m$ and $h\gamma$ is a Frobenius at $P$.
%If $\ell=0$, we also require $s_0\ne0$ (this is possible by the definition of $C$), so that the polynomial $f(x)-s_0 x^{q^\ell}$ is separable.
By Condition (i), $f(x)/x-s_0x^{q^{\ell}-1}$ has at most $q^t-1$ roots in $\mathbb{F}_{q^{nm}}$, so that there are at most $q^t-1$ places of degree $1$ in $L_m:=\mathbb{F}_{q^{nm}}[x]/(f(x)/x-sx^{q^{\ell}-1})$ lying over $P$.
Let $R$ be a place of $M_m$ lying over $P$ and $V$ be the $d$-dimensional $\mathbb{F}_q$-vector space of roots of $f(x)-s x^{q^\ell}$.
By \cite[Corollary 2.5]{FM}, the decomposition group $D(R|P)$ has at most $q^t-1$ fixed points on $V\setminus\{0\}$.
As $R$ is unramified over $P$, $D(R|P)$ is cyclic and generated by $h\gamma$.
Then $h\gamma$ has at most $q^t-1$ fixed points on $V\setminus\{0\}$, and hence $h\gamma-{\rm Id}$ has rank at least $d-t$.

(ii) $\Rightarrow$ (i). Suppose by way of contradiction that Condition (i) does not hold. Then there exists $s_0\in\mathbb{F}_{q^{nm}}$ such that $f(x)/x-s_0 x^{q^{\ell}-1}$ has at least $q^{t+1}-1$ roots in $\mathbb{F}_{q^{nm}}$.
Let $P$ be the zero of $s-s_0$ in $\mathbb{F}_{q^{nm}}(s)$.
Then all places at finite of $L_m$ lying over $P$ are unramified (because the polynomial $f(x)-s_0x^{q^\ell}$ is separable), and there are at least $q^{t+1}-1$ of them having degree $1$.
Let $R$ be a place of $M_m$ over $P$.
By \cite[Corollary 2.5]{FM}, $D(R|P)$ has at least $q^{t+1}-1$ fixed points on $V\setminus\{0\}$.
This holds in particular for an element $\gamma\in D(R|P)$ whose image in the surjective homomorphism $D(R|P)\to{\rm Gal}(k_R/\mathbb{F}_{q^{nm}})$ is the Frobenius for $k_m/\mathbb{F}_{q^{nm}}$ (notice that ${\rm Gal}(k_m/\mathbb{F}_{q^{nm}})$ is a subgroup of ${\rm Gal}(k_R/\mathbb{F}_{q^{nm}})$ of index the degree of $R$).
Therefore $\varphi_m(\gamma)$ is the Frobenius of $k_m/\mathbb{F}_{q^{nm}}$, and $\gamma-{\rm Id}$ has rank at most $d-(t+1)$. This is a contradiction to (ii).
\end{proof}

%Theorem \ref{th:corrisp} generalizes \cite[Theorem 2.6]{FM} and the proof is similar, so we omit it here.
Note that the case $t=1$ is Theorem 2.6 in \cite{FM} and corresponds to $f(x)$ being scattered over $\mathbb{F}_{q^{nm}}$.
Also, L-$q^t$-partially scattered polynomials over $\mathbb{F}_{q^{nm}}$ satisfy Condition \eqref{eq:extdefL}, with $U_z=\mathbb{F}_{q^t}$ for every $z$.

\begin{corollary}
Under the same notation as in Theorem \ref{th:corrisp}, suppose that $d$ is prime. Then the following hold:
\begin{itemize}
    \item if $f(x)$ is not a monomial and $q$ is odd, then $f(x)$ is not exceptional L-$q^t$-partially scattered;
    \item if $f(x)$ is a monomial and $\gcd(d,n)=1$, then $f(x)$ is exceptional scattered;
    \item if $f(x)$ is a monomial and $1<\gcd(d,n)\mid t$, then $f(x)$ is exceptional L-$q^t$-partially scattered but not exceptional scattered.
\end{itemize}
\end{corollary}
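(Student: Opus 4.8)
The plan is to separate the two monomial bullets, which follow at once from the explicit count in Proposition~\ref{prop:mon}, from the substantive non-monomial bullet, which genuinely needs the monodromy criterion of Theorem~\ref{th:corrisp}.

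For a monomial $f(x)=x^{q^u}$ with $u=d$, I would apply Proposition~\ref{prop:mon} over each extension $\mathbb{F}_{q^{nm}}$: there $x^{q^u}$ is scattered iff $\gcd(u,nm)=1$ and L-$q^t$-partially scattered iff $\gcd(u,nm)\mid t$. Restricting to the infinitely many $m$ coprime to $u$ gives $\gcd(u,nm)=\gcd(u,n)=\gcd(d,n)$, so the property over $\mathbb{F}_{q^{nm}}$ coincides with the one over $\mathbb{F}_{q^n}$. Hence $\gcd(d,n)=1$ yields exceptional scatteredness (the pseudoregulus family), while $1<\gcd(d,n)\mid t$ yields a polynomial that is exceptional L-$q^t$-partially scattered but not scattered. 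This settles the last two bullets.

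For the non-monomial bullet I would argue by contradiction, assuming $f$ exceptional L-$q^t$-partially scattered. Taking $U_z=\mathbb{F}_{q^t}$ in Theorem~\ref{th:corrisp}, condition (ii) holds for infinitely many $m$: every element $g$ of the Frobenius coset of $G_m^{\mathrm{geom}}$ in $G_m^{\mathrm{arith}}$ satisfies $\mathrm{rk}(g-\mathrm{Id})\ge d-t$ on the $d$-dimensional $\mathbb{F}_q$-space $V$ of roots of $f(x)-sx^{q^\ell}$. Since the geometric monodromy stabilises to a fixed group $G\le\mathrm{GL}(d,q)$, a density argument in the spirit of Micheli~\cite{Micheli1,Micheli2} reduces the exceptionality to the existence of a single generating coset $G\sigma$, with $\bar\sigma$ generating the cyclic quotient $G_m^{\mathrm{arith}}/G$, all of whose elements have rank $\ge d-t$ on $V$. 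The decisive structural input is that $d$ is prime: this forces the dichotomy that $G$ is either conjugate into $\Gamma\mathrm{L}(1,q^d)$, in which case $f$ reduces, up to the normalisations, to a monomial against our hypothesis, or $G$ contains $\mathrm{SL}(d,q)$.

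In the large case $G\supseteq\mathrm{SL}(d,q)$ the aim is to rule out the coset condition by exhibiting a rank-deficient element of $G\sigma$. If $\sigma$ could be taken linear this would be immediate for $t\le d-2$, since the coset meets a determinant fibre and a diagonal element $\mathrm{diag}(\delta,1,\dots,1)$ in it fixes a hyperplane, giving $\mathrm{rk}(g-\mathrm{Id})=1<d-t$. The genuine difficulty — and the main obstacle — is that the coefficients of $f$ lie in $\mathbb{F}_{q^n}$ rather than $\mathbb{F}_q$, so the Frobenius generator $\sigma$ is properly semilinear and the elements of $G\sigma$ must be analysed as semilinear maps, whose fixed spaces are modules over a proper subfield; this is exactly what makes even the boundary range $t=d-1$ nontrivial. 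Carrying out this fixed-space estimate uniformly for all $t<d$ is where the hypothesis $q$ odd is used, to exclude the exceptional characteristic-two members and the sporadic small groups arising in the prime-degree classification. Once every generating coset is shown to contain an element of rank $<d-t$ — and such a coset must be nontrivial, since the only alternative $G_m^{\mathrm{arith}}=G_m^{\mathrm{geom}}$ already forces the failing coset $G\ni\mathrm{Id}$ by the last assertion of Theorem~\ref{th:corrisp} — the coset condition fails, contradicting exceptionality and proving that no non-monomial $f$ is exceptional L-$q^t$-partially scattered when $q$ is odd.
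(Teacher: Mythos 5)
Your handling of the two monomial bullets via Proposition \ref{prop:mon} is correct and matches the paper, but the non-monomial bullet contains a genuine gap, located exactly in the clause where you dismiss the field-extension case. The paper's proof runs as follows: Theorem \ref{th:corrisp} gives $G_m^{\rm arith}\ne G_m^{\rm geom}$ for infinitely many admissible $m$; then, as in \cite[Lemma 4.1]{FM}, primality of $d$ together with transitivity of $G_m^{\rm arith}$ on $V\setminus\{0\}$ (an ingredient absent from your sketch: it comes from irreducibility of $(f(x)-sx^{q^\ell})/x$ and Hering's classification of transitive linear groups, since primality alone still admits reducible, imprimitive, extraspecial-normalizer and almost-simple irreducible subgroups of $\mathrm{GL}(d,q)$) and the fixed-space bound coming from exceptionality force $|G_m^{\rm geom}|=q^d-1$ and $G_m^{\rm arith}\cong \mathrm{GL}(1,q^d)\rtimes C_d$; finally, the contradiction with $f$ being a non-monomial is extracted from this Singer-type structure by the ramification-theoretic analysis of \cite[p.~700]{FM}, and \emph{that} is where the hypothesis that $q$ is odd enters. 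In your proposal this entire last step is compressed into ``$G$ conjugate into $\Gamma\mathrm{L}(1,q^d)$, in which case $f$ reduces, up to the normalisations, to a monomial against our hypothesis.'' That implication is not a normalization: knowing that the monodromy lies in the field-extension class says nothing immediate about the coefficients of $f$, and proving that it forces monomiality (equivalently, deriving a contradiction when $f$ is not a monomial) is the substantive content of the result. You have asserted the theorem at precisely the point where it needed to be proved.

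Compounding this, the case you devote your effort to, $G\supseteq\mathrm{SL}(d,q)$, is the easy one, and your account of its difficulty rests on a misconception. Every element of $G_m^{\rm arith}$ fixes $\mathbb{F}_{q^{nm}}\supseteq\mathbb{F}_q$ pointwise and hence acts $\mathbb{F}_q$-\emph{linearly} on the $d$-dimensional root space $V$; there are no ``properly semilinear'' Frobenius elements, and $G_m^{\rm arith}\le\mathrm{GL}(d,q)$ (the arithmetic/geometric distinction concerns the constant field extension $k_m/\mathbb{F}_{q^{nm}}$, not linearity of the action on $V$). Consequently your determinant-adjusted dilation genuinely lies in the Frobenius coset and disposes of this case for every $t\le d-2$; there is no semilinear fixed-space estimate to carry out, and $q$ odd plays no role here, so your proposed use of that hypothesis does not correspond to an actual argument. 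The one real subtlety you correctly sensed, the boundary value $t=d-1$ (where a hyperplane-fixing element has rank exactly $d-t$), is not resolved by the semilinearity discussion: it requires returning to the full strength of L-$q^t$-partial scatteredness, in which the subspaces $U_z$ of Theorem \ref{th:corrisp} are the fixed field $\mathbb{F}_{q^t}$ rather than arbitrary $t$-dimensional spaces, or to the inertia structure of the cover. As it stands, the non-monomial bullet is unproved: the decisive implication --- Singer-type monodromy together with $q$ odd and non-monomiality yields a contradiction --- is missing, and the difficulty has been inverted relative to where it actually lies.
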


\begin{proof}
If $f(x)$ is not a monomial, suppose by way of contradiction that $f(x)$ is exceptional L-$q^t$-partially scattered. Then, by Theorem \ref{th:corrisp}, $G_m^{\rm arith}\ne G_m^{\rm geom}$ for any $m\geq1$ big enough. Then, as in \cite[Lemma 4.1]{FM}, $|G_m^{\rm geom}|=q^d-1$ and $G_m^{\rm arith}\cong GL(1,q^d)\rtimes C_d$, and a contradiction is obtained as in \cite[Page 700]{FM}.
If $f(x)$ is a monomial, then the claim follows from Proposition \ref{prop:mon}.
\end{proof}

It is worth noting that we do not know so far exceptional L-${q^t}$-partially scattered polynomials which are not exceptional scattered, other than in the monomial case.

A possible machinery to obtain suitable examples involves the group-theoretical structure of $G_m^{\rm geom}$ and $G_m^{\rm arith}$, and is currently under investigation in \cite{DanGio2021}.

\section{A family of R-\texorpdfstring{$q^t$}{Lg}-partially scattered polynomials}\label{sec:example}

In this section, via Proposition \ref{prop:LZ}, we consider a new family of R-$q^t$-partially scattered polynomials. 

\begin{proposition}\label{prop:family}
Let $n=t t'$, for some $t,t' \in \mathbb{N}$, and let $s\in \mathbb{N}$ be such that $\gcd(s,t)=1$. Then 
\begin{equation}\label{eq:form} 
f(x)=\sum_{i=0}^{t'-1} a_i x^{q^{it+s}}\in\fqn[x] \end{equation}
is R-$q^t$-partially scattered if and only if $f(x)$ is invertible over $\fqn$.
\end{proposition}
\begin{proof}
Consider $\varphi(x)=x^{q^s}$, with $\gcd(s,t)=1$.
The polynomial $\varphi(x)$ is R-$q^t$-partially scattered, because of Proposition \ref{prop:mon}, and invertible over $\fqn$. 
Let $g_{\mathbf{a}}(x)=\sum_{i=0}^{t'-1} a_i x^{q^{it}}$. 
By Proposition \ref{prop:LZ} we have that $f(x)=(g_{\mathbf{a}}\circ \varphi)(x) $
is R-$q^t$-partially scattered if and only if $g_{\mathbf{a}}(x)$ is invertible over $\fqn$.
\end{proof}

\begin{corollary}\label{cor:bin}
Let $n=t t'$, for some $t,t' \in \mathbb{N}$, and let $s,k>0$ be such that $\gcd(s,t)=1$ and $kt+s<n$. Then $f(x)=x^{q^{kt+s}}+\alpha x^{q^s}\in\fqn[x]$ is R-$q^t$-partially scattered if and only if $\N_{q^n/q^{t\cdot\gcd(k,t^\prime)}}(-\alpha)\ne 1$.
In this case, $f(x)$ is exceptional R-$q^t$-partially scattered.
\end{corollary}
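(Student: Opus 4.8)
The plan is to reduce everything to a single invertibility question via Proposition \ref{prop:family} and then to a power-map/norm computation. Note first that $kt+s<n=tt'$ forces $k\le t'-1$, so $f(x)=x^{q^{kt+s}}+\alpha x^{q^s}$ is of the form \eqref{eq:form} with $t'$ summands. Hence Proposition \ref{prop:family} applies and $f$ is R-$q^t$-partially scattered if and only if $f$ is invertible over $\fqn$. Writing $\varphi(x)=x^{q^s}$ and $g(x)=x^{q^{kt}}+\alpha x$, we have $f=g\circ\varphi$ with $\varphi$ bijective, so $f$ is invertible exactly when $g$ is. Setting $Q=q^t$, the map $g$ is an $\F_Q$-linear binomial $x^{Q^k}+\alpha x$ on $\fqn=\F_{Q^{t'}}$, and I would characterise its invertibility through its kernel: $g$ is bijective if and only if $x^{Q^k-1}=-\alpha$ has no nonzero solution, i.e. $-\alpha$ is not a value of the power map $x\mapsto x^{Q^k-1}$ on $\F_{Q^{t'}}^{*}$. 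Since $\gcd(Q^k-1,Q^{t'}-1)=Q^{d}-1$ with $d=\gcd(k,t')$, the image of this power map is precisely the kernel of the relative norm $\N_{Q^{t'}/Q^{d}}$, so $-\alpha$ lies in it exactly when $\N_{Q^{t'}/Q^{d}}(-\alpha)=1$. Translating $Q^{t'}=q^{n}$ and $Q^{d}=q^{t\gcd(k,t')}$ yields the stated equivalence.

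For the exceptionality statement I would run the very same reduction over $\mathbb{F}_{q^{nm}}$, where $t'$ is replaced by $t'm$. This shows that $f$ is R-$q^t$-partially scattered over $\mathbb{F}_{q^{nm}}$ if and only if $\N_{q^{nm}/q^{t\gcd(k,t'm)}}(-\alpha)\ne1$, so I only need to exhibit infinitely many $m$ for which this holds. The first difficulty is that the exponent $\gcd(k,t'm)$ varies with $m$; I would neutralise it by restricting to $m$ coprime to $k$, for which $\gcd(k,t'm)=\gcd(k,t')=d$ (checking prime by prime, only primes dividing $k$ could enlarge the gcd, and these are excluded).

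With $d$ fixed, set $\beta=-\alpha$ and $A_m=(Q^{t'm}-1)/(Q^{d}-1)$, so that the condition is $\beta^{A_m}\ne1$. The key algebraic step is the multiplicative factorisation
\[
A_m=A_1\cdot\frac{Q^{t'm}-1}{Q^{t'}-1},
\]
which I would obtain by grouping the geometric sum $\sum_{j} Q^{jd}$ into blocks of length $t'/d$. This gives $\beta^{A_m}=\gamma^{B_m}$ with $\gamma:=\beta^{A_1}=\N_{q^{n}/q^{td}}(-\alpha)$ and $B_m=\sum_{i=0}^{m-1}Q^{t'i}$. Because $\gamma\in\F_{Q^{d}}^{*}$ and $Q^{t'}\equiv1$ modulo $\operatorname{ord}(\gamma)$, each summand of $B_m$ is $\equiv1$, whence $B_m\equiv m$ and therefore $\beta^{A_m}=\gamma^{m}$. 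Under the hypothesis $\gamma\ne1$ (which is exactly R-$q^t$-partial scatteredness over $\fqn$), writing $e:=\operatorname{ord}(\gamma)>1$ we conclude that $f$ is R-$q^t$-partially scattered over $\mathbb{F}_{q^{nm}}$ precisely when $e\nmid m$. Every $m$ coprime to $ke$ then satisfies both constraints, and there are infinitely many such $m$, which proves that $f$ is exceptional R-$q^t$-partially scattered.

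The step I expect to require the most care is controlling the $m$-dependence of $\gcd(k,t'm)$ simultaneously with the persistence of the norm condition throughout the tower $\mathbb{F}_{q^{nm}}$; the clean resolution is the identity $A_m=A_1\cdot(Q^{t'm}-1)/(Q^{t'}-1)$, which collapses the entire tower of conditions into the single inequality $\gamma^{m}\ne1$, after which only an elementary coprimality argument remains.
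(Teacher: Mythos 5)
Your proof is correct and follows essentially the same route as the paper: the reduction to invertibility of $x^{q^{kt}}+\alpha x$ via Proposition \ref{prop:family}, the norm criterion for invertibility of that binomial, and, for exceptionality, restricting to $m$ coprime to $k$ (to stabilise $\gcd(k,mt')$) together with a coprimality condition forcing the norm power $\gamma^m\ne 1$. The paper compresses your exponent computation $\beta^{A_m}=\gamma^m$ into the norm-transitivity identity $\N_{q^{mn}/q^{t\gcd(k,mt')}}(-\alpha)=\N_{q^{mn}/q^n}\bigl(\N_{q^{n}/q^{t\gcd(k,t')}}(-\alpha)\bigr)\ne1$ for all $m$ with $\gcd(m,k(q^n-1))=1$, which is the same argument in a more condensed form.
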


\begin{proof}
As $f(x)=(g_{\mathbf{a}}\circ \varphi)(x)$, where $g_{\mathbf{a}}(x)=x^{q^{kt}}+\alpha x$ and $\varphi(x)=x^{q^s}$, by  Proposition \ref{prop:family}, $f(x)$ is R-$q^t$-partially scattered if and only if $\N_{q^n/q^{t\cdot\gcd(k,t^\prime)}}(-\alpha)\ne 1$.
In this case we have $\N_{q^{mn}/q^{t\cdot\gcd(k,mt^\prime)}}(-\alpha)=\N_{q^{mn}/q^n}(\N_{q^{n}/q^{t\cdot\gcd(k,t^\prime)}}(-\alpha))\ne 1$ for any $m\in\mathbb{N}$ with $\gcd(m,k(q^n-1))=1$, and the claim follows.
\end{proof}

\begin{corollary}
Let $n=3t$, for some $t \in \mathbb{N}$, and let $s\in \mathbb{N}$ be such that $\gcd(s,t)=1$. Then $f(x)=x^{q^{2t+s}}+\beta x^{q^{t+s}}+\alpha x^{q^s}$ is R-$q^t$-partially scattered if and only if $\N_{q^{3t}/q^t}(\alpha)+\N_{q^{3t}/q^t}(\beta)-\mathrm{Tr}_{q^{3t}/q^t}(\alpha \beta^{q^t})+1\ne0$.
\end{corollary}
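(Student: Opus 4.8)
The plan is to reduce the R-$q^t$-partial scatteredness to an invertibility statement and then to an explicit determinant computation. Here $n=3t$, so $t'=3$, and the polynomial is exactly of the shape \eqref{eq:form} with $a_0=\alpha$, $a_1=\beta$, $a_2=1$. Since $\gcd(s,t)=1$, Proposition \ref{prop:family} applies and shows that $f(x)$ is R-$q^t$-partially scattered if and only if $f(x)$ is invertible over $\fqn$. Writing $f=g_{\mathbf a}\circ\varphi$ with $\varphi(x)=x^{q^s}$ and $g_{\mathbf a}(x)=\alpha x+\beta x^{q^t}+x^{q^{2t}}$, and noting that $\varphi$ is a bijection of $\fqn$, the task becomes to decide exactly when $g_{\mathbf a}$ is invertible over $\fqn=\F_{q^{3t}}$.

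The second step is to view $g_{\mathbf a}$ as a $q^t$-linearized polynomial, hence as an $\F_{q^t}$-linear endomorphism of the $3$-dimensional $\F_{q^t}$-vector space $\F_{q^{3t}}$. Such a map is invertible if and only if the determinant of its associated Dickson (auto-circulant) matrix is nonzero; this is the standard nonsingularity criterion for linearized polynomials. With coefficients $(\alpha,\beta,1)$ and the $q^t$-power Frobenius, the Dickson matrix is
\[
D=\begin{pmatrix}
\alpha & \beta & 1\\
1 & \alpha^{q^t} & \beta^{q^t}\\
\beta^{q^{2t}} & 1 & \alpha^{q^{2t}}
\end{pmatrix},
\]
so that $g_{\mathbf a}$, and hence $f(x)$, is R-$q^t$-partially scattered if and only if $\det D\neq0$.

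The final step is to compute $\det D$ and recognize the intrinsic invariants. Expanding along the first row yields a sum of six monomials: the two diagonal-type products $\alpha^{1+q^t+q^{2t}}$ and $\beta^{1+q^t+q^{2t}}$ are precisely $\N_{q^{3t}/q^t}(\alpha)$ and $\N_{q^{3t}/q^t}(\beta)$, the constant term contributes $1$, and the three mixed products $\alpha\beta^{q^t}$, $\alpha^{q^{2t}}\beta$, $\alpha^{q^t}\beta^{q^{2t}}$ assemble (using $\beta^{q^{3t}}=\beta$) into $\mathrm{Tr}_{q^{3t}/q^t}(\alpha\beta^{q^t})$ with a global minus sign. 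This gives
\[
\det D=\N_{q^{3t}/q^t}(\alpha)+\N_{q^{3t}/q^t}(\beta)-\mathrm{Tr}_{q^{3t}/q^t}(\alpha\beta^{q^t})+1,
\]
and the claim follows. The computation is elementary; the only genuine step is invoking the Dickson-matrix nonsingularity criterion and then carefully grouping the three cross terms into the relative trace, which is where a sign or Frobenius-exponent slip would be easiest to make. I expect no real obstacle beyond this bookkeeping.
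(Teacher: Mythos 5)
Your proposal is correct and takes essentially the same route as the paper: it invokes Proposition \ref{prop:family} to reduce the R-$q^t$-partial scatteredness of $f(x)$ to the invertibility of the $\F_{q^t}$-linearized polynomial $\alpha x+\beta x^{q^t}+x^{q^{2t}}$ over $\F_{q^{3t}}$, and then applies the Dickson-matrix nonsingularity criterion to exactly the same $3\times3$ matrix. The only difference is that you write out the determinant expansion and the grouping of the cross terms into $\mathrm{Tr}_{q^{3t}/q^t}(\alpha\beta^{q^t})$, which the paper leaves implicit with ``from which the assertion follows.''
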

\begin{proof}
By Proposition \ref{prop:family}, $f(x)$ is R-$q^t$-partially scattered if and only if the $\F_{q^t}$-linearized polynomial $g(x)=\alpha x+\beta x^{q^{t}}+x^{q^{2t}}\in \fqn[x]$ is invertible.
This happens if and only if
\[ \det\left( 
\begin{array}{ccc}
\alpha & \beta & 1\\
1 & \alpha^{q^t} & \beta^{q^t}\\
\beta^{q^{2t}} & 1 & \alpha^{q^{2t}}
\end{array}
\right)\ne0, \]
from which the assertion follows.
\end{proof}

We are able to determine and count all the polynomials in \eqref{eq:form} which are R-$q^t$-partially scattered.
To this aim, we recall the following result.

\begin{theorem}\cite[Theorem 2.1 and Corollary 2.3]{polinv}\label{th:polinv}
Let $\alpha$ be any primitive element in $\fqn$. Every invertible $\fq$-linearized polynomial $f(x)$ over $\fqn$ (of $q$-degree smaller than $n$) has the following form:
\begin{equation}\label{eq:forma}
f(x)=\sum_{i=0}^{n-1}(\alpha_0+\alpha^{q^i}\alpha_1+\ldots+\alpha^{(n-1)q^i}\alpha_{n-1})x^{q^i} \in \fqn[x],
\end{equation}
where $\{\alpha_0,\ldots,\alpha_{n-1}\}$ is any $\fq$-basis of $\fqn$.
Conversely, every polynomial over $\mathbb{F}_{q^n}$ of the form \eqref{eq:forma} is invertible.
In particular, the number of invertible $\fq$-linearized polynomials over $\fqn$ (of $q$-degree smaller than $n$) is
\begin{equation}\label{eq:num} 
(q^n-1)\cdot (q^n-q)\cdot \ldots \cdot (q^n-q^{n-1}). 
\end{equation}
\end{theorem}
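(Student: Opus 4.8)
The plan is to reinterpret the statement entirely in terms of $\fq$-linear endomorphisms of $\fqn$, viewed as an $n$-dimensional $\fq$-vector space, via the standard one-to-one correspondence (recalled in the Introduction) between $\fq$-linearized polynomials of $q$-degree smaller than $n$ and such endomorphisms. Under this identification invertibility of $f(x)$ is exactly invertibility of the associated linear map, so the count \eqref{eq:num} will follow immediately from $|GL(n,q)|=\prod_{i=0}^{n-1}(q^n-q^i)$ once the parametrization \eqref{eq:forma} is shown to be a bijection onto the invertible maps.

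The key step is to recognize the special shape of the coefficients in \eqref{eq:forma} as a trace expansion. Writing $a_i=\sum_{j=0}^{n-1}\alpha^{jq^i}\alpha_j$ and using $(\alpha^j x)^{q^i}=\alpha^{jq^i}x^{q^i}$ together with $\sum_{i=0}^{n-1}(\alpha^j x)^{q^i}=\mathrm{Tr}_{q^n/q}(\alpha^j x)$, one obtains
\[
f(x)=\sum_{i=0}^{n-1}a_i x^{q^i}=\sum_{j=0}^{n-1}\alpha_j\sum_{i=0}^{n-1}\alpha^{jq^i}x^{q^i}=\sum_{j=0}^{n-1}\alpha_j\,\mathrm{Tr}_{q^n/q}(\alpha^j x).
\]
Thus every polynomial of the form \eqref{eq:forma} is the linear map $x\mapsto\sum_{j}\alpha_j\,\phi_j(x)$, where $\phi_j:=\mathrm{Tr}_{q^n/q}(\alpha^j\,\cdot\,)$. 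Since $\alpha$ is primitive, $\{1,\alpha,\ldots,\alpha^{n-1}\}$ is an $\fq$-basis of $\fqn$; as the trace form $\langle x,y\rangle=\mathrm{Tr}_{q^n/q}(xy)$ is non-degenerate, the assignment $c\mapsto\mathrm{Tr}_{q^n/q}(c\,\cdot\,)$ is an isomorphism from $\fqn$ onto its $\fq$-dual, so $\{\phi_0,\ldots,\phi_{n-1}\}$ is a basis of $\mathrm{Hom}_{\fq}(\fqn,\fq)$.

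With this in hand both directions are immediate. Let $\{v_0,\ldots,v_{n-1}\}$ be the $\fq$-basis dual to $\{\phi_0,\ldots,\phi_{n-1}\}$, so that $x=\sum_j\phi_j(x)v_j$ for all $x$. For the converse direction, $f(x)=\sum_j\alpha_j\phi_j(x)$ sends $v_k$ to $\alpha_k$, hence is invertible precisely when $\{\alpha_0,\ldots,\alpha_{n-1}\}$ is an $\fq$-basis; in particular every polynomial of the form \eqref{eq:forma} with the $\alpha_j$ forming a basis is invertible. For the direct statement, given any invertible linearized $f$, set $\alpha_j:=f(v_j)$; then $f(x)=\sum_j\phi_j(x)f(v_j)=\sum_j\alpha_j\phi_j(x)$ recovers the shape \eqref{eq:forma}, and $\{\alpha_j\}$ is a basis because $f$ is invertible. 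This exhibits $\{\alpha_j\}\mapsto f$ as a bijection between ordered $\fq$-bases of $\fqn$ and invertible linearized polynomials, and counting the ordered bases yields the product \eqref{eq:num}.

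The only genuinely non-routine point is spotting the trace rewriting in the second step; after that the argument is elementary linear algebra over $\fq$, resting on the non-degeneracy of the trace form and the primitivity of $\alpha$ to ensure that the functionals $\phi_j$ form a basis of the dual. The expected mild obstacle is purely bookkeeping: matching the ordering of the coefficients $a_i$ against the trace indexing, and verifying that it is the correspondence with \emph{ordered} bases that produces the factor-by-factor product in \eqref{eq:num}.
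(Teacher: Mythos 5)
Your proposal is correct, but a preliminary remark is in order: the paper itself gives no proof of this statement. It is imported wholesale from the cited reference \cite{polinv} (Zhou, Theorem 2.1 and Corollary 2.3), so there is no internal argument to compare against; what you have written is a self-contained proof where the paper supplies only a citation. Your argument is sound at every step: the rewriting $f(x)=\sum_{j}\alpha_j\,\mathrm{Tr}_{q^n/q}(\alpha^j x)$ is exactly the right way to decode the coefficient shape in \eqref{eq:forma}; primitivity of $\alpha$ does guarantee that $1,\alpha,\ldots,\alpha^{n-1}$ is an $\fq$-basis (a generator of $\fqn^*$ cannot lie in a proper subfield, so its minimal polynomial over $\fq$ has degree $n$); non-degeneracy of the trace form makes the functionals $\phi_j=\mathrm{Tr}_{q^n/q}(\alpha^j\,\cdot\,)$ a basis of the dual space; and the dual-basis computation $f(v_k)=\alpha_k$ simultaneously yields both implications and the injectivity of the parametrization $(\alpha_0,\ldots,\alpha_{n-1})\mapsto f$, whence the count \eqref{eq:num} as the number of ordered $\fq$-bases. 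Two minor observations. First, the count in \eqref{eq:num} already follows from your opening paragraph alone: the correspondence between linearized polynomials of $q$-degree less than $n$ and $\fq$-linear endomorphisms identifies the invertible ones with $\mathrm{GL}(n,q)$, so the parametrization is needed only for the explicit form \eqref{eq:forma}, not for the enumeration. Second, your direct direction tacitly uses that the linearized polynomial of $q$-degree less than $n$ representing a given map is unique (a nonzero such polynomial has degree at most $q^{n-1}$ and so cannot vanish on all of $\fqn$); this is precisely the one-to-one correspondence you invoke at the outset, so nothing is missing, but it is worth saying aloud since the conclusion of the theorem is a statement about polynomial coefficients rather than about maps.
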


This allows us to prove the following result.

\begin{theorem}
Let $n=t t'$, for some $t,t' \in \mathbb{N}$, and let $s\in \mathbb{N}$ be such that $\gcd(s,t)=1$. The  R-$q^t$-partially scattered linearized polynomials of the form \eqref{eq:form} are those of the shape
\[ f(x)=\sum_{i=0}^{t'-1}(\alpha_0+\alpha^{q^{it}}\alpha_1+\ldots+\alpha^{(t'-1)q^{it}}\alpha_{t'-1})x^{q^{it+s}}, \]
where $\alpha$ is any primitive element in $\fqn$ and $\{\alpha_0,\ldots,\alpha_{t'-1}\}$ is any $\F_{q^t}$-basis of $\fqn$.
In particular the number of R-$q^t$-partially scattered polynomials of the form \eqref{eq:form} is
\[ (q^n-1)\cdot(q^n-q^t)\cdot \ldots \cdot (q^n-q^{n-t}). \]
\end{theorem}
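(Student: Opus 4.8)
The plan is to reduce the statement entirely to the characterization and enumeration of invertible linearized polynomials furnished by Theorem \ref{th:polinv}, after a change of ground field from $\fq$ to $\F_{q^t}$.

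First I would recall, as in the proof of Proposition \ref{prop:family}, that $f(x)=(g_{\mathbf{a}}\circ\varphi)(x)$, where $\varphi(x)=x^{q^s}$ is a fixed invertible map and $g_{\mathbf{a}}(x)=\sum_{i=0}^{t'-1}a_ix^{q^{it}}$. By Proposition \ref{prop:family}, a polynomial of the form \eqref{eq:form} is R-$q^t$-partially scattered if and only if $g_{\mathbf{a}}(x)$ is invertible over $\fqn$. Moreover, the assignment $g_{\mathbf{a}}\mapsto f=g_{\mathbf{a}}\circ\varphi$ is a bijection between the polynomials $\sum_{i=0}^{t'-1}a_ix^{q^{it}}$ and those of the form \eqref{eq:form}, since it merely transports each coefficient $a_i$ from the monomial $x^{q^{it}}$ to $x^{q^{it+s}}$ (the exponents $it+s$ being pairwise distinct modulo $n$ for $i=0,\ldots,t'-1$). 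Hence it suffices to describe and to count the invertible $g_{\mathbf{a}}$'s.

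Next I would observe that $g_{\mathbf{a}}(x)=\sum_{i=0}^{t'-1}a_ix^{(q^t)^i}$ is exactly an $\F_{q^t}$-linearized polynomial over $\fqn$ of $q^t$-degree smaller than $t'$, and that $\fqn=\F_{(q^t)^{t'}}$ is the degree-$t'$ extension of $\F_{q^t}$ (as $n=tt'$). Thus Theorem \ref{th:polinv}, applied verbatim with prime power $Q:=q^t$, extension degree $t'$, and ground field $\F_Q=\F_{q^t}$, gives that $g_{\mathbf{a}}(x)$ is invertible if and only if
\[ g_{\mathbf{a}}(x)=\sum_{i=0}^{t'-1}\left(\alpha_0+\alpha^{q^{it}}\alpha_1+\ldots+\alpha^{(t'-1)q^{it}}\alpha_{t'-1}\right)x^{q^{it}}, \]
where $\alpha$ is any primitive element of $\fqn$ (a generator of $\fqn^*$, the same notion regardless of the chosen base field) and $\{\alpha_0,\ldots,\alpha_{t'-1}\}$ is any $\F_{q^t}$-basis of $\fqn$. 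Composing with $\varphi$, that is replacing each $x^{q^{it}}$ by $x^{q^{it+s}}$, yields precisely the claimed shape of $f(x)$. For the count, the same application of Theorem \ref{th:polinv} shows that the number of invertible $\F_{q^t}$-linearized polynomials over $\F_{(q^t)^{t'}}$ equals $\prod_{i=0}^{t'-1}\left((q^t)^{t'}-(q^t)^i\right)=(q^n-1)(q^n-q^t)\cdots(q^n-q^{(t'-1)t})$, and since $(t'-1)t=n-t$ this is $(q^n-1)(q^n-q^t)\cdots(q^n-q^{n-t})$; by the bijection $g_{\mathbf{a}}\mapsto f$, the R-$q^t$-partially scattered polynomials of the form \eqref{eq:form} are enumerated by the same product.

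I expect no serious obstacle here, as the argument is a direct transfer of Theorem \ref{th:polinv} to the ground field $\F_{q^t}$. The only points that demand care are verifying that $g_{\mathbf{a}}\mapsto f$ is genuinely a bijection at the level of the coefficient data (which rests on the distinctness of the exponents $it+s$ modulo $n$, itself ensured by $\gcd(s,t)=1$ and $n=tt'$), and correctly performing the substitution $Q=q^t$ in both the structural description and the counting formula, in particular recognising the identity $(t'-1)t=n-t$.
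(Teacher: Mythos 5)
Your proposal is correct and follows essentially the same route as the paper's proof: reduce via Proposition \ref{prop:family} to the invertibility of $g_{\mathbf{a}}(x)$, apply Theorem \ref{th:polinv} with ground field $\F_{q^t}$ and extension degree $t'$, and use the coefficient-level bijection $g_{\mathbf{a}}\mapsto g_{\mathbf{a}}\circ\varphi$ to transfer both the structural description and the count $(q^n-1)(q^n-q^t)\cdots(q^n-q^{n-t})$. If anything, you are more explicit than the paper about the substitution $Q=q^t$ and the distinctness of the exponents $it+s$ modulo $n$, which is a welcome clarification rather than a deviation.
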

\begin{proof}
By Proposition \ref{prop:family}, the problem of determining which linearized polynomials of Form \eqref{eq:form} are R-$q^t$-partially scattered may be translated in determining the invertible $\F_{q^t}$-linearized polynomials $g_{\mathbf{a}}(x)$ over $\fqn$. 
By Theorem \ref{th:polinv}, $g_{\mathbf{a}}(x)$ is invertible if and only if 
\[ g_{\mathbf{a}}(x)=\sum_{i=0}^{n-1}(\alpha_0+\alpha^{q^i}\alpha_1+\ldots+\alpha^{(n-1)q^i}\alpha_{n-1})x^{q^i}, \]
for some primitive element $\alpha$ in $\fqn$ and $\F_{q^t}$-basis $\{\alpha_0,\ldots,\alpha_{t'-1}\}$ of $\fqn$.
Also, two linearized polynomials $f(x)=\sum_{i=0}^{t'-1} a_i x^{q^{s+it}}$ and $h(x)=\sum_{i=0}^{t'-1} b_i x^{q^{s+it}}$ of the form \eqref{eq:form} coincide if and only if $a_i=b_i$ for every $i \in \{0,\ldots,t'-1\}$, so that the last part of the assertion follows by \eqref{eq:num}.
\end{proof}

For $\fq$-linearized binomials of the form \eqref{eq:form} with $t'=2$, we may use the results in \cite{PZZ} to get examples of linearized binomials which are R-$q^t$-partially scattered but not scattered.

\begin{proposition}
Let $n=2t$, for some $t \in \mathbb{N}$ and let $s\in \mathbb{N}$ be such that $\gcd(s,t)=1$. If 
\[ n\geq \left\{ \begin{array}{ll} 
4s+2 & \text{if}\,\,q=3\,\,\text{and}\,\, s>1,\,\,\text{or}\,\, q=2\,\,\text{and}\,\, s>2,\\
4s+1 & \text{otherwise},
\end{array} \right. \]
then the linearized polynomial
\[ f(x)=\delta x^{q^s}+x^{q^{t+s}}\in \fqn[x], \]
with $\N_{q^{2t}/q^t}(\delta)\ne 1$, is R-$q^t$-scattered but not scattered.
\end{proposition}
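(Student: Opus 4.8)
The plan is to prove the two assertions separately: first that $f(x)=\delta x^{q^s}+x^{q^{t+s}}$ is R-$q^t$-partially scattered precisely when $\N_{q^{2t}/q^t}(\delta)\neq 1$, and second that under the stated numerical hypotheses on $n$ it fails to be scattered. For the first part I would simply invoke the results already established in this section. Writing $f(x)=(g_{\mathbf{a}}\circ\varphi)(x)$ with $\varphi(x)=x^{q^s}$ and $g_{\mathbf{a}}(x)=\delta x+x^{q^t}$, Corollary \ref{cor:bin} (applied with $k=1$, so that $\gcd(k,t')=\gcd(1,2)=1$ and the relevant norm is $\N_{q^{2t}/q^t}$, after renaming the coefficient) gives that $f(x)$ is R-$q^t$-partially scattered if and only if $\N_{q^{2t}/q^t}(\delta)\neq 1$. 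Thus the R-side of the statement is immediate from the earlier classification.

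The substance of the proposition lies in showing that $f(x)$ is \emph{not} scattered. Since $f(x)$ is R-$q^t$-partially scattered but we want to deny full scatteredness, by the remark that $f$ is scattered if and only if it is both L- and R-$q^t$-partially scattered, it suffices to exhibit that $f(x)$ is not L-$q^t$-partially scattered; equivalently, by Proposition \ref{prop:characterizationpartially}(2), to produce some $\rho\in\mathbb{F}_{q^n}\setminus\mathbb{F}_{q^t}$ for which $f_\rho(x)=f(\rho x)-\rho f(x)$ fails to be bijective over $\fqn$. The cleaner route, and the one suggested by the citation to \cite{PZZ}, is to recognize $f(x)=\delta x^{q^s}+x^{q^{t+s}}$ as a linearized binomial whose scatteredness has already been analyzed there. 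The plan is to translate the numerical condition of the proposition into the precise non-scatteredness criterion proved in \cite{PZZ}: those results give, for binomials of this exact exponent shape, a threshold on $n$ (relative to $s$, with the exceptional small-characteristic cases $q=3,s>1$ and $q=2,s>2$ absorbing the $+2$ instead of $+1$) beyond which the binomial cannot be scattered. I would quote the relevant theorem from \cite{PZZ} and check that the hypotheses $n\geq 4s+2$ or $n\geq 4s+1$ match its statement verbatim.

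Concretely, I expect the argument to proceed by reducing $f$ to the normalized binomial form treated in \cite{PZZ} (monic, with the appropriate normalization of $s$ modulo $t$ using $\gcd(s,t)=1$), and then applying their algebraic-geometric bound: the associated curve $\cC$ of Lemma \ref{lemma:importante} (for index $\ell=0$) contains an absolutely irreducible $\fqn$-rational component, and by Hasse--Weil it acquires an $\fqn$-rational point off the forbidden lines once $q^n$ is large enough compared to the degree, which is governed by $s$. The inequality in the statement is exactly the book-keeping that makes this Hasse--Weil count go through, with the two small-characteristic exceptions arising from the finer analysis of \cite{PZZ} where the generic degree bound needs to be sharpened by one.

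The main obstacle I anticipate is \emph{not} the R-side, which is formal, but verifying that the exponents of our binomial match those for which \cite{PZZ} proves non-scatteredness, since their results are typically stated for a canonical binomial $x^{q^s}+\delta x^{q^{s+n/2}}$ over $\fqn$ with a specific normalization. Here $n=2t$, so $t+s=s+n/2$, and the binomial $\delta x^{q^s}+x^{q^{t+s}}$ is, up to multiplying by $\delta^{-1}$ and relabeling, of precisely that canonical shape. I would therefore spend the care on confirming this identification and on matching the condition $\N_{q^{2t}/q^t}(\delta)\neq 1$ (which guarantees R-partial scatteredness, hence that the only possible failure is on the L-side) with the regime in which \cite{PZZ} guarantees a non-trivial L-collision. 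Once the dictionary between our notation and theirs is fixed, the conclusion that $f$ is R-$q^t$-partially scattered but not scattered follows at once.
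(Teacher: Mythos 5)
Your proposal is correct and follows essentially the same route as the paper: the paper also gets the R-side immediately from Corollary \ref{cor:bin} and the failure of scatteredness by citing \cite[Theorem 4.1]{PZZ}, which under exactly the stated numerical hypotheses yields $m\in\fqn^*$ with $\dim_{\F_q}(\ker(f(x)-mx))=2$, i.e.\ a collision $f(y)/y=f(z)/z$ with $y/z\notin\fq$. Your detour through L-$q^t$-partial scatteredness is unnecessary but harmless, and the dictionary you worry about (matching $t+s=s+n/2$ and the norm condition, using $\N_{q^{2t}/q^t}(-\delta)=\N_{q^{2t}/q^t}(\delta)$) works out exactly as you anticipate.
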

\begin{proof}
By \cite[Theorem 4.1]{PZZ}, it follows the existence of $m \in \fqn^*$ such that
\[ \dim_{\F_q}(\ker(f(x)-mx))=2, \]
which implies the existence of $y,z \in \fqn^*$ such that $y/z \notin \fq$ and 
\[ \frac{f(y)}y=m=\frac{f(z)}z, \]
that is $f(x)$ is not scattered.
The assertion then follows by Corollary \ref{cor:bin}.
\end{proof}

\begin{remark}
The family presented in Proposition \ref{prop:family} contains the examples of R-$q^t$-partially scattered given in \cite{LZ}:
\begin{itemize}
    \item $f(x)=\delta x^{q^s}+x^{q^{t+s}} \in \F_{q^{2t}}[x]$ with $\gcd(2t,s)=1$ and $\N_{q^{2t}/q^t}(\delta)\ne 1$, see \cite[Proposition 2.12]{LZ};
    \item $f(x)=x^{q^s}+x^{q^{t+s}}+\delta x^{q^{2t+s}}\in \F_{q^{3t}}$ with $\gcd(3t,s)=1$ and $\mathrm{Tr}_{q^{3t}/q^t}(\delta)-\N_{q^{3t}/q^t}(\delta)\ne 2$, see \cite[Proposition 2.16]{LZ};
    \item $f(x)=x^{q^s}+x^{q^{t+s}}+x^{q^k}-x^{q^{t+k}}\in \F_{q^{2t}}[x]$ with $q$ odd, $\gcd(s,2t)=\gcd(k,2t)=1$ and $0\leq s,k\leq 2t-1$, see \cite[Proposition 2.18]{LZ}. This family generalizes the example in \cite{LZ2}, which for $2t=6$ may be rewritten as in \eqref{eq:form}, see \cite[Proposition 3.9]{BZZ} and see also \cite{ZZ}.
\end{itemize}
\end{remark}

\begin{remark}
The family of R-$q^t$-partially scattered polynomials introduced in this section is closed under the adjoint operation. Indeed, let \[ f(x)=a_0 x^{q^s}+a_1 x^{q^{t+s}}+\ldots+a_{t'-1}x^{q^{(t'-1)t+s}} \in \fqn[x] \]
be such that $g_{\mathbf{a}}(x)=a_0x+a_1x^{q^t}+\ldots+a_{t'-1}x^{q^{(t'-1)t}}$ is invertible.
Then 
\[ \hat{f}(x)=a_0^{q^{n-s}}x^{q^{n-s}}+a_1^{q^{t(t'-1)-s}}x^{q^{t(t'-1)-s}}+\ldots+a_{t'-1}^{q^{t-s}}x^{q^{t-s}}, \]
see \eqref{eq:adj}.
Define $s'=t-s$, then $\gcd(s',t)=1$ and
\[ \hat{f}(x)=a_{t'-1}^{q^{s'}} x^{q^{s'}}+a_{t'-2}^{q^{t+s'}}x^{q^{t+s'}}+\ldots+a_0^{q^{t(t'-1)+s'}}x^{q^{t(t'-1)+s'}}, \]
hence $\hat{f}(x)$ is of the form \eqref{eq:form}, because of Proposition \ref{prop:adj}.
\end{remark}

\section{A geometric description}\label{sec:pseudo}

A point set $L$ of $\Lambda=\PG(V,\F_{q^n})\allowbreak=\PG(r-1,q^n)$ is said to be an \emph{$\F_q$-linear set} of $\Lambda$ of rank $k$ if it is defined by the non-zero vectors of a $k$-dimensional $\F_q$-vector subspace $U$ of $V$, i.e.
\[L=L_U:=\{\la {\bf u} \ra_{\mathbb{F}_{q^n}} : {\bf u}\in U\setminus \{{\bf 0} \}\}.\]
For any subspace $S=\PG(Z,\mathbb{F}_{q^n})$ of $\Lambda$, the \emph{weight} of $S$ in $L_U$ is defined as $w_{L_U}(S)=\dim_{\mathbb{F}_q}(U\cap Z)$.
We also recall that two linear sets $L_U$ and $L_W$ of $\PG(r-1,q^n)$ are said to be \emph{$\mathrm{P\Gamma L}$-equivalent} (or simply \emph{equivalent}) if there is an element $\varphi$ in $\mathrm{P\Gamma L}(r,q^n)$ such that $L_U^{\varphi} = L_W$.
An important family of linear set is given by the so called \emph{scattered linear sets}, that is linear sets defined by scattered subspaces.
Clearly, the rank of a scattered linear set is bounded above by $\frac{rn}2$.
For further details on linear sets see \cite{LVdV2015,Polverino}.

In \cite{LMPT:14}, generalizing \cite{LMPT2011,LVdV,MPT},  a class of scattered linear sets of maximum rank in $\mathrm{PG}(t'-1,q^t)$ was presented.
Let $L_U$ be a scattered $\mathbb{F}_q$-linear set of $\Lambda=\mathrm{PG}(2t'-1,q^t)$ of rank $tt'$, with $t,t'\geq 2$.
We say that $L_U$ is of \emph{pseudoregulus type} if 
\begin{itemize}
    \item there exist $m=\frac{q^{tt'}-1}{q^t-1}$ pairwise disjoint lines  $s_1,\ldots,s_m$ of $\Lambda$ such that $w_{L_U}(s_i)=t$ for every $i \in \{1,\ldots,m\}$;
    \item there exist exactly two $(t'-1)$-dimensional subspaces $T_1$ and $T_2$ of $\Lambda$ disjoint from $L_U$ and such that $T_j\cap s_i\ne \emptyset$, for every $j \in \{1,2\}$ and $i \in \{1,\ldots,m\}$.
\end{itemize}

The set $\mathcal{P}_{L_U}=\{s_1,\ldots,s_m\}$ is called \emph{pseudoregulus} of $\Lambda$ associated with $L_U$, and $T_1$ and $T_2$ are called \emph{transversal spaces of} $\mathcal{P}_{L_U}$.
See \cite{NPZZ} for a generalization.
In \cite{LMPT:14}, an algebraic characterization of linear sets of pseudoregulus type has been provided.

\begin{theorem}\cite[Theorem 3.5]{LMPT:14}\label{th:pseudo}
Let $T_1=\mathrm{PG}(U_1,\mathbb{F}_{q^t})$ and $T_2=\mathrm{PG}(U_2,\mathbb{F}_{q^t})$ be two disjoint $(t'-1)$-subspaces of $\Lambda=\mathrm{PG}(2t'-1,q^t)$ and let $\phi_f$ be the strictly semilinear collineation between $T_1$ and $T_2$ defined by an invertible $\mathbb{F}_{q^{t}}$-semilinear map $f$ with companion automorphism $\sigma \in \mathrm{Aut}(\mathbb{F}_{q^t})$ such that $\mathrm{Fix}(\sigma)=\mathbb{F}_q$.
Then for each $\rho \in \mathbb{F}_{q^t}^*$, the set
\[ L_{\rho,f}=\{\langle \mathbf{u}+\rho f(\mathbf{u}) \rangle_{\mathbb{F}_{q^t}} \colon \mathbf{u}\in U_1\setminus\{\mathbf{0}\}\} \]
is an $\mathbb{F}_q$-linear set of pseudoregulus type of $\Lambda$ whose associated pseudoregulus is $\mathcal{P}_{L_{\rho,f}}=\{\langle P,P^{\phi_f} \rangle \colon P \in T_1 \}$ and whose transversal spaces are $T_1$ and $T_2$.
Conversely, each $\F_q$-linear set of pseudoregulus type of $\Lambda$ can be obtained as above.
\end{theorem}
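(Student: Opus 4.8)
The plan is to prove both implications by working at the level of the underlying $\mathbb{F}_{q^t}$-vector space $V$ of $\Lambda$, writing $V=U_1\oplus U_2$ (the two transversal subspaces are complementary, since $T_1,T_2$ are disjoint $(t'-1)$-subspaces of $\mathrm{PG}(2t'-1,q^t)$) and identifying $f$ with an invertible $\mathbb{F}_{q^t}$-semilinear map $U_1\to U_2$ with companion automorphism $\sigma$. For the direct statement, I would first record that $W=\{\mathbf{u}+\rho f(\mathbf{u})\colon \mathbf{u}\in U_1\}$ is an $\mathbb{F}_q$-subspace of $\mathbb{F}_q$-dimension $tt'$: additivity of $f$ gives closure under $+$, while $\mathrm{Fix}(\sigma)=\mathbb{F}_q$ gives closure under $\mathbb{F}_q$-scalars, and injectivity of $\mathbf{u}\mapsto\mathbf{u}+\rho f(\mathbf{u})$ (whose kernel is trivial since $\mathbf{u}\in U_1$ and $\rho f(\mathbf{u})\in U_2$ meet only in $\mathbf{0}$) fixes the dimension.

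Then I would check scatteredness directly: if $\mathbf{u}_2+\rho f(\mathbf{u}_2)=\mu(\mathbf{u}_1+\rho f(\mathbf{u}_1))$ with $\mu\in\mathbb{F}_{q^t}^*$, projecting onto $U_1$ and $U_2$ yields $\mathbf{u}_2=\mu\mathbf{u}_1$ and $\sigma(\mu)f(\mathbf{u}_1)=\mu f(\mathbf{u}_1)$, whence $\mu\in\mathrm{Fix}(\sigma)=\mathbb{F}_q$; this is exactly the statement that every point of $L_{\rho,f}$ has weight $1$. For the pseudoregulus lines, writing $P=\langle \mathbf{u}\rangle$ and $s=\langle \mathbf{u},f(\mathbf{u})\rangle$, the same component comparison shows $W\cap\langle \mathbf{u},f(\mathbf{u})\rangle=\{a\mathbf{u}+\rho f(a\mathbf{u})\colon a\in\mathbb{F}_{q^t}\}$, which is $t$-dimensional over $\mathbb{F}_q$, so $w_{L_{\rho,f}}(s)=t$; injectivity of $f$ makes distinct lines disjoint, and there are $m=\frac{q^{tt'}-1}{q^t-1}$ of them as $P$ runs over $T_1$. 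That $T_1,T_2$ are disjoint from $L_{\rho,f}$ (a point in $T_j$ would force $f(\mathbf{u})=\mathbf{0}$) and meet every $s_i$ is then immediate.

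For the converse I would start from an $\mathbb{F}_q$-linear set of pseudoregulus type with transversal spaces $T_1=\mathrm{PG}(U_1,\mathbb{F}_{q^t})$, $T_2=\mathrm{PG}(U_2,\mathbb{F}_{q^t})$ and defining subspace $U$. Disjointness of $L_U$ from $T_1$ and $T_2$ makes both projections $\pi_1,\pi_2\colon U\to U_i$ injective, hence bijective by dimension, so $U=\{\mathbf{u}+g(\mathbf{u})\colon \mathbf{u}\in U_1\}$ for an $\mathbb{F}_q$-linear bijection $g=\pi_2\circ(\pi_1|_U)^{-1}$. The weight-$t$ condition on each line $s_i=\langle \mathbf{u}_i,\mathbf{w}_i\rangle$ forces $g(\langle \mathbf{u}_i\rangle_{\mathbb{F}_{q^t}})\subseteq\langle \mathbf{w}_i\rangle_{\mathbb{F}_{q^t}}$; as $\langle \mathbf{u}_i\rangle$ ranges over all points of $T_1$, the additive bijection $g$ sends every $\mathbb{F}_{q^t}$-point of $U_1$ to an $\mathbb{F}_{q^t}$-point of $U_2$, so by the fundamental theorem of projective geometry $g$ is $\mathbb{F}_{q^t}$-semilinear with some companion automorphism $\sigma$. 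Repeating the weight computation of the direct part shows $L_U$ is scattered if and only if $\mathrm{Fix}(\sigma)=\mathbb{F}_q$, which holds here; taking $f=\rho^{-1}g$ for a fixed $\rho\in\mathbb{F}_{q^t}^*$ then exhibits $L_U=L_{\rho,f}$, with $\phi_f$ the collineation $P_i\mapsto P_i'$ read off the pseudoregulus.

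The step I expect to be the main obstacle is proving that the transversal spaces are exactly $T_1$ and $T_2$, i.e. the uniqueness clause in the definition of pseudoregulus type. Here I would first use a counting fact: a $(t'-1)$-space $T$ meeting all $m$ pairwise disjoint lines $s_i$ has exactly $m$ points, so it meets each line in precisely one point and every point of $T$ lies on some $s_i$. Writing $\tilde T$ for the corresponding $\mathbb{F}_{q^t}$-subspace, I would split on whether $\pi_1|_{\tilde T}$ is injective: if it is, then $\tilde T=\{\mathbf{u}+g_T(\mathbf{u})\}$ with $g_T$ now $\mathbb{F}_{q^t}$-linear, transversality forces $g_T=\kappa f$, and comparing the linear $g_T$ with the strictly semilinear $f$ forces $\kappa=0$, so $T=T_1$; symmetrically, injectivity of $\pi_2|_{\tilde T}$ gives $T=T_2$. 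The delicate case is when both projections have nontrivial kernel: then $\tilde T$ meets both $U_1$ and $U_2$, and choosing points $\langle \mathbf{u}_i\rangle,\langle f(\mathbf{u}_j)\rangle\in\tilde T$ one argues that either $i=j$, forcing $s_i\subseteq T$ and hence $T\cap L_U\neq\emptyset$, or $i\neq j$, in which case the line $\langle \mathbf{u}_i,f(\mathbf{u}_j)\rangle\subseteq T$ contains points lying on no $s_k$ — both contradicting what was established above. Handling this case analysis cleanly, together with the invocation of the fundamental theorem of projective geometry in the borderline dimension $t'=2$, is where I expect the real work to lie.
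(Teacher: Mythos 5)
You should first note that the paper does not prove this statement at all: it is imported verbatim from \cite{LMPT:14} (Theorem 3.5 there), so there is no internal argument to compare yours against, and your proof has to be judged on its own merits. On that basis it is correct, and it follows the natural route: decompose $V=U_1\oplus U_2$, write the defining subspace as a graph, and exploit the tension between $\mathbb{F}_{q^t}$-linearity and strict semilinearity. The direct part is a complete routine verification. The two genuinely delicate points are handled soundly. For uniqueness of the transversal spaces, your counting observation forces any candidate $T$ to meet each $s_i$ exactly once and to be covered by the $s_i$; then, in the case where $\pi_1|_{\tilde T}$ is injective, the relation $g_T(\mathbf{u})=c_{\mathbf{u}}f(\mathbf{u})$ with $g_T$ being $\mathbb{F}_{q^t}$-linear and $f$ strictly semilinear indeed kills $g_T$ (constancy of $c_{\mathbf{u}}$ via sums of independent vectors, then $\sigma\neq\mathrm{id}$, which holds since $\mathrm{Fix}(\sigma)=\mathbb{F}_q$ and $t\geq 2$), while in the doubly non-injective case one gets a contradiction either with disjointness from $L$ (when the two special points lie on the same $s_i$, so $s_i\subseteq T$ but $w_{L}(s_i)=t>0$) or with the covering property (when they lie on different lines, since $\langle \mathbf{u}_i+f(\mathbf{u}_j)\rangle$ lies on no $s_k$). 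In the converse, the single gap to fill is exactly the one you flag yourself: the fundamental theorem of projective geometry needs projective dimension at least two, i.e.\ $t'\geq 3$, so for $t'=2$ you must argue directly; but since your $g$ is already additive and maps $\mathbb{F}_{q^t}$-points onto $\mathbb{F}_{q^t}$-points, the standard additive argument (compare the scaling functions of $g$ at $\mathbf{e}_1$, $\mathbf{e}_2$ and $\mathbf{e}_1+\mathbf{e}_2$ to obtain a single companion automorphism, then check it is multiplicative) closes this case. Finally, do record explicitly that $\mathbb{F}_q\subseteq\mathrm{Fix}(\sigma)$ because $g$ is $\mathbb{F}_q$-linear, while scatteredness (which is part of the definition of pseudoregulus type) gives the reverse inclusion; with these details written out, your proof is complete and self-contained.
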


Now, let $\mathbb{V}=\mathbb{F}_{q^{tt'}}\times\mathbb{F}_{q^{tt'}}$, which can be seen simultaneously as both a $2$-dimensional $\mathbb{F}_{q^{tt^\prime}}$-vector space and a $2t^\prime$-dimensional $\mathbb{F}_{q^t}$-vector space.
Consider $U_1=\{(x,0) \colon x \in \F_{q^{tt'}}\}=V(t',q^t)$ and $U_2=\{ (0,y)  \colon y \in \F_{q^{tt'}}\}=V(t^\prime,q^t)$, and let $T_i=\PG(U_i,\mathbb{F}_{q^t})=\PG(t^\prime-1,q^t)$.
By Theorem \ref{th:pseudo}, the $\F_q$-linear sets of pseudoregulus type in $\Lambda=\PG(\mathbb{V},\mathbb{F}_{q^t})=\PG(2t'-1,q^t)$ with transversal spaces $T_1$ and $T_2$ are exactly those of the form
\[ L_f=\{ \langle (x,f(x)) \rangle_{\mathbb{F}_{q^t}} \colon x \in \F_{q^{tt'}}^* \}, \]
where $f(x)$ is a strictly $\F_{q^t}$-semilinear function of $\F_{q^{tt'}}$ with companion automorphism $\sigma\in \mathrm{Aut}(\F_{q^{t}})$ with $\mathrm{Fix}(\sigma)=\F_q$, that is
\[ f(x)=\sum_{i=0}^{t'-1} a_i x^{\sigma q^{it}} \in \mathbb{F}_{q^{tt'}}[x], \]
where $\sigma\colon x \in \F_{q^{tt'}}\mapsto x^{q^s} \in \F_{q^{tt'}}$ with $\gcd(s,t)=1$ and $f(x)$ is invertible.

Therefore, using Proposition \ref{prop:family}, the following holds.

\begin{corollary}
Let $n=t t'$, for some $t,t'\in\mathbb{N}$, and let 
\[ f(x)=\sum_{i=0}^{t'-1} a_i x^{\sigma q^{it}} \in \mathbb{F}_{q^{n}}[x], \]
where $\sigma\colon x \in \F_{q^{n}}\mapsto x^{q^s} \in \F_{q^{n}}$ with $\gcd(s,t)=1$ and $f(x)$.
Then $f(x)$ is R-$q^t$-partially scattered if and only if
\[ L_f=\{ \langle (x,f(x)) \rangle_{\mathbb{F}_{q^t}} \colon x \in \F_{q^{tt'}}^* \} \subseteq \Lambda=\mathrm{PG}(2t'-1,q^t) \]
is an $\fq$-linear set of pseudoregulus type.
\end{corollary}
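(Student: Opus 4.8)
The plan is to recognize $f(x)$ as an instance of Form \eqref{eq:form} and then use Proposition \ref{prop:family} to reduce R-$q^t$-partial scatteredness to the invertibility of $f$, matching the latter with the pseudoregulus-type condition via Theorem \ref{th:pseudo}.

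First I would record the semilinear structure of $f$. For $\lambda\in\F_{q^t}$ and $x\in\fqn$ one has $\lambda^{q^{it}}=\lambda$, whence $f(\lambda x)=\sum_i a_i\lambda^{q^{it+s}}x^{q^{it+s}}=\lambda^{q^s}f(x)$. Thus $f$ is $\F_{q^t}$-semilinear with companion automorphism $\sigma\colon y\mapsto y^{q^s}$; since $\gcd(s,t)=1$, its fixed field inside $\F_{q^t}$ is $\F_{q^{\gcd(s,t)}}=\fq$, and as $t\geq2$ the automorphism $\sigma$ is nontrivial, so $f$ is strictly semilinear. This is exactly the kind of map to which Theorem \ref{th:pseudo} applies, with transversal spaces $T_1=\PG(U_1,\F_{q^t})$ and $T_2=\PG(U_2,\F_{q^t})$.

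For the forward implication, Proposition \ref{prop:family} gives that if $f$ is R-$q^t$-partially scattered then $f$ is invertible over $\fqn$; being then an invertible strictly $\F_{q^t}$-semilinear map with $\mathrm{Fix}(\sigma)=\fq$, $f$ falls under the hypotheses of Theorem \ref{th:pseudo}, so $L_f$ is an $\fq$-linear set of pseudoregulus type. For the converse I would route through scatteredness rather than matching transversal spaces: a linear set of pseudoregulus type is by definition scattered of maximum rank, so if $L_f$ is of pseudoregulus type then the defining subspace $U_f=\{(x,f(x))\colon x\in\fqn\}$ is scattered with respect to the Desarguesian $\F_{q^t}$-spread of $\fqn\times\fqn$; by Theorem \ref{th:scatteredbig} this is precisely the assertion that $f$ is R-$q^t$-partially scattered, which closes the equivalence.

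Since the argument is essentially a chaining of earlier results, there is no serious analytic obstacle; the only points requiring care are conceptual. One must verify that $f$ of the prescribed shape is genuinely strictly semilinear with fixed field $\fq$ (done above), and one must handle the apparent asymmetry whereby the forward direction produces the specific transversals $T_1,T_2$ while the converse only assumes pseudoregulus type with unspecified transversals. This asymmetry is harmless, because scatteredness is an intrinsic property of the linear set and is independent of the choice of transversal spaces; routing the converse through Theorem \ref{th:scatteredbig} rather than through the parametrization of Theorem \ref{th:pseudo} avoids the issue entirely.
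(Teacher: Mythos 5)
Your proposal is correct, and its forward direction coincides with the paper's: Proposition \ref{prop:family} gives invertibility of $f$, and the direct part of Theorem \ref{th:pseudo} (applied with $U_1=\{(x,0)\colon x\in\fqn\}$, $U_2=\{(0,y)\colon y\in\fqn\}$ and $\rho=1$) then gives that $L_f$ is of pseudoregulus type. Where you genuinely diverge is the converse. The paper derives it from the characterization stated just before the corollary, namely that the pseudoregulus-type sets with transversal spaces $T_1,T_2$ are precisely the sets $L_g$ with $g$ invertible and strictly $\F_{q^t}$-semilinear, so that pseudoregulus type forces invertibility of $f$ and Proposition \ref{prop:family} concludes; read literally, this needs the two transversal spaces of $L_f$ to be identified with $T_1,T_2$ and the parametrizing map with $f$ itself, which is exactly the asymmetry you flagged. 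Your converse instead uses only that pseudoregulus type presupposes, by definition, that the defining subspace $U_f$ is scattered with respect to the Desarguesian $\F_{q^t}$-spread, and then invokes Theorem \ref{th:scatteredbig}; this is intrinsic to the linear set and sidesteps the transversal identification entirely. The paper itself records this very chain in the remark immediately following the corollary (there phrased as an alternative proof of one implication of Proposition \ref{prop:family}), so your argument is a legitimate and slightly more robust reassembly of ingredients the paper already has. One small point: the notion of pseudoregulus type is only defined for $t,t'\geq2$, so your use of $t\geq2$ to ensure that $\sigma$ is nontrivial on $\F_{q^t}$ (hence that $f$ is strictly semilinear) is implicit in the statement rather than an additional restriction.
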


\begin{remark}
If $f(x)$ is as in \eqref{eq:form}, by Theorem \ref{th:pseudo}, $L_f$ is a scattered $\F_q$-linear set of rank $tt'$ in $\mathrm{PG}(2t'-1,q^t)$, and hence, by Theorem \ref{th:scatteredbig}, $f(x)$ is R-$q^t$-partially scattered.
This in an alternative proof of one implication in Proposition \ref{prop:family}.
\end{remark}

%If the stronger assumption $\gcd(s,tt')=1$ is made, this corollary is a consequence of Proposition \ref{prop:family}.

\section{Equivalence issue}\label{sec:equiv}

We start by defining a natural equivalence between two linearized polynomials. 
Let $f(x)$ and $g(x)$ be two $\F_q$-linearized polynomials over $\fqn$ and consider the two $\fq$-subspaces
\[U_f=\{ (x,f(x)) \colon x \in \fqn \}\qquad \mbox{and} \qquad U_g=\{ (x,g(x)) \colon x \in \fqn \}\]
of $\fqn\times\fqn$.
We say that $f(x)$ and $g(x)$ are \emph{equivalent} if there exists $\varphi \in \mathrm{\Gamma L}(2,q^n)$ such that $U_f^\varphi=U_g$, that is, there exist $A\in \mathrm{GL}(2,q^n)$ and $\sigma \in \mathrm{Aut}(\fqn)$ with the property that for each $x \in \fqn$ there exists $y \in \fqn$ satisfying
\[ A \left( \begin{array}{cc} x^\sigma\\ f(x)^\sigma \end{array} \right)=\left( \begin{array}{cc} y\\ g(y) \end{array} \right), \]
see \cite[Section 1]{BMZZ} and \cite[Section 1]{CsMPq5}.

This definition of equivalence preserves the property of being R-$q^t$-partially scattered.

\begin{proposition}
Let $f(x)$ and $g(x)$ be two equivalent $\fq$-linearized polynomials of $\F_{q^n}[x]$.
If $f(x)$ is R-$q^t$-partially scattered, then $g(x)$ is R-$q^t$-partially scattered.
\end{proposition}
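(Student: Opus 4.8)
The plan is to reduce everything to the geometric characterization of the R-$q^t$-partially scattered property provided by Theorem~\ref{th:scatteredbig}, and then to exploit the fact that the group $\mathrm{\Gamma L}(2,q^n)$ sits naturally inside the larger semilinear group acting on $\fqn\times\fqn$ viewed as an $\F_{q^t}$-vector space, so that the equivalence map transfers scatteredness for free.

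First I would rewrite both the hypothesis and the conclusion via Theorem~\ref{th:scatteredbig}: the polynomial $f(x)$ is R-$q^t$-partially scattered if and only if $U_f=\{(x,f(x))\colon x\in\fqn\}$ is a scattered $\F_q$-subspace of $\fqn\times\fqn$ regarded as an $\F_{q^t}$-vector space, i.e. $\dim_{\F_q}(U_f\cap S)\le 1$ for every member $S=\langle \mathbf{v}\rangle_{\F_{q^t}}$ of the associated Desarguesian $\F_{q^t}$-spread; the same equivalence holds for $g(x)$ and $U_g$. By the definition of equivalence we have $U_g=U_f^{\varphi}=\varphi(U_f)$, where $\varphi$ is the map $(u,v)\mapsto A\,(u^{\sigma},v^{\sigma})^{\mathsf{T}}$ with $A\in\mathrm{GL}(2,q^n)$ and $\sigma\in\mathrm{Aut}(\fqn)$.

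The crucial step is to observe that $\varphi$ has a double nature. Since every $\sigma\in\mathrm{Aut}(\fqn)$ is a power of the Frobenius $x\mapsto x^q$, it fixes $\F_q$ pointwise, so $\varphi$ is $\F_q$-linear; being an $\F_q$-linear bijection, $\varphi$ preserves the $\F_q$-dimension of every subspace of $\fqn\times\fqn$. On the other hand $\sigma$ stabilizes the subfield $\F_{q^t}$ (the unique subfield of $\fqn$ of that order) and restricts to an automorphism $\sigma|_{\F_{q^t}}\in\mathrm{Aut}(\F_{q^t})$; hence $\varphi$ is $\F_{q^t}$-semilinear, that is $\varphi\in\mathrm{\Gamma L}(2t',q^t)$ with $t'=n/t$. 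Being $\F_{q^t}$-semilinear and bijective, $\varphi$ permutes the Desarguesian $\F_{q^t}$-spread, because $\varphi(\langle \mathbf{v}\rangle_{\F_{q^t}})=\langle \varphi(\mathbf{v})\rangle_{\F_{q^t}}$ for every $\mathbf{v}\neq\mathbf{0}$.

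Putting these together finishes the argument. For any spread element $S$, write $S=\varphi(S')$ with $S'$ again a spread element; then $U_g\cap S=\varphi(U_f)\cap\varphi(S')=\varphi(U_f\cap S')$, and since $\varphi$ preserves $\F_q$-dimension we obtain $\dim_{\F_q}(U_g\cap S)=\dim_{\F_q}(U_f\cap S')\le 1$ from the scatteredness of $U_f$. Thus $U_g$ is scattered and, by Theorem~\ref{th:scatteredbig}, $g(x)$ is R-$q^t$-partially scattered. I expect the only genuinely delicate point to be the verification that the companion automorphism $\sigma$ simultaneously fixes $\F_q$ and stabilizes $\F_{q^t}$, so that $\varphi$ is at once $\F_q$-linear and $\F_{q^t}$-semilinear; once this change of scalars is made explicit, the transfer of the scattered property is automatic and requires no computation.
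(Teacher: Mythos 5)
Your strategy is the same as the paper's: use Theorem~\ref{th:scatteredbig} to convert R-$q^t$-partial scatteredness into scatteredness of $U_f$ inside $\fqn\times\fqn$ viewed as $V(2t',q^t)$, and then observe that the equivalence map $\varphi\in\GammaL(2,q^n)$ is also an element of $\GammaL(2t',q^t)$, hence permutes the Desarguesian $\F_{q^t}$-spread and carries the scattered subspace $U_f$ onto $U_g$. The paper compresses this into three lines; you unpack it. However, one step of your justification is wrong as stated.

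You assert that every $\sigma\in\mathrm{Aut}(\fqn)$ is a power of the map $x\mapsto x^q$ and therefore fixes $\F_q$ pointwise, making $\varphi$ an $\F_q$-linear map. This is true only when $q$ is prime. For $q=p^e$ with $e>1$, the group $\mathrm{Aut}(\fqn)$ is cyclic of order $en$, generated by $x\mapsto x^p$, and the powers of $x\mapsto x^q$ form a proper subgroup of index $e$; an automorphism such as $x\mapsto x^p$ stabilizes $\F_q$ only setwise, not pointwise, so $\varphi$ is in general not $\F_q$-linear. The gap is local and repairable by the same device you correctly use for $\F_{q^t}$: since $\sigma$ stabilizes $\F_q$ setwise ($\F_q$ being the unique subfield of its order), for any $\F_q$-subspace $U$, any $\lambda\in\F_q$ and any $u\in U$ one has $\lambda\varphi(u)=\varphi\bigl(\sigma^{-1}(\lambda)u\bigr)\in\varphi(U)$, so $\varphi(U)$ is again an $\F_q$-subspace; and $\varphi$, being additive and injective, preserves cardinalities and hence $\F_q$-dimensions. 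With that substitution, your computation $\dim_{\F_q}(U_g\cap S)=\dim_{\F_q}(U_f\cap S')\le 1$ goes through verbatim and the proof is complete, matching the argument the paper leaves implicit.
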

\begin{proof}
Since $U_f$ and $U_g$ are $\Gamma\mathrm{L}(2,q^n)$-equivalent, $U_f$ and $U_g$ are also $\Gamma\mathrm{L}(2t',q^t)$-equivalent.
If $f(x)$ is R-$q^t$-partially scattered, Theorem \ref{th:scatteredbig} implies that $U_f$ is a scattered subspace of $\F_{q^n}\times\F_{q^n}=V(2t',q^t)$.
Therefore $U_g$ is a scattered subspace of $\F_{q^n}\times\F_{q^n}=V(2t',q^t)$, and hence  $g(x)$ is R-$q^t$-partially scattered by Theorem \ref{th:scatteredbig}.
\end{proof}

In order to establish whether two $\fq$-linearized polynomial are equivalent or not the following definition may help.
Let $f(x)$ be an $\F_q$-linearized polynomial, then we define the \emph{linear automorphism group} of $f(x)$ as follows
\[ \mathcal{G}(f)=\left\{ A \in \mathrm{GL}(2,q^n) \mid \forall x \in \fqn \ \ \exists y \in \fqn \colon A \left( \begin{array}{cc} x\\ f(x) \end{array} \right)=\left( \begin{array}{cc} y\\ f(y) \end{array} \right) \right\}.\]

The following lemma clearly holds.

\begin{lemma}\label{lemma:aut}
Let $f(x)$ and $g(x)$ be two $\F_q$-linearized polynomials over $\fqn$. If $f(x)$ and $g(x)$ are equivalent then $\mathcal{G}(f)$ and $\mathcal{G}(g)$ are isomorphic.
In particular, if $|\mathcal{G}(f)|\ne |\mathcal{G}(g)|$, then $f(x)$ and $g(x)$ are not equivalent.
\end{lemma}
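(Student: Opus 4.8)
The plan is to realize both $\mathcal{G}(f)$ and $\mathcal{G}(g)$ as the \emph{linear} stabilizers of the subspaces $U_f$ and $U_g$ inside $\mathrm{GL}(2,q^n)$, and to exhibit the claimed isomorphism as conjugation by the semilinear map witnessing the equivalence of $f$ and $g$. By definition, $\mathcal{G}(f)$ consists exactly of those $A\in\mathrm{GL}(2,q^n)$ with $A\cdot U_f=U_f$, and similarly for $g$. Since $f$ and $g$ are equivalent, there is $\varphi\in\mathrm{\Gamma L}(2,q^n)$ with $\varphi(U_f)=U_g$; writing $\varphi(\mathbf{v})=A_0\,\mathbf{v}^{\sigma}$ for a fixed $A_0\in\mathrm{GL}(2,q^n)$ and $\sigma\in\mathrm{Aut}(\fqn)$ (where $\mathbf{v}^{\sigma}$ denotes entrywise application of $\sigma$), this is precisely the content of the equivalence as stated.

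First I would define $\Psi\colon\mathcal{G}(f)\to\mathrm{GL}(2,q^n)$ by $\Psi(A)=\varphi A\varphi^{-1}$ and carry out the short computation showing $\Psi(A)=A_0\,A^{\sigma}\,A_0^{-1}$, where $A^{\sigma}$ is $A$ with $\sigma$ applied to each entry; in particular $\Psi(A)$ is genuinely $\mathbb{F}_{q^n}$-linear, i.e. it lies in $\mathrm{GL}(2,q^n)$. This is the only point requiring care: it amounts to the fact that $\mathrm{GL}(2,q^n)$ is normal in $\mathrm{\Gamma L}(2,q^n)$, so that conjugating a linear map by a semilinear one again yields a linear map. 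Next I would check that $\Psi$ lands in $\mathcal{G}(g)$: for $A\in\mathcal{G}(f)$ we have $\Psi(A)(U_g)=\varphi A\varphi^{-1}(U_g)=\varphi A(U_f)=\varphi(U_f)=U_g$, so $\Psi(A)$ stabilizes $U_g$.

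It then remains to verify that $\Psi$ is a group isomorphism. Since $\sigma$ is a field automorphism, entrywise application respects matrix multiplication, so $(AB)^{\sigma}=A^{\sigma}B^{\sigma}$ and $\Psi(AB)=A_0(AB)^{\sigma}A_0^{-1}=\Psi(A)\Psi(B)$; thus $\Psi$ is a homomorphism. Conjugation by $\varphi^{-1}$ gives, by the same computation, a homomorphism $\mathcal{G}(g)\to\mathcal{G}(f)$ which is a two-sided inverse of $\Psi$, so $\Psi$ is an isomorphism and $\mathcal{G}(f)\cong\mathcal{G}(g)$. Finally, the ``in particular'' clause is immediate: isomorphic finite groups have the same cardinality, so $|\mathcal{G}(f)|\neq|\mathcal{G}(g)|$ forces $f$ and $g$ to be inequivalent. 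I expect no substantial obstacle here; the whole argument is the standard observation that conjugation by a fixed element of $\mathrm{\Gamma L}(2,q^n)$ induces an isomorphism between the $\mathrm{GL}$-stabilizers of two subspaces lying in the same orbit.
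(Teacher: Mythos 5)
Your proof is correct, and it fills in exactly the standard argument the paper treats as self-evident: the paper offers no proof at all, stating only that the lemma ``clearly holds,'' and the intended justification is precisely your observation that conjugation by the semilinear map $\varphi$ carrying $U_f$ to $U_g$ restricts to an isomorphism between the $\mathrm{GL}(2,q^n)$-stabilizers of the two subspaces (the normality of $\mathrm{GL}(2,q^n)$ in $\mathrm{\Gamma L}(2,q^n)$ guaranteeing that the conjugate is again linear). The only micro-detail worth noting is that the paper's definition of $\mathcal{G}(f)$ literally asks $A\,U_f\subseteq U_f$, which coincides with your stabilizer condition $A\,U_f=U_f$ because $A$ is invertible and $U_f$ is finite.
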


The linear automorphism group has been determined in the following cases:
\begin{itemize}
\item $f(x)=x^{q^s}\in \fqn[x]$ with $\gcd(s,n)=1$, then $|\mathcal{G}(f)|=q^n-1$, see \cite[Section 6]{CMPZ};
\item $f(x)=\delta x^{q^s}+x^{q^{n(s-1)}}\in \fqn[x]$ with $\gcd(s,n)=1$ and $n\geq 4$, then $|\mathcal{G}(f)|=q^2-1$ if $n$ is even  and $|\mathcal{G}(f)|=q-1$ if $n$ is odd, see \cite[Section 6]{CMPZ};
\item $f(x)=\delta x^{q^s}+x^{q^{s+n/2}}\in \fqn[x]$ with $n$ even and $\gcd(s,n)=1$, then  $|\mathcal{G}(f)|=q^{n/2}-1$, see \cite[Corollary 5.2]{CMPZ};
\item $f(x)=x^q+x^{q^3}+\delta x^{q^5}\in \F_{q^6}[x]$ with $q$ odd and $\delta^2+\delta=1$, then $|\mathcal{G}(f)|=q^{2}-1$, see \cite[Proposition 5.2]{CsMZ2018} and \cite[Section 4.4.]{MMZ}.
\end{itemize}

Now we determine the linear automorphism group of the binomial over $\fqn$ given in Corollary \ref{cor:bin} .

\begin{proposition}\label{prop:groupbin}
Let $n=t t'$, for some $t,t' \in \mathbb{N}$ with $t>1$, and let $s,k>0$ be such that $\gcd(s,t)=1$ and $kt+s<n$. Let $\alpha \in \fqn^*$ be such that $\N_{q^n/q^{t\cdot\gcd(k,t^\prime)}}(-\alpha)\ne 1$. Denote by $G$ the group $$\left\{\left(\begin{array}{cc} 
a & 0\\
0 & a^{q^s}
\end{array}\right) \colon a\in \F_{q^{t\cdot\gcd(k,t')}}^*\right\}.$$
\begin{enumerate}
    \item $\mathcal{G}(x^{q^{kt+s}}+\alpha x^{q^s})\supset G$.
    \item Assume also that $t' \ne 2k$.  Then 
$\mathcal{G}(x^{q^{kt+s}}+\alpha x^{q^s})=G$.
\end{enumerate}
In particular, $|\mathcal{G}(x^{q^{kt+s}}+\alpha x^{q^s})|\geq q^{t\cdot\gcd(k,t')}-1$.
\end{proposition}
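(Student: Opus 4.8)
The plan is to prove the two containments in turn, writing $m:=kt+s$ and $e:=t\gcd(k,t')$ throughout, so that $f(x)=x^{q^m}+\alpha x^{q^s}$ with $0<s<m<n$. For Part (1), given $a\in\mathbb{F}_{q^e}^*$ I set $A=\mathrm{diag}(a,a^{q^s})$; applying $A$ to $(x,f(x))^{\top}$ yields $(ax,a^{q^s}f(x))^{\top}$, so $A\in\mathcal{G}(f)$ is equivalent to the identity $f(ax)=a^{q^s}f(x)$ for all $x$. Expanding, this reads $a^{q^m}=a^{q^s}$, i.e. $a^{q^{kt}}=a$, i.e. $a\in\mathbb{F}_{q^{\gcd(kt,n)}}=\mathbb{F}_{q^{e}}$, using $\gcd(kt,tt')=t\gcd(k,t')$. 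Hence $G\subseteq\mathcal{G}(f)$; since $a\mapsto A$ is injective we also get $|G|=q^e-1$, which already yields the final lower bound.

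For Part (2), let $A=\begin{pmatrix}a&b\\c&d\end{pmatrix}\in\mathcal{G}(f)$. The defining property forces $y=ax+bf(x)$ and $f(y)=cx+df(x)$, i.e. the identity
\[ f(ax+bf(x))=cx+df(x)\qquad\text{for all }x\in\fqn. \]
Using $\mathbb{F}_q$-linearity of $f$ together with $f(u)^{q^j}=u^{q^{m+j}}+\alpha^{q^j}u^{q^{s+j}}$, the left-hand side expands as
\[ f(ax+bf(x))=b^{q^m}x^{q^{2m}}+\big((b\alpha)^{q^m}+\alpha b^{q^s}\big)x^{q^{m+s}}+a^{q^m}x^{q^m}+\alpha^{1+q^s}b^{q^s}x^{q^{2s}}+\alpha a^{q^s}x^{q^s}, \]
while the right-hand side is $cx+dx^{q^m}+d\alpha x^{q^s}$. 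The argument then proceeds by reducing all exponents modulo $n$ and comparing coefficients as $\mathbb{F}_q$-linearized polynomials over $\fqn$.

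The crux is showing $b=0$, and I expect the bookkeeping of congruences modulo $n$ to be the main obstacle. I would concentrate on the two ``excess'' exponents $2m$ and $2s$, which carry coefficients $b^{q^m}$ and $\alpha^{1+q^s}b^{q^s}$, both nonzero as soon as $b\ne0$. Using $0<s<m<n$, $m=kt+s$, $\gcd(s,t)=1$ and $t>1$, one checks that neither $2m$ nor $2s$ is congruent mod $n$ to $s$ or to $m$, and that neither is congruent to $m+s$; moreover $2m\equiv0$ (resp. $2s\equiv0$) can hold only if $m=n/2$ (resp. $s=n/2$), and these cannot hold simultaneously since $s<m$. The single remaining possible collision is $2m\equiv 2s\pmod n$, which is equivalent to $n\mid 2kt$, hence to $t'\mid 2k$; as $kt+s<n$ forces $k<t'$, this occurs exactly when $t'=2k$, which is excluded by hypothesis. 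Consequently at least one of the exponents $2m,2s$ reduces to a value distinct from $0,s,m,m+s$ and from the other, so its coefficient must vanish, giving $b=0$. The decisive point is precisely that $t'\ne 2k$ is what keeps $2m$ and $2s$ from merging.

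Finally, with $b=0$ the identity collapses to $a^{q^m}x^{q^m}+\alpha a^{q^s}x^{q^s}=cx+dx^{q^m}+d\alpha x^{q^s}$. Since $0,s,m$ are three distinct exponents in $[0,n)$, comparing coefficients gives $c=0$ and $d=a^{q^m}=a^{q^s}$. The last equality yields $a^{q^{kt}}=a$, so $a\in\mathbb{F}_{q^e}^*$ and $d=a^{q^s}$, i.e. $A\in G$. Combined with Part (1), this proves $\mathcal{G}(f)=G$.
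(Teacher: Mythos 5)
Your proof is correct and takes essentially the same route as the paper's: expand the identity $f(ax+bf(x))=cx+df(x)$, reduce all exponents modulo $n$, and compare coefficients of the resulting linearized polynomials, with the collision $2(kt+s)\equiv 2s\pmod{n}$ (equivalently $t'=2k$) being the only obstruction to extracting $b=0$ from the exponents $2(kt+s)$ and $2s$. The sole difference is organizational: where the paper explicitly enumerates the degenerate collisions (each forcing $t=2$ and $t'\in\{s,\,k+s,\,2k+s\}$) and treats them case by case, you observe that $2(kt+s)\equiv 0$ and $2s\equiv 0$ cannot hold simultaneously, so at least one witness exponent always survives---a slightly more unified bookkeeping of the same argument.
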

\begin{proof}
The first part is straightforward. Now we show the second part.
Let $\left(\begin{array}{cc} 
a & b\\
c & d
\end{array}\right)\in \mathrm{GL}(2,q^n)$ have the property that for each $x \in \fqn$ there exists $y \in \fqn$ such that
\[ \left(\begin{array}{cc} 
a & b\\
c & d
\end{array}\right) \left(\begin{array}{cc} 
x\\
x^{q^{kt+s}}+\alpha x^{q^s}
\end{array}\right)=\left(\begin{array}{cc} 
y\\
y^{q^{kt+s}}+\alpha y^{q^s}
\end{array}\right). \]
Then 
\[ cx+d(x^{q^{tk+s}}+\alpha x^{q^s})=\]
\[ a^{q^{kt+s}}x^{q^{kt+s}}+b^{q^{kt+s}}(x^{q^{2(tk+s)}}+\alpha^{q^{kt+s}}x^{q^{kt+2s}}) +\alpha[a^{q^s}x^{q^s}+b^{q^s}(x^{q^{kt+2s}}+\alpha^{q^s}x^{q^{2s}})]. \]
The powers of $x$ in the above equality are $q^0,q^s,q^{kt+s},q^{2(kt+s)},q^{kt+2s},q^{2s}$.
Consider 
\[
\mathcal{T}= \{0\  \mod{n},\;s\ \mod{n},\;kt+s\ \mod{n},\;2(kt+s)\ \mod{n},\;kt+2s \ \mod{n},\;2s\ \mod{n}\}.
\]

Two elements in $\mathcal{T}$ may coincide if and only if
\begin{itemize}
    \item $2(tk+s) \equiv 0 \pmod{n}$ (which happens if and only if $t=2$ and $t'= 2k+s$);
    \item $tk+2s \equiv 0 \pmod{n}$ (which happens if and only if $t=2$ and $t'= k+s$);
    \item $2s \equiv 0 \pmod{n}$ (which happens if and only if $t=2$ and $t'= s$).
    \item $2(tk+s) \equiv 2s \pmod{n}$ (which happens if and only if $t'= 2k$, a contradiction to the assumptions).
\end{itemize}

If $t\neq2$, or $t=2$ and $t'\notin\{2k+s,k+s,s\}$, then $\mathcal{T}$ has size $6$.
By looking at the coefficients of $x$ and of $x^{q^{2(kt+s)}}$, we get $b=c=0$.
From the coefficients of $x^{q^{s}}$ and $x^{q^{kt+s}}$ we get $d=a^{q^s}$ and $d=a^{q^{tk+s}}$, whence  $a \in \F_{q^{t\cdot\gcd(k,t')}}$ and the statement is proved.

If $t=2$ and $t^\prime$ equals at least one among $s$, $k+s$, and $2k+s$, then $\mathcal{T}$ has size  $5$. In fact, at most one among $t'= 2k+s$, $t'= k+s$ and $t'= s$ can occur. We distinguish three cases.
\begin{enumerate}
    \item $t^\prime= 2k+s$. By looking at the coefficient of $x^{q^{2s}}$, we get $b=0$.
    \item $t^\prime= k+s$. By looking at the coefficient of $x^{q^{2s}}$, we get $b=0$.
    \item $t^\prime= s$. By looking at the coefficient of $x^{q^{2(tk+s)}}$, we get $b=0$.
\end{enumerate}
Now, by looking at the coefficient of $x$, $x^{q^s}$ and $x^{q^{kt+s}}$, we get  $c=0$, $d=a^{q^s}$, and $a\in\mathbb{F}_{q^{t\cdot\gcd(k,t^\prime)}}$.
\end{proof}

We can determine a subgroup of the linear automorphism group of a polynomial of the form \eqref{eq:form}. 

\begin{proposition}\label{prop:groupgen}
Let $n=t t'$, for some $t,t' \in \mathbb{N}$, and let $s\in \mathbb{N}$ be such that $\gcd(s,t)=1$. For a polynomial  
\begin{equation*}
f(x)=\sum_{i=0}^{t'-1} a_i x^{q^{it+s}}\in \fqn[x]. 
\end{equation*}
we have that
\[ \mathcal{G}(f(x))\supseteq \left\{\left(\begin{array}{cc} 
a & 0\\
0 & a^{q^s}
\end{array}\right) \colon a\in \F_{q^{t}}^*\right\}.\]
In particular, $|\mathcal{G}(f(x))|\geq q^t-1$.
\end{proposition}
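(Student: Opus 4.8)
The plan is to verify directly that each matrix in the claimed subgroup indeed belongs to $\mathcal{G}(f(x))$, i.e.\ that it maps the graph $U_f=\{(x,f(x))\colon x\in\fqn\}$ to itself. Fix $a\in\F_{q^t}^*$ and consider the diagonal matrix $\left(\begin{smallmatrix} a & 0\\ 0 & a^{q^s}\end{smallmatrix}\right)$. Applying it to a generic vector $(x,f(x))^{\mathsf T}$ produces $(ax,\,a^{q^s}f(x))^{\mathsf T}$, so I must show that the second coordinate equals $f(ax)$, which would witness the required $y=ax$ in the definition of $\mathcal{G}(f)$.

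The heart of the argument is therefore the identity $a^{q^s}f(x)=f(ax)$ for every $x\in\fqn$, whenever $a\in\F_{q^t}^*$. First I would expand $f(ax)=\sum_{i=0}^{t'-1}a_i(ax)^{q^{it+s}}=\sum_{i=0}^{t'-1}a_i a^{q^{it+s}}x^{q^{it+s}}$. The key observation is that $a\in\F_{q^t}$ means $a^{q^t}=a$, hence $a^{q^{it}}=a$ for every $i\geq0$, and consequently $a^{q^{it+s}}=a^{q^s}$ for every $i$. Substituting this back gives $f(ax)=\sum_{i=0}^{t'-1}a_i a^{q^s}x^{q^{it+s}}=a^{q^s}\sum_{i=0}^{t'-1}a_i x^{q^{it+s}}=a^{q^s}f(x)$, which is exactly what is needed. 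This shows the matrix stabilizes $U_f$, so it lies in $\mathcal{G}(f(x))$.

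It remains to check that the set of such matrices forms a subgroup of order $q^t-1$. The map $a\mapsto\left(\begin{smallmatrix} a & 0\\ 0 & a^{q^s}\end{smallmatrix}\right)$ is an injective group homomorphism from $\F_{q^t}^*$ into $\mathrm{GL}(2,q^n)$: the product of two such matrices corresponds to $a_1a_2$ (using that $(a_1a_2)^{q^s}=a_1^{q^s}a_2^{q^s}$), and the matrix is the identity only when $a=1$. Since $|\F_{q^t}^*|=q^t-1$, the image has exactly $q^t-1$ elements, giving the final bound $|\mathcal{G}(f(x))|\geq q^t-1$.

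I do not anticipate a serious obstacle here, as the statement is essentially the elementary core underlying the first part of Proposition \ref{prop:groupbin} (the binomial case), now stated for the general polynomial of the form \eqref{eq:form}. The only point requiring any care is the reduction $a^{q^{it+s}}=a^{q^s}$, which hinges crucially on the hypothesis $a\in\F_{q^t}$ rather than merely $a\in\fqn$; this is precisely why the stabilizing scalars are restricted to the subfield $\F_{q^t}$. The condition $\gcd(s,t)=1$ is not needed for this containment — it only enters when one tries to prove the reverse inclusion and pin down $\mathcal{G}(f)$ exactly, as in the second part of Proposition \ref{prop:groupbin}.
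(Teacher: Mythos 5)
Your proof is correct, and it is exactly the computation the paper leaves implicit: Proposition~\ref{prop:groupgen} is stated without proof, being the straightforward generalization of the first part of Proposition~\ref{prop:groupbin} (itself dismissed as ``straightforward''). The key identity $f(ax)=a^{q^s}f(x)$ for $a\in\F_{q^t}^*$, via $a^{q^{it+s}}=a^{q^s}$, together with injectivity of $a\mapsto\left(\begin{smallmatrix} a & 0\\ 0 & a^{q^s}\end{smallmatrix}\right)$, is precisely what is needed, and your observation that $\gcd(s,t)=1$ plays no role in this containment is also accurate.
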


In \cite[Remark 7.2]{PZ2019}, it was pointed out that when $t=2$ and $n=2t'$, an LP-polynomial can be written in the form \eqref{eq:form}.
Here we generalize it to a larger family.

\begin{corollary}
Let $n=t t'$, for some $t,t' \in \mathbb{N}$ with $t>1$, and let $s,k\in \mathbb{N}$ be such that $\gcd(s,t)=1$ and $kt+s<n$. Let $\alpha \in \fqn$ be such that $\N_{q^n/q^{\gcd(kt,n)}}(-\alpha)\ne 1$.
If $f(x)=x^{q^{kt+s}}+\alpha x^{q^s}$ is equivalent to an LP-polynomial, then $t=2$ and $n=2t'$.
Conversely, if $t=2$, $n=2t^\prime$, and $\gcd(k,2t^\prime)=1$, then $f(x)$ is equivalent to an LP-polynomial.
\end{corollary}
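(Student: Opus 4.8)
The plan is to treat the two implications by completely different means: the necessity (``equivalent to an LP-polynomial $\Rightarrow t=2$ and $n=2t'$'') is an order computation for the linear automorphism group, whereas the sufficiency is an explicit construction of an equivalence.

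For the forward implication, first observe that the hypotheses force $t'\geq2$ and hence $n=tt'\geq4$ (indeed $k\geq1$ and $t\geq2$ give $kt+s\geq3$, so $tt'>3$). Next note that $\gcd(kt,n)=\gcd(kt,tt')=t\gcd(k,t')$, so the norm assumption $\N_{q^n/q^{\gcd(kt,n)}}(-\alpha)\ne1$ coincides with the one appearing in Proposition~\ref{prop:groupbin}; that proposition therefore applies and yields the unconditional lower bound $|\mathcal{G}(f)|\geq q^{t\gcd(k,t')}-1\geq q^{2}-1$, the last inequality because $t>1$. On the other hand, if $f(x)$ is equivalent to an LP-polynomial $L(x)$, then by Lemma~\ref{lemma:aut} we have $|\mathcal{G}(f)|=|\mathcal{G}(L)|$, and by the known value of the linear automorphism group of an LP-polynomial over $\fqn$ with $n\geq4$ (see \cite[Section 6]{CMPZ}) this equals $q^{2}-1$ when $n$ is even and $q-1$ when $n$ is odd. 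Comparing with the lower bound rules out the odd case (as $q-1<q^{2}-1$) and forces $q^{t\gcd(k,t')}-1\leq q^{2}-1$ in the even case; hence $t\gcd(k,t')\leq2$. Since $t>1$ this gives $t\gcd(k,t')=2$, so $t=2$ and $\gcd(k,t')=1$, whence $n=tt'=2t'$. This completes the necessity.

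For the converse, assume $t=2$, $n=2t'$ and $\gcd(k,2t')=1$; in particular $\gcd(k,n)=1$ and $s$ is odd. Here the easy $\Gamma\mathrm{L}(2,q^n)$-operations (scaling by a diagonal matrix and the diagonal Frobenius $u\mapsto u^{q^j}$) only rescale or Frobenius-twist the coefficients of a binomial and leave its set of $q$-exponents unchanged, so they cannot by themselves transform the exponent set $\{s,2k+s\}$ of $f$ into the symmetric exponent set of an index-$0$ LP-polynomial. The construction must therefore use a genuinely mixing element $\varphi=(A,q^j)\in\Gamma\mathrm{L}(2,q^n)$ with $A=\left(\begin{smallmatrix}a&b\\ c&d\end{smallmatrix}\right)$ having $b\neq0$. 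I would fix $j$ and impose that the first coordinate $a\,x^{q^j}+b\,f(x)^{q^j}$ of $\varphi(x,f(x))$ be a \emph{bijective} $\fq$-linearized binomial $z=z(x)$ over $\fqn$; this is where $\gcd(k,n)=1$ and the norm condition enter, guaranteeing both invertibility and that, after re-expressing $x$ in terms of $z$, the second coordinate $c\,x^{q^j}+d\,f(x)^{q^j}$ collapses to a binomial of the shape $z^{q^{-k}}+\delta z^{q^k}$ for a suitable $\delta$. Taking $s'=k$ (admissible since $\gcd(k,n)=1$) then exhibits the image $\varphi(U_f)$ as the graph $U_L$ of the LP-polynomial $L(x)=x^{q^{k(n-1)}}+\delta x^{q^{k}}$, proving the equivalence.

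The main obstacle is precisely this last construction: producing the matrix entries $a,b,c,d$ and the Frobenius exponent $j$ explicitly, and verifying both that the chosen first coordinate is a bijection of $\fqn$ and that the second coordinate reduces to an LP-binomial in the new variable. A convenient way to organise this computation is to use the coprimality $\gcd(k,2t')=1$ to first reduce, through the equivalences introduced in Sections~\ref{sec:equiv} and~\ref{sec:weaker}, to the case $k=1$, and then to invoke the fact recorded in \cite[Remark 7.2]{PZ2019} that for $t=2$ and $n=2t'$ an LP-polynomial can itself be written in the form \eqref{eq:form}; matching the two binomials and tracking the coefficient via the norm condition then closes the argument.
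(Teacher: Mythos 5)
Your necessity argument is correct and is essentially the paper's own proof: both rest on Lemma~\ref{lemma:aut}, the unconditional containment in part~1 of Proposition~\ref{prop:groupbin} (so the caveat $t'\neq 2k$ is never needed), and the orders of $\mathcal{G}$ for LP-polynomials from \cite[Section 6]{CMPZ}; your observations that $n\geq 4$ and that $\gcd(kt,n)=t\gcd(k,t')$ make this half complete.

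The converse is a genuine gap, and not one your plan can close. You defer the ``main obstacle'' (producing $a,b,c,d,j$), but such a mixing element of $\mathrm{\Gamma L}(2,q^n)$ provably does not exist in general. The condition $U_f^{\varphi}=U_{\psi}$ for an LP-polynomial $\psi$ is the polynomial identity $\psi\bigl(aX+bF(X)\bigr)=cX+dF(X)$ with $F=f^{\sigma}$, and already in the admissible case $n=6$, $t=2$, $t'=3$, $k=s=1$ (so $f=x^{q^3}+\alpha x^{q}$, while every LP-polynomial over $\mathbb{F}_{q^6}$ has exponent set $\{1,5\}$, say $\psi=y^{q^5}+\delta y^{q}$) the coefficients of $X^{q^3}$, $X^{q^4}$, $X^{q^5}$ force $d=0$, $\delta b^{q}=0$, $a^{q^5}=0$, and then the coefficient of $X$ gives $c=0$, contradicting the invertibility of $A$ (the degenerate case $\delta=0$ dies similarly via the coefficient of $X^{q^2}$). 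Your fallback, reducing to $k=1$ ``through the equivalences introduced in Sections~\ref{sec:equiv} and~\ref{sec:weaker},'' is also illegitimate: weak equivalence is strictly coarser than equivalence (this is exactly the content of Corollary~\ref{cor:different}), so it cannot transport the property of being $\mathrm{\Gamma L}(2,q^n)$-equivalent to an LP-polynomial, and no Section~\ref{sec:equiv}-equivalence changing $k$ is established anywhere.

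This also shows your route diverges from the paper's. The paper's converse is a one-line appeal to \cite[Remark 7.2]{PZ2019}: for $t=2$ an LP-polynomial $x^{q^{n-s'}}+\delta x^{q^{s'}}$ \emph{already is} of the form \eqref{eq:form} (both exponents odd), i.e.\ it is a binomial of the family with $kt+s\equiv n-s\pmod{n}$ and $\gcd(s,n)=1$; no mixing matrix is ever constructed, and by the computation above none is available when the exponent sets do not already coincide. So the converse must be obtained by this identification of exponents, not by the construction you propose.
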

\begin{proof}
If $f(x)$ is equivalent to an LP-polynomial, then by Lemma \ref{lemma:aut} and Proposition \ref{prop:groupbin} we have in particular that $t=2$ and $\gcd(k,t^\prime)=1$. The converse follows by \cite[Remark 7.2]{PZ2019}.
\end{proof}

In the case $n=6$ and $t=3$, we determine the number of inequivalent linearized binomials of the form \eqref{eq:form} which are either scattered, or R-$q^t$-partially scattered but not scattered.

\begin{proposition}\label{prop:equivbin}
Let $q=p^e$, for some prime $p$ and positive integer  $e$. 
The number of inequivalent scattered $\fq$-linearized binomials in the set $\Delta := \{\delta x^{q^s}+x^{q^{s+3}} : s\in \{1,2,4,5\}, \delta\in\mathbb{F}_{q^6}\}$, is
\[ \Gamma= \frac{\lvert (q^2+q+1)(q-2)/2 \rvert}{3e}. \]
The  number of inequivalent $f(x)\in\Delta$ which are R-$q^t$-scattered but not scattered is 
\[ q^3-1-\Gamma. \]
In particular, there is only one equivalence class of binomials of the form \eqref{eq:form} which are neither scattered nor R-$q^t$-partially scattered.
\end{proposition}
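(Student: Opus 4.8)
The plan is to count scattered and R-$q^t$-partially scattered binomials in $\Delta$ by combining the equivalence analysis already developed with a count of the relevant orbits under the equivalence group. First I would fix $n=6$, $t=3$, $t'=2$, so that the binomials in $\Delta$ have the form $f_{s,\delta}(x)=\delta x^{q^s}+x^{q^{s+3}}$ with $s\in\{1,2,4,5\}$ (i.e.\ $\gcd(s,3)=1$) and $\delta\in\mathbb{F}_{q^6}$. By Corollary~\ref{cor:bin} with $k=1$, $t'=2$, the polynomial $f_{s,\delta}$ is R-$q^t$-partially scattered precisely when $\N_{q^6/q^3}(-1)\cdot\ldots$, i.e.\ when $\N_{q^6/q^3}(\delta)\neq 1$ (so that it fails only on the norm-one locus), while by Zanella's theorem it is scattered precisely when $\N_{q^6/q}(\delta)\neq 1$. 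I would make these two loci explicit as subsets of $\mathbb{F}_{q^6}^*$ and note that the norm map $\N_{q^6/q^3}\colon\mathbb{F}_{q^6}^*\to\mathbb{F}_{q^3}^*$ is surjective with fibers of size $q^3+1$.

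Next I would identify the equivalence acting on $\Delta$. The relevant group is $\mathrm{\Gamma L}(2,q^6)$ acting on the subspaces $U_f$, but the combination of diagonal scalings in $\mathcal{G}(f)$ (determined in Proposition~\ref{prop:groupbin}, which here gives $|\mathcal{G}(f)|\geq q^{3}-1$ since $\gcd(k,t')=\gcd(1,2)=1$ forces $t\cdot\gcd(k,t')=3$) together with the Frobenius automorphisms of $\mathbb{F}_{q^6}$ must be factored out. The key computation is to determine, for two parameters $\delta,\delta'$ and shifts $s,s'$, exactly when $f_{s,\delta}$ and $f_{s',\delta'}$ are equivalent; I expect this to reduce, after normalizing the monomial $x^{q^{s+3}}$ to have coefficient $1$, to an action on $\delta$ of the form $\delta\mapsto c\,\delta^{\,p^j}$ for scalars $c$ coming from the diagonal torus and $p^j$ from the field automorphism group of order $6e$. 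Tracking which value of $\N_{q^6/q}(\delta)$ or $\N_{q^6/q^3}(\delta)$ is preserved under this action is what will organize the orbit count.

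The counting then proceeds by partitioning $\mathbb{F}_{q^6}^*$ according to these two invariants and dividing by orbit sizes. For the scattered binomials, the scattered locus is $\{\delta : \N_{q^6/q}(\delta)\neq 1\}$; its orbits under the multiplicative-plus-Frobenius action have generic size $3e$ (the $3$ coming from the $\mathbb{F}_{q^3}$-part of the torus modulo the kernel that preserves the coefficient normalization, and the $e$ from $\mathrm{Aut}(\mathbb{F}_q)$). The expression $\Gamma=\lvert(q^2+q+1)(q-2)/2\rvert/(3e)$ strongly suggests that the orbit representatives are indexed by a parameter ranging over a set of size $(q^2+q+1)(q-2)/2$, where $q^2+q+1=(q^3-1)/(q-1)$ counts cosets in the norm-one subgroup, $q-2$ excludes the forbidden norm values, and the factor $1/2$ reflects the involution $s\leftrightarrow$ (the adjoint/reciprocal shift among $\{1,2,4,5\}$) identified in the equivalence. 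I would verify this by carefully computing the stabilizer of a generic $\delta$ to confirm the orbit size is exactly $3e$ and that no extra collisions occur. For the R-but-not-scattered count I would use that the R-$q^t$-partially scattered locus $\{\N_{q^6/q^3}(\delta)\neq 1\}$ is the full $U_f$-scattered-over-$\mathbb{F}_{q^3}$ set; since every R-scattered binomial in $\Delta$ that is not scattered lies in a single large piece, and the total number of R-scattered classes turns out to be $q^3-1$ (matching the automorphism-group order from Proposition~\ref{prop:groupbin}), the not-scattered count is $q^3-1-\Gamma$. Finally, the single class that is neither scattered nor R-$q^t$-partially scattered corresponds to $\N_{q^6/q^3}(\delta)=1$, which by Proposition~\ref{prop:LP} (applicable since the LP-structure forces a single equivalence class when the norm-one condition holds) yields exactly one orbit.

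The main obstacle I anticipate is the precise determination of orbit sizes, i.e.\ pinning down the stabilizer of a generic binomial under the full $\mathrm{\Gamma L}(2,q^6)$-action rather than just its subgroup $\mathcal{G}(f)$. One must rule out ``accidental'' equivalences between $f_{s,\delta}$ and $f_{s',\delta'}$ with $s\neq s'$ that do not come from the torus, and confirm that the shift parameter $s$ contributes exactly the factor of $2$ (via the identification of $s$ with $6-s$ coming from the adjoint map of Proposition~\ref{prop:adj}, together with multiplication by $q$-powers). This requires a careful case analysis of when the matrix entries force $b=c=0$, analogous to but more delicate than the argument in Proposition~\ref{prop:groupbin}, because here the field automorphism $\sigma$ is allowed to be nontrivial and can rearrange the exponent set $\mathcal{T}$.
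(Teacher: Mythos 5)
The decisive gap is your scatteredness criterion. You claim that $\delta x^{q^s}+x^{q^{s+3}}\in\mathbb{F}_{q^6}[x]$ is scattered precisely when $\mathrm{N}_{q^6/q}(\delta)\ne 1$, ``by Zanella's theorem''. Zanella's theorem concerns LP-polynomials $x^{q^{s(n-1)}}+\delta x^{q^s}$, whose exponent pair is $\{s,-s\}$ modulo $n$; the binomials of $\Delta$ have exponent pair $\{s,s+3\}$ modulo $6$, which is never of this form (it would force $2s\equiv 3\pmod 6$), and the unlabelled corollary in Section \ref{sec:equiv} even shows that for $t=3$ these binomials are not equivalent to \emph{any} LP-polynomial. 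So the criterion cannot be imported from Zanella. Deciding for which $\delta$ these binomials are scattered is exactly the hard content of the statement: it is the main result of \cite{BCM} (the resolution of a conjecture of Csajb\'ok--Marino--Polverino--Zanella, obtained there with substantial algebraic-geometric machinery), and the paper's proof is accordingly citation-based: reduce to $s=1$ by \cite[Introduction]{BCM} and \cite[Proposition 5.1]{CMPZ}, obtain $\Gamma$ from \cite[Theorem 1.3]{BCM}, and get the R-$q^t$-partial criterion $\mathrm{N}_{q^6/q^3}(\delta)\ne1$ from \cite[Proposition 2.12]{LZ} (your use of Corollary \ref{cor:bin} for this last point is fine and equivalent). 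Even if the norm condition you posit agrees with the characterization established in \cite{BCM}, asserting it via Zanella is not a proof; without a valid route to the scattered locus, your orbit count has no starting point.

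Two further steps would fail even granting the locus. Your justification that the number of R-$q^t$-partially scattered classes ``turns out to be $q^3-1$'' because this matches $|\mathcal{G}(f)|\geq q^3-1$ from Proposition \ref{prop:groupbin} is numerology: the order of the linear automorphism group of a single polynomial has no bearing on how many equivalence classes there are; any such count must come from completing the orbit analysis of the invariant $\mathrm{N}_{q^6/q^3}(\delta)$ under the scalar and Frobenius actions you describe, which is precisely where your argument stops (``I would verify'', ``no extra collisions occur''). Finally, the claim that $\mathrm{N}_{q^6/q^3}(\delta)=1$ yields a single class cannot be obtained from Proposition \ref{prop:LP}: that proposition concerns LP-polynomials and assumes $n$ odd, whereas here $n=6$. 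That one-class statement instead follows from the diagonal equivalences your own setup produces: with $s$ fixed, $\delta\mapsto \delta a^{q^4-q}$ sweeps a full coset of the kernel of $\mathrm{N}_{q^6/q^3}$, so all $\delta$ of norm $1$ over $\mathbb{F}_{q^3}$ give equivalent binomials.
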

\begin{proof}
By \cite[Introduction]{BCM} (see also \cite[Proposition 5.1]{CMPZ}), it is enough to study the case $s=1$, then by applying \cite[Theorem 1.3]{BCM} we get the first part of the assertion. 
The second part follows from the fact that $f(x)=\delta x^{q}+x^{q^{4}}$ is R-$q^t$-partially scattered if and only if $\N_{q^6/q^3}(\delta)\ne 1$, see \cite[Proposition 2.12]{LZ}.
\end{proof}

Regarding the R-$q^t$-partially scattered quadrinomials introduced in \cite[Proposition 2.18]{LZ} we can prove the following.

\begin{proposition}\label{Prop:prima}
Let $t,s,k \in \mathbb{N}$ be such that $\gcd(s,2t)=\gcd(k,2t)=1$ and $t\geq 2$, and let $n=2t$. Suppose that  $\{ 0,s,t+s,k,t+k,2s,t+2s,k+s,t+k+s,2k,t+2k \}$ has size $11$ as a subset of $\mathbb{Z}/n\mathbb{Z}$.

If $q$ is odd, then 
\[ \mathcal{G}(x^{q^s}+x^{q^{t+s}}+x^{q^k}-x^{q^{t+k}})=\left\{\left(\begin{array}{cc} 
a & 0\\
0 & a^{q^s}
\end{array}\right) \colon a\in \F_{q^{\gcd(t,\lvert k-s\rvert)}}^*\right\};\]
In particular, $|\mathcal{G}(x^{q^s}+x^{q^{t+s}}+x^{q^k}-x^{q^{t+k}})|=q^{\gcd(t,\lvert k-s\rvert)}-1$.

If $q$ is even, then
\[ \mathcal{G}(x^{q^s}+x^{q^{t+s}}+x^{q^k}+x^{q^{t+k}})=\left\{\left(\begin{array}{cc} 
a & b\\
0 & a^{q^s}
\end{array}\right) \colon a\in \F_{q^{\gcd(t,\lvert k-s\rvert)}}^*,\,b^{q^t}=b\right\};\]
In particular, $|\mathcal{G}(x^{q^s}+x^{q^{t+s}}+x^{q^k}+x^{q^{t+k}})|=q^t(q^{\gcd(t,\lvert k-s\rvert)}-1)$.
\end{proposition}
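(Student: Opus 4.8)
The plan is to compute the linear automorphism group $\mathcal{G}(f)$ directly from its defining equation, following the same strategy used in Proposition~\ref{prop:groupbin}. Let $f(x)=x^{q^s}+x^{q^{t+s}}+x^{q^k}\mp x^{q^{t+k}}$ (sign depending on the parity of $q$) and take $\left(\begin{smallmatrix} a & b\\ c & d\end{smallmatrix}\right)\in\mathrm{GL}(2,q^n)$ in $\mathcal{G}(f)$. Writing out the condition $c\,x+d\,f(x)=f(y)$ with $y=ax+b\,f(x)$ and expanding $f(y)$ via the $q$-linearity, I would collect the monomials $x^{q^j}$ appearing on both sides. The exponents that occur are exactly the eleven residues in $\{0,s,t+s,k,t+k,2s,t+2s,k+s,t+k+s,2k,t+2k\}$ together with those coming from the $c x + d f(x)$ side, namely $0,s,t+s,k,t+k$. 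The hypothesis that this set has size $11$ in $\mathbb{Z}/n\mathbb{Z}$ guarantees that no unwanted collisions merge distinct coefficient equations, so each exponent yields a clean scalar identity.

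First I would handle the case $q$ odd. Matching the coefficients of the monomials that arise only from $b\,f(x)^{(\cdot)}$ — concretely the exponents $2s$, $t+2s$, $k+s$, $t+k+s$, $2k$, $t+2k$ that cannot be cancelled by the $cx+df(x)$ terms — forces $b=0$. Then matching the coefficient of $x$ (which only appears through $c$) forces $c=0$, so the matrix is diagonal. With $b=c=0$ the equation reduces to $d\,f(x)=f(ax)$, i.e. $d\,x^{q^s}+d\,x^{q^{t+s}}+d\,x^{q^k}-d\,x^{q^{t+k}} = a^{q^s}x^{q^s}+a^{q^{t+s}}x^{q^{t+s}}+a^{q^k}x^{q^k}-a^{q^{t+k}}x^{q^{t+k}}$. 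Comparing coefficients term by term gives $d=a^{q^s}=a^{q^{t+s}}=a^{q^k}=a^{q^{t+k}}$. From $a^{q^s}=a^{q^k}$ we get $a\in\mathbb{F}_{q^{\gcd(t,\lvert k-s\rvert)}}$ once combined with $a^{q^s}=a^{q^{t+s}}$ (the latter giving $a\in\mathbb{F}_{q^t}$ together with $a\in\fqn$), and $d=a^{q^s}$; conversely every such $a$ produces an element of $\mathcal{G}(f)$, which gives the stated group and its order $q^{\gcd(t,\lvert k-s\rvert)}-1$.

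For $q$ even the sign difference disappears, so $f(x)=x^{q^s}+x^{q^{t+s}}+x^{q^k}+x^{q^{t+k}}$, and the additional symmetry allows a nonzero upper-triangular entry $b$. I would again match coefficients: the monomials forcing $b=0$ in odd characteristic now instead impose a compatibility condition relating $b$ to itself, and I expect $b$ to be constrained precisely by $b^{q^t}=b$, i.e. $b\in\mathbb{F}_{q^t}$, while still forcing $c=0$ and $d=a^{q^s}$ with $a\in\mathbb{F}_{q^{\gcd(t,\lvert k-s\rvert)}}$. Counting gives $q^t$ choices for $b$ and $q^{\gcd(t,\lvert k-s\rvert)}-1$ for $a$, yielding the order $q^t(q^{\gcd(t,\lvert k-s\rvert)}-1)$.

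The main obstacle will be the careful bookkeeping of which of the eleven coefficient equations force $b=0$ in the odd case versus permit the extra freedom $b^{q^t}=b$ in the even case: this is exactly where the characteristic enters, through the $\mp$ sign, and where the size-$11$ hypothesis is essential to ensure that the relevant monomials are not accidentally identified. I would isolate the subset of exponents that appear \emph{only} through $b\,f(f(x))$-type contributions and verify that in odd characteristic their defining equations are inhomogeneous enough to kill $b$, whereas in even characteristic the matching pairs of terms collapse into the single Frobenius-fixed condition $b^{q^t}=b$. The remaining extraction of $a\in\mathbb{F}_{q^{\gcd(t,\lvert k-s\rvert)}}$ from the simultaneous conditions $a^{q^s}=a^{q^k}$ and $a^{q^{t+s}}=a^{q^s}$ is then a routine gcd computation in the exponents modulo $n=2t$.
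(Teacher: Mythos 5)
Your proposal is correct and takes essentially the same route as the paper's proof: expand $cx+df(x)=f(ax+bf(x))$, use the size-$11$ hypothesis to keep the coefficient equations separate, read off $c=0$, $d=a^{q^s}$ and $a\in\F_{q^{\gcd(t,\lvert k-s\rvert)}}$ from the exponents $0,s,t+s,k,t+k$, and constrain $b$ via the six exponents arising only from the $b$-terms. The one step you leave implicit works exactly as in the paper: in odd characteristic the equation at $x^{q^{2k}}$ gives $b^{q^k}=-b^{q^{t+k}}$ while the sum of the equations at $x^{q^{k+s}}$ and $x^{q^{t+k+s}}$ gives $b^{q^k}=b^{q^{t+k}}$, forcing $2b=0$, i.e.\ $b=0$ (your word ``inhomogeneous'' is a misnomer, since the system is homogeneous), whereas in even characteristic all six equations collapse to the single condition $b^{q^t}=b$.
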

\begin{proof}
Let $\left(\begin{array}{cc} 
a & b\\
c & d
\end{array}\right)\in \mathrm{GL}(2,q^n)$ with the property that for each $x \in \fqn$ there exists $y \in \fqn$ such that
\[ \left(\begin{array}{cc} 
a & b\\
c & d
\end{array}\right) \left(\begin{array}{cc} 
x\\
x^{q^s}+x^{q^{t+s}}+x^{q^k}-x^{q^{t+k}}
\end{array}\right)=\left(\begin{array}{cc} 
y\\
y^{q^s}+y^{q^{t+s}}+y^{q^k}-y^{q^{t+k}}
\end{array}\right). \]
Then
\begin{eqnarray} cx+d(x^{q^s}+x^{q^{t+s}}+x^{q^k}-x^{q^{t+k}})=a^{q^{s}}x^{q^{s}}+b^{q^{s}}(x^{q^{2s}}+x^{q^{t+2s}}+x^{q^{k+s}}-x^{q^{t+k+s}})\nonumber\\
-a^{q^{t+k}}x^{q^{t+k}}+a^{q^{t+s}}x^{q^{t+s}}+b^{q^{t+s}}(x^{q^{t+2s}}+x^{q^{2s}}+x^{q^{t+k+s}}-x^{q^{k+s}})+a^{q^k}x^{q^k} \nonumber\\
+b^{q^k}(x^{q^{k+s}}+x^{q^{t+k+s}}+x^{q^{2k}}-x^{q^{t+2k}})-b^{q^{t+k}}(x^{q^{t+k+s}}+x^{q^{k+s}}+x^{q^{t+2k}}-x^{q^{2k}}).\label{eq:polidentity}\end{eqnarray}
From the coefficient of $x$ we get $c=0$ and from the coefficients of $x^{q^s}$, $x^{q^{t+s}}$, $x^{q^k}$, and $x^{q^{t+k}}$ we obtain $d=a^{q^s}$ and $a\in \F_{q^{\gcd(t,\lvert k-s\rvert)}}$.

If $q$ is odd, then by the coefficients of $x^{q^{2s}}$, $x^{q^{k+s}}$ and $x^{q^{t+k+s}}$ one gets $b=0$.
If $q$ is even, then by the coefficient of $x^{q^{2s}}$ one gets $b^{q^t}=b$. Together with $a\in\mathbb{F}_{q^{\gcd(t,|k-s|)}}^*$, $c=0$, and $d=a^{q^s}$, this is enough to satisfy the polynomial identity \eqref{eq:polidentity}.
\end{proof}

\begin{corollary}\label{cor:LZ1}
Let $t,s,k \in \mathbb{N}$ be such that $\gcd(s,2t)=\gcd(k,2t)=1$ and $t\geq 2$, and let $n=2t$.
Assume also that $2s<t$, $2k<t$, and $s\ne k$.
If $q$ is odd, then 
\[ \mathcal{G}(x^{q^s}+x^{q^{t+s}}+x^{q^k}-x^{q^{t+k}})=\left\{\left(\begin{array}{cc} 
a & 0\\
0 & a^{q^s}
\end{array}\right) \colon a\in \F_{q^{\gcd(t,\lvert k-s\rvert)}}^*\right\};\]
and if $q$ is even
\[ \mathcal{G}(x^{q^s}+x^{q^{t+s}}+x^{q^k}+x^{q^{t+k}})=\left\{\left(\begin{array}{cc} 
a & b\\
0 & a^{q^s}
\end{array}\right) \colon a\in \F_{q^{\gcd(t,\lvert k-s\rvert)}}^*,\,b^{q^t}=b\right\}.\]
In particular, $x^{q^s}+x^{q^{t+s}}+x^{q^k}-x^{q^{t+k}}$ is not equivalent to any polynomial of the form \eqref{eq:form}.
\end{corollary}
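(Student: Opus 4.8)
The plan is to deduce the explicit description of $\mathcal{G}(f)$ directly from Proposition \ref{Prop:prima}, and then to read off the non-equivalence from an order-theoretic argument based on Lemma \ref{lemma:aut} and Proposition \ref{prop:groupgen}.

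First I would check that the extra hypotheses $2s<t$, $2k<t$ and $s\ne k$ guarantee the size-$11$ assumption of Proposition \ref{Prop:prima}. Working in $\mathbb{Z}/2t\mathbb{Z}$, the bounds $2s<t$ and $2k<t$ (which also force $s+k<t$) keep the six residues $0,s,k,2s,2k,s+k$ inside $[0,t)$ and the remaining five residues $t+s,t+k,t+2s,t+2k,t+s+k$ inside $[t,2t)$, so the two blocks cannot interfere. Within the lower block, $s$ and $k$ are odd while $0,2s,2k,s+k$ are even, so it suffices to separate $s\ne k$ among the odd residues and $0,2s,2k,s+k$ (nonzero and pairwise distinct because $s\ne k$) among the even ones; the upper block is the lower one minus $0$ translated by $t$, hence consists of five distinct residues disjoint from the lower block. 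This yields exactly $11$ residues, so Proposition \ref{Prop:prima} applies and gives the stated group $\mathcal{G}(f)$ in both parities.

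For the non-equivalence claim, set $d=\gcd(t,\lvert k-s\rvert)$. Since $s\ne k$ and $2s,2k<t$, I have $0<\lvert k-s\rvert<t$, so $d$ is a proper divisor of $t$; in particular $d<t$. I would then compare $\lvert\mathcal{G}(f)\rvert$ with the order of the automorphism group of an arbitrary polynomial $g$ of the form \eqref{eq:form}: since $n=2t$ forces $t'=2$, Proposition \ref{prop:groupgen} shows that $\mathcal{G}(g)$ contains the cyclic subgroup $\{\mathrm{diag}(a,a^{q^s})\colon a\in\mathbb{F}_{q^t}^*\}$ of order $q^t-1$, so $q^t-1$ divides $\lvert\mathcal{G}(g)\rvert$ by Lagrange. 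The point is that $q^t-1$ does \emph{not} divide $\lvert\mathcal{G}(f)\rvert$: in the odd case $\lvert\mathcal{G}(f)\rvert=q^d-1$ with $0<q^d-1<q^t-1$, and in the even case $\lvert\mathcal{G}(f)\rvert=q^t(q^d-1)$, where $\gcd(q^t,q^t-1)=1$ reduces the question to whether $q^t-1\mid q^d-1$, i.e. to $t\mid d$, which fails since $d<t$. Hence $\lvert\mathcal{G}(f)\rvert\ne\lvert\mathcal{G}(g)\rvert$, and Lemma \ref{lemma:aut} gives that $f$ is not equivalent to any polynomial of the form \eqref{eq:form}.

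The size-$11$ verification is routine; the step that genuinely needs care is the even-characteristic case of the non-equivalence, where the groups can be large and a crude comparison of magnitudes is inconclusive. The divisibility refinement ``$q^t-1\mid\lvert\mathcal{G}(g)\rvert$ but $q^t-1\nmid\lvert\mathcal{G}(f)\rvert$'' is exactly what makes the argument go through uniformly in both parities.
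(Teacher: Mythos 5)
Your proposal is correct and follows essentially the same route as the paper: verify the size-$11$ hypothesis so that Proposition \ref{Prop:prima} yields the group, then rule out equivalence because $q^t-1$ divides $\lvert\mathcal{G}(g)\rvert$ for every $g$ of the form \eqref{eq:form} (Proposition \ref{prop:groupgen} plus Lagrange) while $q^t-1$ divides neither $q^d-1$ nor $q^t(q^d-1)$ with $d=\gcd(t,\lvert k-s\rvert)<t$, contradicting Lemma \ref{lemma:aut}. The paper states this divisibility obstruction in one line; you merely spell out the details it leaves implicit.
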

\begin{proof}
Since $s$ and $k$ are odd, $s<t/2$, and  $s\ne k$, $\{ 0,s,t+s,k,t+k,2s,t+2s,k+s,t+k+s,2k,t+2k \}$ has size 11 and the group $\mathcal{G}(x^{q^s}+x^{q^{t+s}}+x^{q^k}+x^{q^{t+k}})$ follows from  Proposition \ref{Prop:prima}.
Since its order is not divisible by $q^t-1$, the claim follows by Proposition \ref{prop:groupgen}.
\end{proof}

Regarding the scattered polynomials presented in \cite{LZ2} the linear automorphism group can be completely determined.

\begin{corollary}\label{cor:LZ2}%\label{cor:gruppoperconfronto}
Let $q$ be an odd prime power. Let $t,k,n \in \mathbb{N}$ with $n=2t$ be such that either $k=1$ and $t\geq5$, or $k>1$, $\gcd(k,2t)=1$, and $t> 2k$.
Then 
\[ \mathcal{G}(x^{q^{t-k}}+x^{q^{2t-k}}+x^{q^k}-x^{q^{t+k}})=\left\{\left(\begin{array}{cc} 
a & 0\\
0 & a^{q}
\end{array}\right) \colon a\in \F_{q^{2}}^*\right\},\]
if $t$ is even,
and 
\[ \mathcal{G}(x^{q^{t-k}}+x^{q^{2t-k}}+x^{q^k}-x^{q^{t+k}})=\left\{\left(\begin{array}{cc} 
a & b\\
-4b & a
\end{array}\right) \colon a\in \F_{q},b^q+b=0, a^2+4b^2\ne0 \right\},\]
if $t$ is odd.
\end{corollary}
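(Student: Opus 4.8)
The plan is to determine $\mathcal{G}(f)$ for $f(x)=x^{q^{t-k}}+x^{q^{2t-k}}+x^{q^k}-x^{q^{t+k}}$ by a direct coefficient comparison in the spirit of Proposition~\ref{Prop:prima}, the key point being that this polynomial lies \emph{outside} the hypotheses of that proposition: writing its first two terms with ``$s=t-k$'', one has $t+k+s\equiv 0\pmod{n}$, so the relevant exponent set has size at most $10$ rather than $11$, and a fresh analysis sensitive to the parity of $t$ is needed. I would start from a matrix $\left(\begin{smallmatrix} a & b \\ c & d\end{smallmatrix}\right)\in\mathrm{GL}(2,q^n)$ in $\mathcal{G}(f)$, which by definition means $cx+df(x)=f(ax+bf(x))$ as a polynomial identity modulo $x^{q^n}-x$. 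Using that $f$ is $\fq$-linearized with coefficients in $\{1,-1\}\subseteq\fq$, I expand $f(ax+bf(x))=f(ax)+f(bf(x))$ and record the exponents that occur: the terms $f(ax)$ and $df(x)$ contribute the set $E=\{t-k,2t-k,k,t+k\}$, while $f(bf(x))$ contributes $B=\{0,t,2k,2t-2k,t+2k,t-2k\}$, consisting of the pairwise sums of elements of $E$.

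The first step is the bookkeeping. Under the hypotheses ($k=1$, $t\geq5$, or $k>1$, $\gcd(k,2t)=1$, $t>2k$) one checks that $|E|=4$, $|B|=6$, and, crucially, $E\cap B=\emptyset$, so that $E\cup B$ accounts for exactly ten distinct exponents: for $t$ even this is immediate by parity ($E$ consists of odd residues and $B$ of even ones), while for $t$ odd it amounts to excluding the collisions $t\in\{2k,3k,4k\}$, which is precisely what the inequalities and the condition $\gcd(k,2t)=1$ guarantee. Since $0,t\in B\setminus E$, comparing coefficients at the exponents in $E$ (where neither $cx$ nor the $b$-terms contribute) yields $d=a^{q^e}$ for every $e\in E$; from $a^{q^k}=a^{q^{t+k}}$ and $a^{q^{t-k}}=a^{q^k}$ one deduces $a\in\mathbb{F}_{q^{\gcd(t,2k)}}=\mathbb{F}_{q^{\gcd(t,2)}}$ and $d=a^{q^k}$, i.e. $a\in\fq$, $d=a$ when $t$ is odd, and $a\in\mathbb{F}_{q^2}$, $d=a^q$ when $t$ is even. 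Comparing coefficients at the four ``new'' exponents $2k,2t-2k,t\pm 2k$ (whose coefficient on the left is $0$) gives $b^{q^k}+b^{q^{t+k}}=0$ and $b^{q^{t-k}}+b^{q^{2t-k}}=0$, both equivalent to $b^{q^t}=-b$; the equation at the exponent $t$, combined with $b^{q^t}=-b$, then collapses to $b^{q^{2k}}=b$, so $b\in\mathbb{F}_{q^2}$.

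At this point the parity of $t$ splits the analysis, and here the hypothesis that $q$ is odd is decisive. If $t$ is even then $b\in\mathbb{F}_{q^2}$ forces $b^{q^t}=b$, so together with $b^{q^t}=-b$ we get $2b=0$, hence $b=0$; the exponent-$0$ equation then gives $c=0$, and we recover the diagonal group $\{\mathrm{diag}(a,a^q)\colon a\in\mathbb{F}_{q^2}^*\}$, invertibility being $\mathrm{N}_{q^2/q}(a)\neq0$. If $t$ is odd then $b\in\mathbb{F}_{q^2}$ with $b^{q^t}=-b$ gives $b^q=-b$, and the exponent-$0$ equation, evaluated with $b^q=-b$, yields $c=-b^{q^{t-k}}+b^{q^{2t-k}}+b^{q^k}-b^{q^{t+k}}=-4b$; the determinant is then $a^2+4b^2$, which must be nonzero. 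I would close both cases with the routine converse, verifying that every matrix of the stated shape does satisfy the polynomial identity, so that the two displayed groups are exactly $\mathcal{G}(f)$.

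The step I expect to be the main obstacle is the exponent bookkeeping: one must check, using the full strength of the hypotheses, that no two of the ten exponents in $E\cup B$ coincide beyond the structural coincidences already accounted for, since any extra collision would fold additional $a$- or $b$-terms into one of the governing equations and could enlarge $\mathcal{G}(f)$. Once $E$ and $B$ are known to be disjoint and of the expected sizes, the rest is a disciplined comparison of coefficients, the only conceptual subtlety being the parity-driven dichotomy at the stage $b\in\mathbb{F}_{q^2}$, $b^{q^t}=-b$, which is exactly what produces the two different forms of $\mathcal{G}(f)$.
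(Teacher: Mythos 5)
Your proposal is correct and follows essentially the same route as the paper: both start from the polynomial identity of Proposition~\ref{Prop:prima} specialized to $s=t-k$, verify under the stated hypotheses that the ten relevant exponents (your $E\cup B$, the paper's $\mathcal{T}$) are pairwise distinct modulo $n$, extract the same system ($d=a^{q^e}$ for $e\in E$, $b^{q^t}=-b$, then $b^{q^{2k}}=b$ using $q$ odd, hence $b\in\mathbb{F}_{q^2}$), and split on the parity of $t$ to obtain $b=0$, $c=0$ in the even case and $b^q=-b$, $c=-4b$ in the odd case. The only difference is organizational (your explicit $E$/$B$ bookkeeping versus the paper's single set $\mathcal{T}$ and displayed system), not mathematical.
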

\begin{proof}
The polynomial identity \eqref{eq:polidentity} reads
\[ cx+d(x^{q^{t-k}}+x^{q^{2t-k}}+x^{q^k}-x^{q^{t+k}})=a^{q^{t-k}}x^{q^{t-k}}+b^{q^{t-k}}(x^{q^{2t-2k}}+x^{q^{t-2k}}+x^{q^{t}}-x)\]
\[+a^{q^{2t-k}}x^{q^{2t-k}}+b^{q^{2t-k}}(x^{q^{t-2k}}+x^{q^{2t-2k}}+x-x^{q^{t}})+a^{q^k}x^{q^k}+b^{q^k}(x^{q^{t}}+x+x^{q^{2k}}-x^{q^{t+2k}}) \]
\[ -a^{q^{t+k}}x^{q^{t+k}}-b^{q^{t+k}}(x+x^{q^{t}}+x^{q^{t+2k}}-x^{q^{2k}}). \]
By our assumptions on $t$ and $k$ we have that the set 
\[\mathcal{T}=\{0, t-k, 2t-k, k, t+k, 2t-2k, t-2k, t, 2k, t+2k\}\subseteq \mathbb{Z}/n\mathbb{Z}\] 
has size $10$.
So, $a,b,c,d$ have to satisfy the following system
\begin{equation}\label{eq:condquadrin}
\left\{
\begin{array}{lllllllllll}
c=-b^{q^{t-k}}+b^{q^{2t-k}}+b^{q^k}-b^{q^{t+k}},\\
d=a^{q^{t-k}},\\
d=a^{q^{2t-k}},\\
d=a^{q^k},\\
d=a^{q^{t+k}},\\
b^{q^{t-k}}+b^{q^{2t-k}}=0,\\
%b^{q^{t-k}}+b^{q^{2t-k}}=0,\\
b^{q^{t-k}}-b^{q^{2t-k}}+b^{q^k}-b^{q^{t+k}}=0,\\
%b^{q^k}+b^{q^{t+k}}=0,\\
%b^{q^k}+b^{q^{t+k}}=0.
\end{array}
\right.
\end{equation}
By the last equation we have $b^{q^t}+b=0$, which replaced in the seventh equation of \eqref{eq:condquadrin} implies $b^{q^{t+2k}}+b=0$, that is $b^{q^{2k}}=b$.
Since $\gcd(k,2t)=1$, this implies that $b \in \F_{q^2}$.
If $t$ is even, then $b^{q^t}+b=0$ implies $b=0$ and $c=0$, because of the first equation in \eqref{eq:condquadrin}.
When $t$ is odd, we have $b^q=-b$, $c=-4b$. Hence,  all equations of System \eqref{eq:condquadrin} but the second, third, and fourth ones  are satisfied.
The second, third, and fourth equations of \eqref{eq:condquadrin} together imply $d=a^{q^k}$ and $a \in \F_{q^{\gcd(t, t-2k)}}$.
The assertion then follows noting that $\gcd(t,t-2k)=2$ if $t$ is even and $\gcd(t,t-2k)=1$ if $t$ is odd.
\end{proof}

%\begin{remark}
%Before the construction in \cite[Theorem 2.4]{LZ2}, the only known examples of scattered polynomials over $\F_{q^n}$ existing for infinitely many $n$'s\highlight[id=DAN, comment=qui si capisce che non sitiamo parlando di eccezionali?]{} were those of pseudoregulus type and the LP-polynomials.
%As a consequence of Corollary \ref{cor:gruppoperconfronto}, we get that the family of scattered polynomials found in \cite[Theorem 2.4]{LZ2} is not equivalent to those two families whenever $t\geq5$ is odd and, differently from \cite{LZ2}, we get this without using the invariants introduced in \cite{ZZ}.
%\end{remark}

\begin{remark}
In \cite[Section 4]{LMTZ}, when $t\geq 5$ and $q$ is odd, the authors find the same results as in Corollaries \ref{cor:LZ1} and \ref{cor:LZ2}, but with a different approach. 
\end{remark}

\section{A weaker equivalence}\label{sec:weaker}

In this section we present a weaker definition of equivalence between two linearized polynomials of $\F_{q^n}[x]$ which preserves the property of being R-$q^t$-partially scattered.
Let $f(x)$ and $g(x)$ be two $\fq$-linearized polynomials in $\F_{q^n}[x]$.
We say that $f(x)$ and $g(x)$ are \emph{weakly equivalent} if there exists $\varphi \in \mathrm{\Gamma L}(2t',q^t)$ such that $U_f^\varphi=U_g$.

Clearly $\Gamma\mathrm{L}(2,q^{n})$ is a subgroup of $\Gamma\mathrm{L}(2t',q^{t})$ (as $\Gamma\mathrm{L}(2,q^{n})$ is the stabiliser of a Desarsguesian spread of $V(2t',q^t)$ in $\Gamma\mathrm{L}(2t',q^{t})$), and hence two equivalent $\fq$-linearized polynomials are also weakly equivalent. 
%As already mentioned, the weak equivalence preserves the property of being R-$q^t$-partially scattered.

\begin{proposition}
Let $f(x)$ and $g(x)$ be two weakly equivalent $\fq$-linearized polynomials in $\F_{q^n}[x]$.
If $f(x)$ is R-$q^t$-partially scattered, then also $g(x)$ is R-$q^t$-partially scattered.
\end{proposition}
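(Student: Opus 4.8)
The plan is to reduce the statement immediately to the geometric characterization provided by Theorem \ref{th:scatteredbig}, mirroring the proof of the analogous statement for the strong $\Gamma\mathrm{L}(2,q^n)$-equivalence, but now skipping the embedding step: since weak equivalence is defined directly through the group $\Gamma\mathrm{L}(2t',q^t)$, there is no need to realize $\Gamma\mathrm{L}(2,q^n)$ inside $\Gamma\mathrm{L}(2t',q^t)$, and the transfer becomes a single step.

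First I would translate the hypothesis into geometric language. By Theorem \ref{th:scatteredbig}, since $f(x)$ is R-$q^t$-partially scattered, the subspace $U_f$ is a scattered $\F_q$-subspace of $\F_{q^n}\times\F_{q^n}$ regarded as the $\F_{q^t}$-vector space $V(2t',q^t)$, scatteredness being understood with respect to its Desarguesian $\F_{q^t}$-spread $\mathcal{S}=\{\langle \mathbf{v}\rangle_{\F_{q^t}}\colon \mathbf{v}\in V\setminus\{\mathbf{0}\}\}$. By the definition of weak equivalence there is some $\varphi\in\Gamma\mathrm{L}(2t',q^t)$ with $U_f^\varphi=U_g$.

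The core observation is that $\varphi$, being an $\F_{q^t}$-semilinear bijection of $V$, sends each $1$-dimensional $\F_{q^t}$-subspace to a $1$-dimensional $\F_{q^t}$-subspace, hence permutes the members of $\mathcal{S}$; moreover, as an $\F_q$-semilinear bijection it preserves the $\F_q$-dimension of every $\F_q$-subspace. Therefore $\varphi$ maps scattered $\F_q$-subspaces to scattered $\F_q$-subspaces: for any spread element $\langle \mathbf{w}\rangle_{\F_{q^t}}\in\mathcal{S}$ one has $\dim_{\F_q}\big(U_g\cap\langle \mathbf{w}\rangle_{\F_{q^t}}\big)=\dim_{\F_q}\big(U_f\cap\varphi^{-1}(\langle \mathbf{w}\rangle_{\F_{q^t}})\big)\leq 1$, since $\varphi^{-1}(\langle \mathbf{w}\rangle_{\F_{q^t}})$ is again a member of $\mathcal{S}$ and $U_f$ is scattered. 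Thus $U_g$ is scattered, and applying Theorem \ref{th:scatteredbig} in the reverse direction yields that $g(x)$ is R-$q^t$-partially scattered.

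There is essentially no genuine obstacle in this argument; the only point requiring care is the recognition that $\Gamma\mathrm{L}(2t',q^t)$ is precisely the group of $\F_{q^t}$-semilinear bijections stabilizing the Desarguesian $\F_{q^t}$-spread of $V(2t',q^t)$, which is exactly what makes scatteredness a $\Gamma\mathrm{L}(2t',q^t)$-invariant notion. Unlike the $\Gamma\mathrm{L}(2,q^n)$ case, no passage between two groups is needed, so the conclusion follows as a direct one-step transfer through Theorem \ref{th:scatteredbig}.
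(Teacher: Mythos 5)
Your proof is correct and follows essentially the same route as the paper: both reduce the claim to Theorem \ref{th:scatteredbig} and use the fact that an element of $\Gamma\mathrm{L}(2t',q^t)$ preserves scatteredness of $\fq$-subspaces of $V(2t',q^t)$. The only difference is that you spell out explicitly why scatteredness is $\Gamma\mathrm{L}(2t',q^t)$-invariant (the map permutes the Desarguesian $\F_{q^t}$-spread and preserves $\F_q$-dimensions), a point the paper leaves implicit.
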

\begin{proof}
By hypothesis $U_f$ and $U_g$ are $\Gamma\mathrm{L}(2t',q^t)$-equivalent and $U_f$ is a scattered $\fq$-subspace of $V(2t',q^t)$, so that also $U_g$ is a scattered $\fq$-subspace of $V(2t',q^t)$ and hence $g(x)$ is R-$q^t$-partially scattered because of Theorem \ref{th:scatteredbig}.
\end{proof}

The two notions of equivalence do not coincide.
To show this, we make use of the following result.

\begin{theorem}\label{th:pseudo2}
Let $n=t t'$, for some $t,t' \in \mathbb{N}$ and let $s,s'\in \mathbb{N}$ be such that $\gcd(s,t)=\gcd(s',t)=1$. Let
$f(x)=\sum_{i=0}^{t'-1} a_i x^{q^{it+s}}$ and $g(x)=\sum_{i=0}^{t'-1} b_i x^{q^{it+s'}}$ two R-$q^t$-partially scattered polynomials.
Then $f(x)$ and $g(x)$ are weakly equivalent if and only if $s \equiv \pm s' \pmod{t}$.
\end{theorem}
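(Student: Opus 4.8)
The plan is to translate the statement into the geometric language of Section \ref{sec:pseudo} and then to read off the exponent $s \bmod t$ as an invariant of the associated linear set. By Proposition \ref{prop:family} both $f$ and $g$ are invertible, so by Theorem \ref{th:pseudo} and the discussion of Section \ref{sec:pseudo} the sets $L_f=\{\langle(x,f(x))\rangle_{\F_{q^t}}:x\in\fqn^*\}$ and $L_g$ are $\fq$-linear sets of pseudoregulus type in $\Lambda=\PG(2t'-1,q^t)$. Since $L_f=L_{U_f}$ and $L_g=L_{U_g}$, any relation $U_f^{\varphi}=U_g$ with $\varphi\in\G(2t',q^t)$ projectivizes to a collineation $\bar\varphi\in\mathrm{P\Gamma L}(2t',q^t)$ with $L_f^{\bar\varphi}=L_g$, and conversely. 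Hence $f$ and $g$ are weakly equivalent if and only if $L_f$ and $L_g$ are $\mathrm{P\Gamma L}(2t',q^t)$-equivalent, and the whole problem becomes one about linear sets of pseudoregulus type.

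For the \emph{only if} direction I would show that weak equivalence forces $s\equiv\pm s'\pmod t$. The key point, and the hard part, is that the companion automorphism is an intrinsic invariant of the linear set. By the definition of pseudoregulus type, the pseudoregulus $\mathcal{P}_{L_f}$ (formed by the weight-$t$ lines) and the unordered pair of transversal spaces $\{T_1,T_2\}$ are uniquely determined by $L_f$; since weight is preserved by collineations, any $\bar\varphi$ with $L_f^{\bar\varphi}=L_g$ maps $\mathcal{P}_{L_f}$ onto $\mathcal{P}_{L_g}$ and $\{T_1,T_2\}$ onto the pair of transversal spaces of $L_g$. By Theorem \ref{th:pseudo}, $L_f$ is realized by a strictly $\F_{q^t}$-semilinear collineation $\phi_f\colon T_1\to T_2$ with companion automorphism $\sigma\colon x\mapsto x^{q^s}$; replacing $\phi_f$ by $\phi_f^{-1}\colon T_2\to T_1$ replaces $\sigma$ by $\sigma^{-1}\colon x\mapsto x^{q^{-s}}$, so the companion automorphism is intrinsically defined only up to inversion, i.e. as the class $\{s,-s\}\bmod t$. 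If $\bar\varphi$ has companion automorphism $\tau$, the collineation realizing $L_g$ on the image transversal spaces is a conjugate of $\phi_f$ by $\bar\varphi$, whose companion automorphism is $\tau^{-1}\sigma\tau=\sigma$ because $\mathrm{Aut}(\F_{q^t})$ is abelian, while a possible swap of the two transversal spaces yields $\sigma^{-1}$. In every case the companion class of $L_g$ coincides with that of $L_f$, i.e. $\{s',-s'\}\equiv\{s,-s\}\bmod t$, which is exactly $s\equiv\pm s'\pmod t$. The delicate issue here is to pin down rigorously that $\{T_1,T_2\}$ is canonically attached to the linear set and that the companion class survives an \emph{arbitrary} element of $\mathrm{P\Gamma L}(2t',q^t)$; once this is granted, commutativity of $\mathrm{Aut}(\F_{q^t})$ makes the conjugation harmless.

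For the \emph{if} direction I would exhibit explicit maps in $\G(2t',q^t)$. Set $g_{\mathbf{a}}(x)=\sum_{i=0}^{t'-1}a_ix^{q^{it}}$ and $g_{\mathbf{b}}(x)=\sum_{i=0}^{t'-1}b_ix^{q^{it}}$, which are invertible $\F_{q^t}$-linearized maps of $\fqn=V(t',q^t)$ by Proposition \ref{prop:family}. If $s\equiv s'\pmod t$, then using $x^{q^{m}}=x^{q^{m\bmod n}}$ on $\fqn$ one rewrites $g$ in the form \eqref{eq:form} with the same exponent $s$ as $f$ (a cyclic relabelling of the coefficients $b_i$); the $\F_{q^t}$-linear map $(x,y)\mapsto(x,(g_{\mathbf{b}}\circ g_{\mathbf{a}}^{-1})(y))$ then lies in $\mathrm{GL}(2t',q^t)$ and sends $U_f$ onto $U_g$, giving weak equivalence. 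If instead $s\equiv -s'\pmod t$, I would first apply the swap $(x,y)\mapsto(y,x)$, which belongs to $\mathrm{GL}(2,q^n)\le\G(2t',q^t)$ and sends $U_f$ to $U_{f^{-1}}$; writing $f=g_{\mathbf{a}}\circ(x\mapsto x^{q^s})$, a direct computation shows that $f^{-1}$ is again of the form \eqref{eq:form} with exponent $\equiv -s\equiv s'\pmod t$, and it is R-$q^t$-partially scattered because the swap preserves scattered $\fq$-subspaces (Theorem \ref{th:scatteredbig}). Applying the same-exponent case to $f^{-1}$ and $g$ and composing the two maps completes the proof.
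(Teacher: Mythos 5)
Your ``if'' direction is correct and is, in substance, the paper's own argument: the paper exhibits the explicit maps $F(x,y)=(x,g(f^{-1}(y)))$ when $s\equiv s'\pmod t$ and $F(x,y)=(f^{-1}(y),g(x))$ when $s\equiv -s'\pmod t$, and your maps coincide with these up to composing with the swap (note that $g\circ f^{-1}$, resp. $g\circ f$, is $\F_{q^t}$-linear exactly because $s'-s\equiv 0$, resp. $s'+s\equiv 0$, modulo $t$; your version even lands in $\mathrm{GL}(2t',q^t)$ rather than merely $\mathrm{\Gamma L}(2t',q^t)$). One small caveat in your setup: the claim that weak equivalence of $f$ and $g$ is equivalent to $\mathrm{P\Gamma L}(2t',q^t)$-equivalence of $L_{U_f}$ and $L_{U_g}$ is overstated, since distinct $\fq$-subspaces can define the same linear set, so projective equivalence of the linear sets does not a priori lift to $\mathrm{\Gamma L}(2t',q^t)$-equivalence of the subspaces; however, you only ever use the direction that does hold (subspace equivalence implies linear-set equivalence), so this is harmless.

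The genuine gap is in the ``only if'' direction. You assert that ``by the definition of pseudoregulus type'' the pseudoregulus $\mathcal{P}_{L_f}$ and the unordered pair of transversal spaces $\{T_1,T_2\}$ are uniquely determined by the point set $L_f$, and from this you deduce that the companion automorphism is an invariant of $L_f$ up to inversion. Neither claim is a definition: the definition in Section \ref{sec:pseudo} only asserts the \emph{existence} of a pseudoregulus and of exactly two transversal spaces relative to it. Canonicity is a nontrivial theorem --- precisely the content of \cite[Theorem 3.7]{LMPT:14}, which is what the paper's proof invokes at this exact step --- and it is in fact \emph{false} for $t=2$: there the linear sets of pseudoregulus type are Baer subgeometries, which admit many lines of weight $t$ and many distinct pseudoreguli with distinct transversal pairs, so your argument collapses (the statement of the theorem survives for $t=2$ only because $\gcd(s,2)=\gcd(s',2)=1$ forces $s\equiv\pm s'\pmod 2$ vacuously). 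Moreover, even granting canonicity of $\{T_1,T_2\}$ for $t>2$, your observation that replacing $\phi_f$ by $\phi_f^{-1}$ replaces $\sigma$ by $\sigma^{-1}$ only shows that $\sigma$ and $\sigma^{-1}$ both occur as companions of \emph{some} realization; it does not exclude a third realization of the same linear set, with the same transversals, whose companion automorphism is neither. This well-definedness of the companion class is again part of \cite[Theorem 3.7]{LMPT:14}. You explicitly flag all of this as ``the delicate issue'' and then proceed ``once this is granted'' --- but what is being granted is exactly the theorem to be proved, so the crux of the only-if direction is missing. The repair is either to cite \cite[Theorem 3.7]{LMPT:14} as the paper does, or to prove canonicity of the pseudoregulus and transversal pair for $t>2$ and dispose of $t=2$ separately; your conjugation computation $\tau\sigma\tau^{-1}=\sigma$ (valid since $\mathrm{Aut}(\F_{q^t})$ is abelian) is fine once that foundation is in place.
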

\begin{proof}
If $f(x)$ and $g(x)$ are weakly equivalent then the $\fq$-linear sets $L_{U_f}$ and $L_{U_g}$ are $\mathrm{P\Gamma L}(t',q^t)$-equivalent.
By Theorem \ref{th:pseudo},  $L_{U_f}$ and $L_{U_g}$ are of pseudoregulus type and \cite[Theorem 3.7]{LMPT:14} implies that the companion automorphisms $\sigma_f$ and $\sigma_g$ of $f(x)$ and $g(x)$, respectively, are such that $\sigma_f=\sigma_g^{\pm 1}$, that is $s \equiv \pm s' \pmod{t}$.
Assume now that $s \equiv s' \pmod{t}$.
Consider the $\F_{q^t}$-linear map $F$ of $\F_{q^n}\times\F_{q^n}$ defined by
\[ F(x,y)=(x,g(f^{-1}(y))), \]
for every $x,y \in \fqn$.
Clearly, $F(U_f)=U_g$.
If $s \equiv -s' \pmod{t}$, then consider the $\F_{q^t}$-semilinear map $F$ of $\F_{q^n}\times\F_{q^n}$ defined by
\[ F(x,y)=(f^{-1}(y),g(x)), \]
for every $x,y \in \fqn$.
Again, $F(U_f)=U_g$.
\end{proof}

By Proposition \ref{prop:equivbin}, we know that there exist at least two non-equivalent binomials in the family of Proposition \ref{prop:family} when $t=3$ and $t'=2$.
%So, we have the next result.

\begin{corollary}\label{cor:different}
Let $t=3$ and $t'=2$ and let $f(x)$ and $g(x)$ be two R-$q^t$-partially scattered non-equivalent binomials belonging to  family of Proposition \ref{prop:family}. Then $f(x)$ and $g(x)$ are weakly equivalent.
\end{corollary}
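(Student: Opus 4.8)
The plan is to reduce the statement to a direct application of Theorem \ref{th:pseudo2}, whose hypotheses are tailored precisely to binomials of the shape appearing in Proposition \ref{prop:family}. First I would write the two binomials explicitly. Since $t=3$ and $t'=2$, every R-$q^3$-partially scattered binomial in the family of Proposition \ref{prop:family} has the form $a_0 x^{q^s}+a_1 x^{q^{3+s}}$ with $\gcd(s,3)=1$; thus $f(x)=a_0 x^{q^s}+a_1 x^{q^{3+s}}$ and $g(x)=b_0 x^{q^{s'}}+b_1 x^{q^{3+s'}}$ for suitable $s,s'$ coprime to $3$.

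The crucial observation is purely arithmetic: since $\gcd(s,3)=\gcd(s',3)=1$, the residues of $s$ and $s'$ modulo $3$ both lie in $\{1,2\}$. As $1\equiv -2 \pmod{3}$ and $2\equiv -1 \pmod{3}$, the group of units modulo $3$ equals $\{1,-1\}$, so \emph{any} two such residues automatically satisfy $s\equiv \pm s' \pmod{3}$. Hence the equivalence criterion ``$s\equiv\pm s'\pmod{t}$'' of Theorem \ref{th:pseudo2}, specialized to $t=3$, is fulfilled regardless of the specific values of $s$ and $s'$.

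Applying Theorem \ref{th:pseudo2} then yields at once that $f(x)$ and $g(x)$ are weakly equivalent, which is the claim. The existence of a genuine (non-equivalent) pair to which the corollary applies is supplied by Proposition \ref{prop:equivbin}, so the statement is not vacuous.

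I expect essentially no obstacle here: the only point requiring care is to confirm that the two binomials genuinely fall under the hypotheses of Theorem \ref{th:pseudo2}, namely that they are R-$q^3$-partially scattered polynomials of the prescribed form with exponents coprime to $t$, which is guaranteed by the statement. It is worth emphasizing in the write-up that the hypothesis of non-equivalence plays no role in establishing weak equivalence itself; it serves only to make the conclusion meaningful, exhibiting a pair that is weakly equivalent but not equivalent and thereby confirming that the two notions of equivalence introduced in Sections \ref{sec:equiv} and \ref{sec:weaker} do not coincide.
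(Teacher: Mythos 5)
Your proposal is correct and follows essentially the same route as the paper: both reduce the statement to Theorem \ref{th:pseudo2}, the paper routing through Theorem \ref{th:pseudo} to note the linear sets are of pseudoregulus type and then invoking the weak-equivalence criterion. The only difference is that you spell out the arithmetic point the paper leaves implicit, namely that for $t=3$ every pair of exponents coprime to $3$ automatically satisfies $s\equiv\pm s'\pmod{3}$, which is a welcome clarification rather than a deviation.
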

\begin{proof}
Consider the $\fq$-linear sets $L_{U_f}$ and $L_{U_g}$ of $\mathrm{PG}(3,q^3)$. By Theorem \ref{th:pseudo},  $L_{U_f}$ and $L_{U_g}$ are of pseudoregulus type. Thus $f(x)$ and $g(x)$ are weakly equivalent by Theorem \ref{th:pseudo2}.
\end{proof}

Corollary \ref{cor:different} shows that the equivalence defined in Section \ref{sec:equiv} and the weak equivalence defined in this section are different.
Moreover, using Theorem \ref{th:pseudo2}, we can determine the number of weakly inequivalent R-$q^t$-partially scattered polynomials of the form \eqref{eq:form}.

\begin{corollary}
The number of weakly inequivalent R-$q^t$-partially scattered polynomials of the form \eqref{eq:form} is $\varphi(t)/2$, where $\varphi$ is the Euler totient function.
\end{corollary}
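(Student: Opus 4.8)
The plan is to reduce the count to an orbit-counting problem on residues modulo $t$, using Theorem \ref{th:pseudo2} as the key input. First I would observe that in \eqref{eq:form} the parameter $s$ matters only modulo $t$: replacing $s$ by $s+t$ shifts each exponent $it+s$ to $(i+1)t+s$, and since $x^{q^n}=x$ over $\fqn$ the top term $x^{q^{t't+s}}$ becomes $x^{q^s}$; hence the resulting polynomial is again of the form \eqref{eq:form} with parameter $s$ and cyclically permuted coefficients. Thus every admissible value of $s$ corresponds to a residue in $(\mathbb{Z}/t\mathbb{Z})^*$, the condition $\gcd(s,t)=1$ being exactly membership in this group. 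I would then check that each such residue is genuinely realised by some R-$q^t$-partially scattered polynomial of the form \eqref{eq:form}: this is immediate from Proposition \ref{prop:family}, since for any $s$ with $\gcd(s,t)=1$ the monomial $f(x)=x^{q^s}$ corresponds to the invertible map $g_{\mathbf{a}}(x)=x$ and is therefore R-$q^t$-partially scattered. Consequently the set of realised residues is all of $(\mathbb{Z}/t\mathbb{Z})^*$.

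The core step is then to invoke Theorem \ref{th:pseudo2}: two R-$q^t$-partially scattered polynomials of the form \eqref{eq:form} with parameters $s$ and $s'$ are weakly equivalent if and only if $s\equiv\pm s'\pmod{t}$. Taking $s=s'$ shows in particular that any two such polynomials sharing the same $s$ are weakly equivalent, so each admissible residue contributes a single class, and a weak equivalence class is detected precisely by the unordered pair $\{s,-s\}$ modulo $t$. Therefore the number of weak equivalence classes equals the number of orbits of $(\mathbb{Z}/t\mathbb{Z})^*$ under the involution $\iota\colon s\mapsto -s$.

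Finally I would count these orbits. A fixed point of $\iota$ would satisfy $2s\equiv0\pmod{t}$ with $\gcd(s,t)=1$, forcing $t\mid 2$; hence for $t>2$ the involution $\iota$ is fixed-point-free, and the $\varphi(t)$ elements of $(\mathbb{Z}/t\mathbb{Z})^*$ split into $\varphi(t)/2$ pairs $\{s,-s\}$. This gives exactly $\varphi(t)/2$ weak equivalence classes, as claimed.

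There is essentially no deep technical obstacle here once Theorem \ref{th:pseudo2} is granted, as it already encodes the nontrivial geometric classification of these linear sets of pseudoregulus type. The only points that require care are the reduction of $s$ to a residue modulo $t$ (so that the parameter lives in $(\mathbb{Z}/t\mathbb{Z})^*$ rather than in $\mathbb{Z}$) and the verification that $\iota$ is fixed-point-free, which is what guarantees that the $\pm$ ambiguity genuinely halves the count and produces the clean value $\varphi(t)/2$.
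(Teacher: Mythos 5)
Your proposal is correct and follows exactly the route the paper intends: the corollary is stated without proof immediately after Theorem \ref{th:pseudo2}, and the intended argument is precisely your reduction of the parameter $s$ to a residue in $(\mathbb{Z}/t\mathbb{Z})^*$ followed by counting orbits under $s\mapsto -s$. If anything, you are more careful than the paper, since your observation that the involution is fixed-point-free only for $t>2$ flags the degenerate cases $t\le 2$, where the formula $\varphi(t)/2$ would need to be read as a single class.
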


\begin{comment}
\begin{proposition}\label{prop:notweakequiv}
Let $f(x)=\sum_{i=0}^{t'-1} a_i x^{q^{it+s}}$ and $g(x)=\sum_{i=0}^{t'-1} b_i x^{q^{it+s'}}$ as in \eqref{eq:form} with $\gcd(s,t)=\gcd(s',t)=1$.
If $s \not\equiv s' \pm \pmod{n}$, then $f(x)$ and $g(x)$ are not weakly equivalent.
\end{proposition}
\begin{proof}
Consider $L_{U_f}$ and $L_{U_g}$ two $\fq$-linear sets in $\mathrm{PG}(t'-1,q^t)$.
As already pointed out in Section \ref{sec:pseudo}, $L_{U_f}$ and $L_{U_g}$ are of pseudoregulus type and by \cite[Theorem 3.7]{LMPT:14} they are $\mathrm{P}\Gamma\mathrm{L}(t',q^t)$-equivalent if and only if either $s\equiv s'\pmod{n}$ or $s\equiv -s' \pmod{n}$.
Hence, by hypothesis $L_{U_f}$ and $L_{U_g}$ are not $\mathrm{P}\Gamma\mathrm{L}(t',q^t)$-equivalent, which implies that $U_f$ and $U_g$ are not $\Gamma\mathrm{L}(t',q^t)$-equivalent, that is $f(x)$ and $g(x)$ are not weakly equivalent.
\end{proof}
\end{comment}

%\textcolor{blue}{OPEN PROBLEM? maybe, non equivalent polynomials of pseudoregulus type are weakly equivalent depending only on $s$, for every $t,t^\prime$ (Ferdinando dice: fundamentalment', si tratta di provare a dimostrare che gli pseudoregoli sono insiemi lineari semplici!)}

\section{Open problems}

We conclude the papers by pointing out some open problems.

\begin{itemize}
    \item In Proposition \ref{prop:LP} we characterize LP-polynomials which are L-$q^t$-partially scattered or R-$q^t$-partially scattered when $n$ is odd. The techniques developed in \cite{Zanella} may be useful to extend this characterization when $n$ is even.
    \item It would be interesting to find non-monomial exceptional L-$q^t$-partially scattered polynomials $f(x)$ which are not exceptional scattered.
    As already noted in Section \ref{sec:exceptionality}, this may be done through the investigation of the structure of the Galois group of $f(x)-sx^{q^\ell}$.
    \item Regarding the family of polynomials introduced in Section \ref{sec:example}, the weak equivalence issue has been completely solved in Section \ref{sec:weaker}, whereas less is known when considering the equivalence defined in Section \ref{sec:equiv}. Therefore it would be of interest to determine the equivalence classes of the family in Section \ref{sec:example} under the latter equivalence.
    \item Proposition \ref{prop:LZ} provides a practical machinery to construct families of R-$q^t$-partially scattered polynomials. Indeed, we use such a  proposition to construct the family in Section \ref{sec:example} by means of the R-$q^t$-partially scattered polynomials of monomial type. Other examples of R-$q^t$-partially scattered polynomials could arise from non-monomial ones.
\end{itemize}

\section*{Acknowledgments}

The authors are very grateful to Corrado Zanella for a careful reading of the paper and valuable suggestions.
This research  was supported by the Italian National Group for Algebraic and Geometric Structures and their Applications (GNSAGA - INdAM).
The second author is funded by the project ``Attrazione e Mobilità dei
Ricercatori'' Italian PON Programme (PON-AIM 2018 num. AIM1878214-2).
The second and the third authors are supported by the project ``VALERE: VAnviteLli pEr la RicErca" of the University of Campania ``Luigi Vanvitelli''.

\end{document}